\title[Products in Fusion Systems]{
Products in Fusion Systems}
\author[E.~Henke]{Ellen Henke}
\address{Department of Mathematical Sciences, 
University of Copenhagen, Universitetsparken 5, 
DK--2100 Copenhagen, Denmark}
\thanks{The author was supported by the Danish National Research
  Foundation (DNRF) through the Centre for Symmetry and
  Deformation.}
\begin{document}

\newtheorem{notation}{Notation}[section]
\newtheorem{definition}[notation]{Definition}
\newtheorem{satz}[notation]{}
\newtheorem{lemma}[notation]{Lemma}
\newtheorem{corollary}[notation]{Corollary}
\newtheorem{theorem}[notation]{Theorem}
\newtheorem{proposition}[notation]{Proposition}
\newtheorem{hyp}[notation]{Hypothesis}
\newtheorem{remark}[notation]{Remark}
\newtheorem{mdefinition}{Definition}
\newtheorem{mtheorem}{Theorem}
\newtheorem{mnotation}{Notation}
\newtheorem{mprop}{Proposition}
\newtheorem{mhyp}{Hypothesis}
\numberwithin{equation}{notation}

\theoremstyle{definition}
\newtheorem{ex}[notation]{Example}

\def \<{\langle }
\def \>{\rangle }
\newcommand{\sy}{\textrm{Syl}}
\newcommand{\ov}{\overline}
\newcommand{\m}{\mathcal}
\newcommand{\wt}{\widetilde}
\newcommand{\wh}{\widehat}
\newcommand{\F}{\mathcal{F}}
\newcommand{\G}{\mathcal{G}}
\newcommand{\ch}{\;char_{\mathcal{F}}\;}
\newcommand{\Ac}{\operatorname{A}^\circ}
\newcommand{\A}{\operatorname{A}}
\newcommand{\Vc}{V^\circ}
\newcommand{\FN}{\F_\m{N}}
\newcommand{\FNT}{\F_\m{N}^*}
\newcommand{\Q}{\mathcal{Q}}
\newcommand{\subn}{\unlhd\unlhd\;}
\newcommand{\vphi}{\varphi}
\newcommand{\Hom}{\operatorname{Hom}}
\newcommand{\Aut}{\operatorname{Aut}}
\newcommand{\Inn}{\operatorname{Inn}}
\newcommand{\Iso}{\operatorname{Iso}}
\newcommand{\Syl}{\operatorname{Syl}}
\newcommand{\id}{\operatorname{Id}}
\newcommand{\N}{\operatorname{N}}
\newcommand{\C}{\operatorname{C}}
\newcommand{\mynote}[1]{{\bf [ {#1}]}}

\begin{abstract}
We revisit the notion of a product of a normal subsystem with a $p$-subgroup as defined by Aschbacher \cite[Chapter 8]{AschbacherGeneralized}. In particular, we give a previously unknown, more transparent construction. 
\end{abstract}

\maketitle

\section{Introduction}

\textit{Saturated fusion systems} are categories mimicking important properties of fusion in finite groups. They were (under a different name) first defined and studied by Puig in the early 1990's, mostly for the purposes of block theory; see \cite{Puig2006} and \cite{PuigBookFrobenius}. Later, Broto, Levi and Oliver introduced in \cite{BrotoLeviOliver2003} the now standard notation and terminology. They also extended Puig's theory for the study of classifying spaces of finite groups.

\smallskip

From the very beginning, translating group theoretical concepts into the framework of fusion systems played a vital role in developing the theory from an algebraic point of view. Already Puig has introduced normalizers and centralizers of $p$-subgroups in fusion systems, normal and central subgroups, factor systems, and a notion of normal subsystems. More recently, in two fundamental papers \cite{AschbacherNormal,AschbacherGeneralized}, Aschbacher has built up an increasingly rich theory. His main motivation was to provide a framework in which portions of the classification of finite simple groups can be carried out in the category of fusion systems, hopefully leading to a simpler proof. 

\smallskip

Even though concepts borrowed from finite group theory became fundamental for the understanding of fusion systems, many constructions which are elementary in groups are difficult or perhaps even impossible in fusion systems. For example, if $N$ is a normal subgroup of a group $G$ then, for any subgroup $H$ of $G$, the product $NH$ is trivially again a subgroup of $G$. If we, in contrast, consider a saturated fusion system, products of normal subsystems with other saturated subsystems are so far only constructed in very special cases. Aschbacher \cite[Thm.~3]{AschbacherGeneralized} has proved the existence of a product of two normal subsystems provided their underlying $p$-groups commute. Moreover, he has defined a product of a normal subsystem with a $p$-subgroup; see Theorem~5 and Chapter~8 in \cite{AschbacherGeneralized}. In this paper we aim to review the latter concept. The reason is firstly that, even though Aschbacher's proof is constructive, the explicit description of the product system is quite complicated, so we would like to give an easier construction. Secondly, we seek to simplify parts of the arguments in the proof of \cite[Thm.~5]{AschbacherGeneralized} and to give a more transparent proof. Our proof, like Aschbacher's, uses the existence of models for constrained fusion systems as proved in \cite{BCGLO2005}, and thus relies indirectly on the vanishing of certain higher limits of functors; see also \cite[Section~III.5.2]{Aschbacher/Kessar/Oliver:2011a}. Apart from that our proof is elementary and essentially self-contained. In particular, we avoid the counting argument in \cite[8.1]{AschbacherGeneralized} which relies on the existence of a certain $(S,S)$-biset from  \cite[Prop.~5.5]{BrotoLeviOliver2003} via \cite[Prop.~1]{BCGLO2007}.
This simplification is mainly achieved by exploiting the existence of \textit{well-placed} subgroups which we define in \ref{WellPlacedDef}. However, part of our proof still follows Aschbacher's work.

\smallskip

For the remainder of this paper, we assume the following hypothesis:

\begin{mhyp}\label{MainHyp}
Throughout, $p$ is a prime and $\F$ is a saturated fusion system on a finite $p$-group $S$.  Let $\F_0$ be a normal subsystem of $\F$ on a subgroup $S_0$ of $S$. Let $T$ be a subgroup of $S$ containing $S_0$.
\end{mhyp}

We refer the reader to \cite{Aschbacher/Kessar/Oliver:2011a} for the main definitions regarding saturated fusion systems and normal subsystems. Next we will construct the product $\F_0T$, which we sometimes also denote by $(\F_0T)_\F$ to stress that we form the product inside the given fusion system $\F$. Note that the following definition trivially leads to a notion of the product of $\F_0$ with an arbitrary subgroup $R$ of $S$ just by setting $\F_0R:=\F_0(S_0R)$.

\begin{mdefinition}
For a subgroup $P\leq S$ set 
$$\Ac(P):=\Ac_{\F,\F_0}(P):=\<\vphi\in \Aut_\F(P):\vphi\mbox{ $p^\prime$-element, }[P,\vphi]\leq P\cap S_0\mbox{ and }\vphi|_{P\cap S_0}\in \Aut_{\F_0}(P\cap S_0)\>.$$
The \textbf{product} of $\F_0$ with $T$ in $\F$ is the fusion system 
$$\F_0T:=(\F_0T)_\F:=\<\Ac(P):P\leq T\mbox{ and }P\cap S_0\in \F_0^c\>_T.$$
\end{mdefinition}

Here, for any set $\m{H}$ consisting of $\F$-morphisms between subgroups of $T$, we write $\<\m{H}\>_T$ for the smallest subsystem of $\F$ on $T$ containing every element of $\m{H}$.

\smallskip

In the definition above, it might at first seem artificial to restrict attention to the subgroups $P$ of $T$ with $P\cap S_0\in\F_0^c$. However, this is indeed essential. We prove in \ref{Ac} that  $\Ac(P)=O^p(\Aut_{\F_0T}(P))$ for any $P\leq T$ with $P\cap S_0\in \F_0^c$. In contrast, for an arbitrary subgroup $P$ of $T$, $\Ac(P)$ does not need to be contained in $\Aut_{\F_0T}(P)$ as we show in Example~\ref{7.5}. Thus, it seems that there is no easy way of describing $\Aut_{\F_0T}(P)$.  Nevertheless, according to the theorem we state next, the subsystem $\F_0T$ is in fact the only saturated subsystem of $\F$ which can sensibly play the role of a product of $\F_0$ with $T$.

\begin{mtheorem}\label{MainThm}
The fusion system $\F_0T$ is a saturated subsystem of $\F$ on $T$. Furthermore, $\F_0T$ is the unique saturated subsystem $\m{E}$ of $\F$ on $T$ with $O^p(\m{E})=O^p(\F_0)$. 
\end{mtheorem}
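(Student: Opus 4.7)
The plan is to establish saturation first and then derive uniqueness from an $O^p$--characterisation of saturated subsystems of $\F$ sitting on $T$.

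\smallskip

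\textbf{Saturation.} The definition of $\F_0T$ generates it from the groups $\Ac(P)$ with $P\cap S_0\in\F_0^c$, and by the forward reference \ref{Ac} these are precisely $O^p(\Aut_{\F_0T}(P))$ once we know the system is saturated. So the generators are the canonical $p'$--parts of the sought automorphism groups; what is needed is to check the Sylow and extension axioms at a sufficient family of subgroups. The plan is:
(1) show that every $\F_0T$--conjugacy class contains a \emph{well-placed} representative in the sense of \ref{WellPlacedDef}, and that such a representative can be chosen fully $\F_0T$--normalised; this is the combinatorial core of the argument and is exactly where the counting/biset input of \cite[8.1]{AschbacherGeneralized} is bypassed;
(2) for each well-placed $P$ with $P\cap S_0\in\F_0^c$, work inside a model $M$ of a constrained subsystem of $\F_0$ (such as $N_{\F_0}(P\cap S_0)$, whose existence comes from \cite{BCGLO2005}) and form the group-theoretic product of $M$ with a suitable subgroup of $N_T(P)$; the automorphisms induced by this ordinary group product provide on the nose both the $p'$--generators $\Ac(P)$ and the $\Aut_T(P)$--action;
(3) with this concrete description read off from a finite group, verify that $\Aut_T(P)$ is Sylow in $\Aut_{\F_0T}(P)$ and that morphisms extend over the full normaliser, thereby establishing saturation at every well-placed subgroup, hence at every subgroup by (1).

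\smallskip

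\textbf{Uniqueness.} Suppose $\m{E}$ is a saturated subsystem of $\F$ on $T$ with $O^p(\m{E})=O^p(\F_0)$. First, from $O^p(\F_0)\subseteq\m{E}$ and $\F_0\trianglelefteq\F$ one deduces $\F_0\subseteq\m{E}$: every $p'$--automorphism of a subgroup of $S_0$ in $\F_0$ lies in $\m{E}$, and the $p$--parts that are missing from $O^p(\F_0)$ are supplied by $\Aut_T(P)\cap\Aut_{\F_0}(P)$ inside the saturated system $\m{E}$, where they are available because $\m{E}$ is assumed saturated on $T\supseteq S_0$. Next, for any $P\leq T$ with $P\cap S_0\in\F_0^c$, a generator $\vphi$ of $\Ac(P)$ is a $p'$--element of $\Aut_\F(P)$ with $[P,\vphi]\leq P\cap S_0$ and $\vphi|_{P\cap S_0}\in\Aut_{\F_0}(P\cap S_0)$; since $\vphi$ acts trivially on $P/(P\cap S_0)$, it lies in the hyperfocal subsystem on $T$, and by $O^p(\m{E})=O^p(\F_0)$ such an element must lie in $\Aut_\m{E}(P)$. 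Hence $\F_0T\subseteq\m{E}$. For the reverse inclusion, $\F_0T$ and $\m{E}$ are both saturated systems on $T$ satisfying $O^p(\F_0T)=O^p(\F_0)=O^p(\m{E})$; since both sit inside $\F$ on the same underlying group $T$, the $p$--part of each $\Aut(P)$ is forced by the subgroup $\Aut_T(P)\leq\Aut_\F(P)$, which is common to both systems, so $\m{E}=\F_0T$.

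\smallskip

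The main obstacle in this plan is step~(1) of the saturation argument: proving that well-placed subgroups are abundant enough to control saturation while simple enough to be analysed via models. Every other step is either a direct application of the model machinery or a routine consequence of the $O^p$--identity \ref{Ac}; it is the well-placed reduction that carries the weight of the new, more transparent construction.
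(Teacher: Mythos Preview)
Your saturation outline is roughly aligned with the paper's strategy, but two points are confused. First, well-placed subgroups are by definition subgroups of $S_0$ (see \ref{WellPlacedDef}), not arbitrary $P\leq T$; the paper uses them to control the intersections $P_0=P\cap S_0$ and then lifts information to $P$ via the systems $\m{N}(P_0,\alpha)$. Your step~(3) speaks of ``establishing saturation at every well-placed subgroup, hence at every subgroup'', which conflates subgroups of $S_0$ with subgroups of $T$. Second, a model for $N_{\F_0}(P_0)$ alone does not carry the $T$-action; the paper takes a model $G(U_0)$ for the larger constrained $\F$-subsystem $\m{D}(U_0)=N_{N_\F(U_0C_S(U_0))}(U_0)$ with $N(U_0)\unlhd G(U_0)$, and it is the group product $N(U_0)\cdot(\N_T(U_0)\alpha)$ inside $G(U_0)$ that realises the desired automorphism groups. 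These are repairable, but the sketch as written would not go through.

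The genuine gap is in the uniqueness argument. You assert that a $p'$-element $\vphi\in\Ac(P)$ ``lies in the hyperfocal subsystem on $T$, and by $O^p(\m{E})=O^p(\F_0)$ such an element must lie in $\Aut_{\m{E}}(P)$''. This is the whole content of the uniqueness and it is not justified: knowing $\vphi|_{P_0}\in\Aut_{\F_0}(P_0)$ and $[P,\vphi]\leq P_0$ does \emph{not} force $\vphi\in\Aut_{\m{E}}(P)$, because $\vphi$ is not determined by its restriction to $P_0$. Example~\ref{7.5} exhibits exactly this failure (there for $P_0\notin\F_0^c$), and Example~\ref{7.4} shows two saturated systems on the same $T$ with the same $O^p$ that nevertheless differ. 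Your final ``reverse inclusion'' paragraph is likewise empty: sharing $\Aut_T(P)$ and $O^p$ does not pin down the $p'$-part of $\Aut(P)$, and you never use the ambient system $\F$ concretely.

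The paper's argument (Lemma~\ref{OPE}) goes the other way. First one shows $\m{E}\subseteq\F_0T$ via Alperin: for $P\in\m{E}^{frc}$ one checks directly that $O^p(\Aut_{\m{E}}(P))\leq\Ac(P)$. For the harder inclusion $\Ac(Q)\leq\Aut_{\m{E}}(Q)$ (with $Q\in\m{D}^{fc}$, $Q_0\in\F_0^c$), one uses that $\vphi|_{Q_0}$ is a morphism in $\F_0\subseteq\m{E}$ and \emph{extends it inside the saturated system} $\m{E}$ to some $\psi\in O^p(\Aut_{\m{E}}(Q))$; then $\vphi\psi^{-1}$ centralises $Q_0$ and acts trivially on $Q/Q_0$, hence is a $p$-element of $\Ac(Q)$. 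The punchline is that $O_p(\Ac(Q))\leq\Aut_T(Q)$ because $\m{D}=\F_0T$ is \emph{already known to be saturated} and $Q\in\m{D}^f$, so $\Aut_T(Q)\in\Syl_p(\Aut_{\m{D}}(Q))$. Thus the saturation of $\F_0T$ is used in an essential way to prove uniqueness; it is not a separate afterthought.
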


The above theorem is essentially \cite[Thm.~5]{AschbacherGeneralized} except for the concrete description of $\F_0T$. The uniqueness implies in particular that our subsystem $\F_0T$ coincides with the subsystem $\F_0T$ defined by Aschbacher. For the uniqueness statement it is actually important to form the product ``internally'', i.e. inside of a fixed fusion system $\F$; see Example~\ref{7.4}.

\smallskip

If $G$ is a finite group, $S\in\Syl_p(G)$ and $N\unlhd G$, then by \cite[Prop.~I.6.2]{Aschbacher/Kessar/Oliver:2011a}, $\F_{S\cap N}(N)$ is a normal subsystem of $\F_S(G)$. As stated in the next proposition, the fusion system product coincides, in the group case, with the fusion system of the usual product of subgroups.

\begin{mprop}\label{GroupProp}
Suppose $\F=\F_S(G)$ for some finite group $G$ with $S\in \Syl_p(G)$, and there exists a normal subgroup $N$ of $G$ such that $S_0=S\cap N$ and $\F_0=\F_{S_0}(N)$.  Then $\F_0T=\F_T(NT)$.
\end{mprop}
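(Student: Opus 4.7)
The plan is to apply the uniqueness part of Theorem~\ref{MainThm}: since $\F_0T$ is characterised as the unique saturated subsystem of $\F$ on $T$ with $O^p$-part equal to $O^p(\F_0)$, it will suffice to verify that $\F_T(NT)$ is a saturated subsystem of $\F$ on $T$ and that $O^p(\F_T(NT))=O^p(\F_0)$.

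First I would check that $\F_T(NT)$ is a saturated subsystem of $\F$ on $T$. The only non-trivial point is that $T\in\Syl_p(NT)$. Since $T\leq S$, one has $N\cap T\leq N\cap S=S_0$, and the hypothesis $S_0\leq T$ forces $N\cap T=S_0$. Because $S_0\in\Syl_p(N)$, a short order computation shows $|NT|_p=|N|_p|T|/|S_0|=|T|$, so $T\in\Syl_p(NT)$. Saturation of $\F_T(NT)$ then follows from the standard fact that group fusion systems on Sylow $p$-subgroups are saturated, and containment in $\F=\F_S(G)$ is immediate from $NT\leq G$ and $T\leq S$.

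The heart of the argument is the identification $O^p(\F_T(NT))=O^p(\F_0)$. I would first show $O^p(NT)=O^p(N)$ as subgroups of $G$. Indeed, $NT/N\cong T/S_0$ is a $p$-group, so every $p'$-element of $NT$ lies in $N$; hence $O^p(NT)\leq N$, and combining this with the universal property of $O^p$ gives $O^p(NT)=O^p(N)$. Write $K:=O^p(N)=O^p(NT)$. I would then invoke the standard fusion-theoretic hyperfocal identity for group fusion systems, namely $O^p(\F_P(H))=\F_{P\cap O^p(H)}(O^p(H))$ for any finite group $H$ with $P\in\Syl_p(H)$, applied once to $(H,P)=(N,S_0)$ and once to $(H,P)=(NT,T)$. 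This yields $O^p(\F_0)=\F_{S_0\cap K}(K)$ and $O^p(\F_T(NT))=\F_{T\cap K}(K)$; the elementary identity $T\cap K=T\cap N\cap K=S_0\cap K$ then gives the desired equality, and the uniqueness in Theorem~\ref{MainThm} closes the argument.

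The main obstacle is invoking the identity $O^p(\F_P(H))=\F_{P\cap O^p(H)}(O^p(H))$ cleanly; it is a known consequence of Puig's hyperfocal subgroup theorem, but one must verify that the right-hand side is saturated on $P\cap O^p(H)$, which amounts to checking that $P\cap O^p(H)\in\Syl_p(O^p(H))$. If a direct citation in \cite{Aschbacher/Kessar/Oliver:2011a} proved inconvenient, one could instead verify the equality $O^p(\F_T(NT))=O^p(\F_0)$ by hand, using that $\F_T(NT)$ is generated by $\Inn(T)$ together with the $\F$-morphisms induced by elements of $N$ (since $NT=TN$ and each $c_{nt}$ factors as $c_n\circ c_t$ with $c_t\in\Inn(T)$), so that passing to $O^p$ kills the $T$-contribution and leaves exactly the morphisms arising from $K=O^p(N)\leq N$, which are the generators of $O^p(\F_0)$.
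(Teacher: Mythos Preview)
Your argument is exactly the derivation the paper itself sketches in the introduction: apply Puig's hyperfocal identity to get $O^p(\F_T(NT))=O^p(\F_0)$ and then invoke the uniqueness clause of Theorem~\ref{MainThm}. As a piece of mathematics it is correct, and the details you give (why $T\in\Syl_p(NT)$, why $O^p(NT)=O^p(N)$, why $T\cap O^p(N)=S_0\cap O^p(N)$) are all in order.

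The problem is logical, not mathematical: in this paper Proposition~\ref{GroupProp} (in its strengthened form, Proposition~\ref{Prop1Str}) is an \emph{input} to the proof of Theorem~\ref{MainThm}. It is used repeatedly in Section~\ref{S5}---in Lemmas~\ref{ElPropN(P_0)}, \ref{S0leqU}, and \ref{AcleqAutN}---to analyse the constrained local subsystems $\m{D}(U_0)$ via their finite-group models, and these lemmas feed into the saturation proof for $\F_0T$. So you cannot appeal to Theorem~\ref{MainThm} here without circularity. The paper flags precisely this point in the paragraph following Proposition~\ref{GroupProp}.

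The paper's own proof is therefore direct. One inclusion is easy: for $P\in\F_T(NT)^{frc}$ one has $P_0\in\F_0^c$ by \cite[7.18]{AschbacherGeneralized}, and $O^p(\Aut_{NT}(P))=O^p(\Aut_N(P))\leq\Ac(P)$ since $NT/N$ is a $p$-group; Alperin then gives $\F_T(NT)\subseteq\F_0T$. The substantive direction is showing $\Ac(P)\leq\Aut_{NT}(P)$ whenever $P_0\in\F_0^c$. Given a $p'$-element $\vphi=c_g|_P$ with $[P,\vphi]\leq P_0$ and $\vphi|_{P_0}=c_n|_{P_0}$ for some $n\in N$, one uses coprime action to write $P=P_0\C_P(g)$, passes to the group $X=\N_G(PU)$ with $U=O_{p'}(\C_N(P_0))$, and shows via a Hall-conjugacy argument in a suitable quotient that $g$ lies in $\C_G(P)PN$, whence $\vphi\in\Aut_{NT}(P)$. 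This is the work your approach outsources to Theorem~\ref{MainThm}, and it has to be done somewhere.
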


By the Hyperfocal Subgroup Theorem of Puig \cite[\S 1.1]{Puig2000h} and \cite[Thm.~7.4]{Aschbacher/Kessar/Oliver:2011a}, 
$$O^p(\F_S(G))=\F_{S\cap O^p(G)}(O^p(G))$$ 
for any finite group $G$ with $S\in Syl_p(G)$. Thus, under the hypothesis of Proposition~\ref{GroupProp}, $O^p(\F_0)=\F_{S_0\cap O^p(N)}(O^p(N))=\F_{T\cap O^p(NT)}(O^p(NT))=O^p(\F_T(NT))$ as $O^p(NT)=O^p(N)$. Thus, Proposition~\ref{GroupProp} could be obtained as a consequence of Theorem~\ref{MainThm}. However, we need to prove Proposition~\ref{GroupProp} first, because it is applied in the proof of Theorem~\ref{MainThm} to constrained local subsystems, which by \cite{BCGLO2005} come from a finite group.

\smallskip

The overall structure of this paper is as follows: After some  preliminary results in Section~\ref{S2}, Proposition~\ref{GroupProp} is proved in Section~\ref{S3}. In Section~\ref{S4} we prove various properties of $\F_0T$, which in Sections~\ref{S5} and \ref{S6} are used to prove Theorem~\ref{MainThm}. We conclude in Section~\ref{S7} with some final remarks and examples. In particular, we explore in Subsection~\ref{factor} connections to factor systems. We adapt the definitions and notations from \cite{Aschbacher/Kessar/Oliver:2011a}, especially the ones from Part~II, as we write our functions on the right side. Furthermore, throughout this paper, we use the following notation:

\begin{mnotation}
 Set $\m{D}:=\F_0T$ and, for any $P\leq T$, $P_0:=P\cap S_0$.
\end{mnotation}

\subsubsection*{Acknowledgment} The author would like to thank Prof. Michael Aschbacher for many helpful and stimulating discussions and for hosting her for four weeks at Caltech in November and December 2011.

\section{Preliminaries}\label{S2}

In this section we collect some lemmas regarding fusion systems, which are necessary later on. According to Hypothesis~\ref{MainHyp}, $\F$ is a saturated fusion system on $S$. So in addition to the weak axioms \cite[Def.~2.1]{Aschbacher/Kessar/Oliver:2011a} that are satisfied in any fusion system, two non-trivial axioms need to be satisfied, the \textit{Sylow axiom} and the \textit{extension axiom}; see \cite[Prop.~2.5]{Aschbacher/Kessar/Oliver:2011a} and also \cite[Def.~2.2]{Aschbacher/Kessar/Oliver:2011a} for an equivalent definition. The extension axiom says that, for subgroups $P,Q\leq S$ with $Q$ fully $\F$-centralized, each $\vphi\in\Iso_\F(P,Q)$ extends to an element of $\Hom_\F(N_\vphi,S)$, where
$$N_\vphi:=N_\vphi^\F:=\{g\in \N_S(P):(c_g|_P)\vphi^*\in\Aut_S(Q)\}.$$
By the next remark, this is actually a natural condition, since $N_\vphi$ is the largest subgroup of $\N_S(P)$ to which $\vphi$ can possibly be extended.

\begin{remark}\label{littleLemma}
  Let $P\unlhd X\leq S$, and let $\psi:X\rightarrow S$ be a group monomorphism (not necessarily in $\F$)  such that $\vphi:=\psi|_P\in \Hom_\F(P,P\psi)$. Then for all $g\in X$, $(c_g|_P)\vphi^*=c_{g\psi}|_{P\psi}$. In particular, $X\leq N_\vphi$ and $\Aut_X(P)\vphi^*=\Aut_{X\psi}(P\psi)$.
\end{remark}

\begin{proof}
 For $h\in P\psi$, $h((c_g|_P)\vphi^*)=((h\psi^{-1})^g)\psi=h^{g\psi}=h(c_{g\psi}|_{P\psi})$. 
\end{proof}

As it will become apparent in the proofs, the above remark has also some very practical consequences, since in many cases it allows to extend a morphism in a subsystems of $\F$, provided there exists an extension in $\F$. In this connection also the next remark is useful. Recall that, given a (not necessarily saturated) fusion system $\m{E}$ on a finite $p$-group $R$, a subgroup $Q$ of $R$ is called \textit{fully automized} in $\m{E}$ if $Aut_R(Q)\in \Syl_p(Aut_\m{E}(Q))$.

\begin{remark}\label{NphicapR}
Suppose $\m{E}$ is a subsystem of $\F$ on a subgroup $R$ of $S$. Let $P\leq R$ and $\vphi\in \Hom_\m{E}(P,R)$ such that $P\vphi$ is fully automized in $\m{E}$. Then $\Aut_{N_\vphi\cap R}(P)\vphi^*\leq \Aut_R(P)$ and
$$N_\vphi^\m{E}=N_\vphi\cap R.$$
\end{remark}

\begin{proof}
Note $\Aut_R(P\vphi)\leq \Aut_S(P\vphi)\cap \Aut_\m{E}(P\vphi)$, so as $P\vphi$ is fully automized in $\m{E}$, $\Aut_R(P\vphi)=\Aut_S(P\vphi)\cap \Aut_\m{E}(P\vphi)$. Then by definition of $N_\vphi$, $\Aut_{N_\vphi\cap R}(P)\vphi^*\leq Aut_S(P\vphi)\cap Aut_\m{E}(P\vphi)=\Aut_R(P\vphi)$ which yields the assertion.
\end{proof}

The next rather specialized result gives a connection between two potentially different extensions of a morphism.

\begin{lemma}\label{CommInC(P0)}
Let $P\in\F$, $Q\unlhd P$, $\gamma\in \Aut_\F(P)$ and $\beta\in \Hom_\F(P,S)$ such that $\beta|_Q=\gamma|_Q$. Then $[\C_P(\gamma),\beta]\leq \C_S(Q\beta)$.
\end{lemma}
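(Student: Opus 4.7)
The plan is to reduce the statement to a single pointwise identity by exploiting the hypothesis $\beta|_Q=\gamma|_Q$ on conjugates, and then simply rearrange. So fix $x\in\C_P(\gamma)$ and $q\in Q$; the goal is to show that the generator $[x,\beta]=x^{-1}(x\beta)$ of $[\C_P(\gamma),\beta]$ commutes with $q\beta$.

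The key observation is that $Q\unlhd P$ lets me move conjugation by $x$ inside $Q$: $xqx^{-1}\in Q$, and therefore the hypothesis applies to this element, giving $(xqx^{-1})\beta=(xqx^{-1})\gamma$. Expanding each side as a homomorphism, the left side becomes $(x\beta)(q\beta)(x\beta)^{-1}$, while on the right I use $x\gamma=x$ (since $x\in\C_P(\gamma)$) and $q\gamma=q\beta$ (by the hypothesis again, now applied to $q\in Q$) to obtain $x(q\beta)x^{-1}$. Equating the two expressions yields
$$(x\beta)(q\beta)(x\beta)^{-1}=x(q\beta)x^{-1},$$
and multiplying through shows that $x^{-1}(x\beta)$ commutes with $q\beta$, i.e.\ $[x,\beta]\in\C_S(q\beta)$. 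Since $q\in Q$ was arbitrary, $[x,\beta]\in\C_S(Q\beta)$, and since the generators lie in $\C_S(Q\beta)$ the whole subgroup $[\C_P(\gamma),\beta]$ does too.

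There is no serious obstacle here: the proof is a three–line manipulation whose only subtlety is recognizing that one should feed $xqx^{-1}$ (rather than $q$) into the hypothesis $\beta|_Q=\gamma|_Q$, which is precisely what the normality assumption $Q\unlhd P$ is there to permit.
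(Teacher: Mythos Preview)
Your proof is correct and is essentially the same argument as the paper's, just written pointwise rather than in terms of conjugation maps. The paper uses Remark~\ref{littleLemma} to deduce $c_{x\beta}|_{Q\beta}=c_x|_{Q\beta}$ and then concludes $c_{x^{-1}(x\beta)}|_{Q\beta}=\id_{Q\beta}$; unwinding this identity at a single element $q\beta$ is exactly the equation $(x\beta)(q\beta)(x\beta)^{-1}=x(q\beta)x^{-1}$ that you obtain by applying $\beta|_Q=\gamma|_Q$ to $xqx^{-1}$.
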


\begin{proof}
Observe first that $Q\beta=Q\gamma$ is normal in $P\gamma=P$. Let $x\in \C_P(\gamma)$. Using \ref{littleLemma} we obtain
$$c_{x\beta}|_{Q\beta}=c_x|_{Q}(\beta|_{Q})^*=c_x|_{Q}(\gamma|_{Q})^*=c_{x\gamma}|_{Q\gamma}=c_x|_{Q\gamma}=c_x|_{Q\beta}.$$
Hence
$$c_{x^{-1}(x\beta)}|_{Q\beta}=(c_x|_{Q\beta})^{-1}c_{x\beta}|_{Q\beta}=(c_x|_{Q\beta})^{-1}c_x|_{Q\beta}=\id_{Q\beta}.$$
This implies $[x,\beta]=x^{-1}(x\beta)\in \C_S(Q\beta)$ and thus the assertion.
\end{proof}

We now turn attention to the normal subsystem $\F_0$ of $\F$; see \cite[Section~I.6]{Aschbacher/Kessar/Oliver:2011a} for a detailed introduction to normal subsystems. The next two lemmas are concerned with properties of subgroups of $S_0$.

\begin{lemma}\label{PrelI}
\begin{itemize}
\item[(a)] For any $P_0\in \F_0^c$, $P_0^\F\subseteq \F_0^c$.
\item[(b)] Let $P_0\in \F_0^f$ and $\alpha\in \Hom_\F(\N_{S_0}(P_0),S_0)$. Then $P_0\alpha\in \F_0^f$ and $\N_{S_0}(P_0)\alpha=\N_{S_0}(P_0\alpha)$.
\end{itemize}
\end{lemma}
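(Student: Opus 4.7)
The key tool is the $\F$-invariance of $\F_0$ that comes with normality. It gives a Frattini-style decomposition: any $\F$-isomorphism $\alpha$ between subgroups of $S_0$ factors as $\alpha = \alpha_0 \cdot \chi$, where $\alpha_0$ is an $\F_0$-isomorphism and $\chi$ is the restriction of some element of $\Aut_\F(S_0)$. The strategy for both (a) and (b) is then the same: verify that the property in question ($\F_0$-centric in (a), fully $\F_0$-normalized in (b)) is preserved under each factor type --- trivially under $\alpha_0$, and under $\chi$ by $\F$-invariance combined with the fact that $\chi$ acts as an ordinary group automorphism of $S_0$ and so preserves the orders of centralizers and normalizers.

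For (a), let $Q_0 \in P_0^\F$; strong closure of $S_0$ forces $Q_0 \leq S_0$. Choose $\alpha \in \Iso_\F(P_0,Q_0)$ and factor it as $\alpha = \alpha_0 \cdot \chi|_{P_0\alpha_0}$. Set $P' = P_0\alpha_0$, which lies in $P_0^{\F_0}$ and hence in $\F_0^c$. Given any $R \in Q_0^{\F_0} = (P'\chi)^{\F_0}$, applying $\F$-invariance to $\chi^{-1} \in \Aut_\F(S_0)$ converts the connecting $\F_0$-isomorphism $P'\chi \to R$ into an $\F_0$-isomorphism $P' \to R\chi^{-1}$, so $R\chi^{-1} \in (P')^{\F_0} \subseteq \F_0^c$; in particular $\C_{S_0}(R\chi^{-1}) \leq R\chi^{-1}$. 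Applying the group automorphism $\chi$ to both sides yields $\C_{S_0}(R) \leq R$, hence $Q_0 \in \F_0^c$.

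For (b), set $N = \N_{S_0}(P_0)$. The containment $N\alpha \leq \N_{S_0}(P_0\alpha)$ is immediate since $\alpha$ is a group homomorphism. Factor $\alpha = \alpha_0 \cdot \chi|_{N\alpha_0}$ and set $P' = P_0\alpha_0 \in P_0^{\F_0}$, so $P_0\alpha = P'\chi$. Because $P_0 \in \F_0^f$ we have $|\N_{S_0}(P')| \leq |N|$, and because $\chi$ is a group automorphism of $S_0$ we have $|\N_{S_0}(P_0\alpha)| = |\N_{S_0}(P'\chi)| = |\N_{S_0}(P')| \leq |N| = |N\alpha|$. Combined with $N\alpha \leq \N_{S_0}(P_0\alpha)$ this forces equality $N\alpha = \N_{S_0}(P_0\alpha)$. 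For $P_0\alpha \in \F_0^f$, any $R \in (P_0\alpha)^{\F_0}$ pulls back via $\chi$ by invariance to an element of $(P')^{\F_0} = P_0^{\F_0}$; since normalizer sizes are preserved by $\chi$, this gives $|\N_{S_0}(R)| \leq |\N_{S_0}(P_0)| = |\N_{S_0}(P_0\alpha)|$, as required.

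The main obstacle is simply invoking the Frattini factorization; once this is in hand (a standard consequence of $\F_0 \unlhd \F$, cf.~\cite[Section~I.6]{Aschbacher/Kessar/Oliver:2011a}), the rest is clean bookkeeping with $\F$-invariance and the observation that the outer factor $\chi \in \Aut_\F(S_0)$ acts on $S_0$ as an ordinary group automorphism, so it permutes $\F_0$-conjugacy classes of subgroups while preserving centralizer and normalizer orders.
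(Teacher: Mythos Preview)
Your proof is correct and follows essentially the same route as the paper's: both arguments invoke the Frattini factorization $\alpha = \alpha_0\chi$ (with $\alpha_0$ in $\F_0$ and $\chi \in \Aut_\F(S_0)$), handle the $\alpha_0$-factor trivially, and then use that $\chi$ induces an automorphism of $\F_0$ preserving centricity and full normalization. The only difference is cosmetic: the paper states up front that elements of $\Aut_\F(S_0)$ induce automorphisms of $\F_0$ and thereby preserve the sets $\F_0^c$ and $\F_0^f$, whereas you unpack this claim explicitly via centralizer/normalizer order comparisons.
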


\begin{proof}
Note that every element of $\Aut_\F(S_0)$ induces an automorphism of $\F_0$ and thus maps every $\F_0$-centric subgroup to an $\F_0$-centric subgroup and every fully $\F_0$-normalized subgroup to a fully $\F_0$-normalized subgroup. If $P_0\in\F_0^c$ and $\vphi\in \Hom_\F(P_0,S)$ then by the Frattini argument for fusion systems \cite[Prop.~I.6.4]{Aschbacher/Kessar/Oliver:2011a}, $\vphi=\vphi_0\beta$ for $\vphi_0\in \Hom_{\F_0}(P_0,S_0)$ and some $\beta\in \Aut_\F(S_0)$. Then $P_0\vphi_0\in \F_0^c$ as $P_0\in\F_0^c$. Hence, also $P_0\vphi=(P_0\vphi_0)\beta\in\F_0^c$ proving (a). Let now $P_0$ and $\alpha$ be as in (b). Then again by the Frattini argument \cite[Prop.~I.6.4]{Aschbacher/Kessar/Oliver:2011a}, $\alpha=\alpha_0\beta$ for some $\alpha_0\in\Hom_{\F_0}(\N_{S_0}(P_0),S_0)$ and some $\beta\in\Aut_\F(S_0)$. As $P_0\in\F_0^f$ and $\N_{S_0}(P_0)\alpha_0\leq \N_{S_0}(P_0\alpha_0)$, we have $P_0\alpha_0\in \F_0^f$ and $\N_{S_0}(P_0)\alpha_0= \N_{S_0}(P_0\alpha_0)$. Hence, $P_0\alpha=P_0\alpha_0\beta\in\F_0^f$ and $\N_{S_0}(P_0)\alpha=\N_{S_0}(P_0\alpha_0)\beta=\N_{S_0}(P_0\alpha)$, which proves (b).
\end{proof}

\begin{lemma}\label{PrelII}
 Let $Q_0\leq S_0$ such that $Q_0\in\F^f$. Then $Q_0\in\F_0^f$.
\end{lemma}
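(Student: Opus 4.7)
The plan is to pick $P_0$ to be a fully $\F_0$-normalized representative of the $\F_0$-conjugacy class of $Q_0$, and then transfer full $\F_0$-normalization from $P_0$ to $Q_0$ by finding a suitable $\F$-morphism sending $P_0$ to $Q_0$ and applying Lemma~\ref{PrelI}(b). Such a $P_0$ exists simply because $\F_0$ is a fusion system on $S_0$, and because $\F_0 \subseteq \F$, the subgroups $P_0$ and $Q_0$ are $\F$-conjugate.

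The key step is to produce an $\F$-morphism $\alpha \in \Hom_\F(\N_{S_0}(P_0), S_0)$ with $P_0\alpha = Q_0$. To this end, I would first choose any $\F$-isomorphism $\psi \colon P_0 \to Q_0$. Since $Q_0 \in \F^f$, the group $Q_0$ is fully $\F$-automized, so $\Aut_S(Q_0)$ is a Sylow $p$-subgroup of $\Aut_\F(Q_0)$; by Sylow's theorem there exists $\chi \in \Aut_\F(Q_0)$ such that $\Aut_S(P_0)(\psi\chi)^* \leq \Aut_S(Q_0)$. By the definition of $N_{\psi\chi}$ this means $\N_S(P_0) \leq N_{\psi\chi}$. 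Because $Q_0$ is also fully $\F$-centralized, the extension axiom applied to $\psi\chi$ yields an extension $\wh\psi \in \Hom_\F(\N_S(P_0), S)$ with $P_0\wh\psi = Q_0$. Since $S_0$ is strongly $\F$-closed (by the normality of $\F_0$), the restriction $\alpha := \wh\psi|_{\N_{S_0}(P_0)}$ takes values in $S_0$, giving $\alpha \in \Hom_\F(\N_{S_0}(P_0), S_0)$ with $P_0\alpha = Q_0$.

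Now Lemma~\ref{PrelI}(b) applied to $P_0 \in \F_0^f$ and $\alpha$ directly yields $Q_0 = P_0\alpha \in \F_0^f$, which is the desired conclusion. (As a byproduct one also obtains $\N_{S_0}(P_0)\alpha = \N_{S_0}(Q_0)$, reconfirming the equality $|\N_{S_0}(P_0)| = |\N_{S_0}(Q_0)|$.)

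I do not expect any real obstacle here: the only substantive input is the standard ``fully automized + fully centralized $\Rightarrow$ receptive'' consequence of the saturation axioms, together with strong closure of $S_0$ in $\F$, both of which are routine. The argument is essentially a transport-of-structure trick designed so that Lemma~\ref{PrelI}(b) can be invoked.
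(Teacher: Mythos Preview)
Your proof is correct and follows essentially the same approach as the paper: choose $P_0\in Q_0^{\F_0}\cap\F_0^f$, produce an $\F$-morphism defined on $\N_S(P_0)$ (or $\N_{S_0}(P_0)$) sending $P_0$ to $Q_0$, and conclude. The only cosmetic differences are that the paper obtains the extension by citing \cite[Lemma~II.3.1]{Aschbacher/Kessar/Oliver:2011a} (whose proof you essentially reproduce via the Sylow/extension-axiom argument), and finishes by the direct order comparison $|\N_{S_0}(P_0)|=|\N_{S_0}(P_0)\vphi|\leq |\N_{S_0}(Q_0)|$ rather than invoking Lemma~\ref{PrelI}(b).
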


\begin{proof}
 Let $P_0\in Q_0^{\F_0}\cap\F_0^f$. As $Q_0\in\F^f$, it follows from  \cite[Lemma~II.3.1]{Aschbacher/Kessar/Oliver:2011a} that there exists $\vphi\in\Hom_\F(\N_S(P_0),S)$ such that $P_0\vphi=Q_0$. Then $\N_{S_0}(P_0)\vphi\leq \N_{S_0}(Q_0)$, so $Q_0\in\F_0^f$ as $P_0\in\F_0^f$.
\end{proof}

We conclude this section with a technical result needed in the proof of \ref{P0Extend}. It gives some properties of extensions of morphisms between subgroups of $S_0$.

\begin{lemma}\label{NewLemma}
 Let $\m{E}$ be a subsystem on $T$, $V_0\in\F_0^c$, $P_0\in V_0^\F$, $\alpha\in\Hom_\m{E}(P_0,V_0)$, $Q_0=\N_\alpha^{\m{E}}\cap S_0$ and $\hat{\alpha}\in\Hom_{\m{E}}(Q_0,S_0)$ such that $\hat{\alpha}|_{P_0}=\alpha$. Then 
$$\Aut_{\N_\alpha^{\m{E}}}(Q_0)\hat{\alpha}^*\leq \Aut_{\N_T(V_0)}(Q_0\hat{\alpha})\C_{\Aut_\m{E}(Q_0\hat{\alpha})}(V_0)$$
and $\{t\in \N_T(V_0):c_t|_{V_0}\in \Aut_{\N_\alpha^{\m{E}}}(P_0)\alpha^*\}\subseteq \N_T(Q_0\hat{\alpha})$.
\end{lemma}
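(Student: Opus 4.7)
The plan is to prove the second inclusion first, and then deduce the first one from it. The key tool is Remark~\ref{littleLemma} applied to the group monomorphism $\hat{\alpha}\colon Q_0\to S_0$ together with the normal subgroup $P_0\unlhd Q_0$: since $\hat{\alpha}|_{P_0}=\alpha$, it yields the identity
$$(c_q|_{P_0})\alpha^*=c_{q\hat{\alpha}}|_{V_0}\qquad\text{for every }q\in Q_0,$$
where $V_0:=P_0\alpha$. Throughout I will also use that $S_0$, being strongly closed in $\F$, is normal in $T$.

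For the second inclusion, let $t\in \N_T(V_0)$ with $c_t|_{V_0}=(c_g|_{P_0})\alpha^*$ for some $g\in\N_\alpha^\m{E}$, and fix $q\in Q_0$. I first check that $g^{-1}qg\in Q_0$: it lies in $S_0$ because $S_0\unlhd T$, it normalizes $P_0$ since $g$ and $q$ do, and $(c_{g^{-1}qg}|_{P_0})\alpha^*$ is the conjugate of the element $(c_q|_{P_0})\alpha^*\in\Aut_T(V_0)$ by $c_t|_{V_0}\in\Aut_T(V_0)$, hence also lies in $\Aut_T(V_0)$. Combining the key identity above with the defining relation for $t$, a short conjugation computation gives
$$c_{(q\hat{\alpha})^t}|_{V_0}=(c_t|_{V_0})^{-1}(c_{q\hat{\alpha}}|_{V_0})(c_t|_{V_0})=(c_{g^{-1}qg}|_{P_0})\alpha^*=c_{(g^{-1}qg)\hat{\alpha}}|_{V_0}.$$
Since both $(q\hat{\alpha})^t$ and $(g^{-1}qg)\hat{\alpha}$ lie in $S_0$ (using $S_0\unlhd T$), their quotient lies in $\C_{S_0}(V_0)\leq V_0\leq Q_0\hat{\alpha}$, where the first inclusion uses $V_0\in\F_0^c$. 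Hence $(q\hat{\alpha})^t\in Q_0\hat{\alpha}$, and $t$ normalizes $Q_0\hat{\alpha}$.

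For the first inclusion, take $\psi=c_g|_{Q_0}\in\Aut_{\N_\alpha^\m{E}}(Q_0)$ with $g\in\N_\alpha^\m{E}$, and choose $t\in\N_T(V_0)$ with $c_t|_{V_0}=(c_g|_{P_0})\alpha^*$. The second inclusion shows $t\in\N_T(Q_0\hat{\alpha})$, so $c_t|_{Q_0\hat{\alpha}}\in\Aut_{\N_T(V_0)}(Q_0\hat{\alpha})$. A direct calculation using $\hat{\alpha}|_{P_0}=\alpha$ gives $(\psi\hat{\alpha}^*)|_{V_0}=c_t|_{V_0}$, so $\delta:=(c_t|_{Q_0\hat{\alpha}})^{-1}(\psi\hat{\alpha}^*)$ restricts to the identity on $V_0$ and therefore belongs to $\C_{\Aut_\m{E}(Q_0\hat{\alpha})}(V_0)$. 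This yields the required factorization $\psi\hat{\alpha}^*=(c_t|_{Q_0\hat{\alpha}})\cdot\delta$.

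The main obstacle lies in the second inclusion, specifically in forcing the discrepancy element $(q\hat{\alpha})^t\cdot((g^{-1}qg)\hat{\alpha})^{-1}$ into $Q_0\hat{\alpha}$. This is exactly where the hypotheses bite: normality of $S_0$ in $T$ confines the discrepancy to $S_0$, and $\F_0$-centricity of $V_0$ then confines it to $V_0\leq Q_0\hat{\alpha}$. Without both of these the approach collapses, which also explains why centricity of $V_0$ appears as a standing hypothesis of the lemma.
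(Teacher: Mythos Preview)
Your proof is correct and follows essentially the same route as the paper: prove the second inclusion first using $\F_0$-centricity of $V_0$, then deduce the first by restricting $\psi\hat{\alpha}^*$ to $V_0$ and factoring off the resulting inner automorphism. The only cosmetic difference is that, for the second inclusion, the paper argues at the level of groups (observing $\Aut_{Q_0}(P_0)\unlhd\Aut_{N_\alpha^{\m{E}}}(P_0)$, hence $\Aut_{W_0}(V_0)$ is normalized by $c_t|_{V_0}$, whence $W_0^t\leq W_0\C_{S_0}(V_0)=W_0$), whereas you unpack this element by element, explicitly matching $(q\hat\alpha)^t$ with $(g^{-1}qg)\hat\alpha$ modulo $\C_{S_0}(V_0)$.
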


\begin{proof}
 Set $W_0:=Q_0\hat{\alpha}$ and let $t\in \N_T(V_0)$ such that $c_t|_{V_0}\in \Aut_{\N_\alpha^\m{E}}(P_0)\alpha^*$. Observe $Q_0\unlhd \N_\alpha^\m{E}$ and thus $\Aut_{Q_0}(P_0)\unlhd \Aut_{\N_\alpha^\m{E}}(P_0)$. Using \ref{littleLemma}, we get $\Aut_{W_0}(V_0)=\Aut_{Q_0}(P_0)\alpha^*\unlhd \Aut_{\N_\alpha^\m{E}}(P_0)\alpha^*$. In particular, $\Aut_{W_0}(V_0)$ is normalized by $c_t|_{V_0}$ and thus, again by \ref{littleLemma}, 
$$\Aut_{W_0^t}(V_0)=\Aut_{W_0}(V_0)(c_t|_{V_0})^*=\Aut_{W_0}(V_0).$$
Hence, $W_0^t\leq W_0\C_{S_0}(V_0)=W_0$ as $V_0\in\F_0^c$. This proves $t\in \N_T(W_0)$ and thus the second part of the assertion. For the first part let $\psi\in \Aut_{\N_\alpha^\m{E}}(Q_0)\hat{\alpha}^*$ and note that $\psi|_{V_0}\in \Aut_{\N_\alpha^\m{E}}(P_0)\alpha^*\leq \Aut_T(V_0)$. Hence, there exists $s\in \N_T(V_0)$ such that $\psi|_{V_0}=c_s|_{V_0}$. By what we have proved before, $s\in \N_T(W_0)$, so $\psi (c_s|_{W_0})^{-1}\in \C_{\Aut_{\m{E}}(W_0)}(V_0)$. This completes the proof.
\end{proof}

\section{The proof of Proposition~\ref{GroupProp}}\label{S3}

We prove the following slightly stronger version of Proposition~\ref{GroupProp}:

\begin{proposition}\label{Prop1Str}
Suppose $\F=\F_S(G)$ for some finite group $G$ with $S\in \Syl_p(G)$, and there exists a normal subgroup $N$ of $G$ such that $S_0=S\cap N$ and $\F_0=\F_{S_0}(N)$.  Then $\F_0T=\F_T(NT)$ and $\Ac(P)=O^p(\Aut_{\F_0T}(P))=O^p(\Aut_N(P))$ for any $P\leq T$ with $P\cap S_0\in\F_0^c$.
\end{proposition}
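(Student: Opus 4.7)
The plan is to prove the three assertions in order: (i) $\Ac(P) = O^p(\Aut_N(P))$ for any $P \leq T$ with $P_0 \in \F_0^c$; (ii) $\F_0 T = \F_T(NT)$; (iii) the remaining identity $\Ac(P) = O^p(\Aut_{\F_0 T}(P))$, which then follows immediately from (i) and (ii).

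For (i), the containment $O^p(\Aut_N(P)) \subseteq \Ac(P)$ is a direct verification: any $p'$-element of $\Aut_N(P)$ has the form $c_n|_P$ with $n \in \N_N(P)$, and one checks that $[P, c_n|_P] \leq P \cap N = P_0$ (because for each $p \in P$, $[p, n] = p^{-1}(n^{-1} p n)$ lies in $P$ since $n \in \N_N(P)$ and in $N$ by normality) and that $c_n|_{P_0} \in \Aut_N(P_0) = \Aut_{\F_0}(P_0)$, so $c_n|_P$ is a generator of $\Ac(P)$. The reverse containment is the heart of the argument. Take a $p'$-element generator $\vphi = c_g|_P$ of $\Ac(P)$; replacing $g$ by its $p'$-part, we may assume $g \in \N_G(P)$ has $p'$-order. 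The hypothesis $\vphi|_{P_0} \in \Aut_N(P_0)$ supplies $n \in \N_N(P_0)$ of $p'$-order with $c_n|_{P_0} = c_g|_{P_0}$, giving $g \in N \cdot \C_G(P_0)$. The task is to refine this to $g \in N \cdot \C_G(P)$, equivalently to find $m \in \N_N(P)$ with $c_m|_P = \vphi$. The $\F_0$-centricity of $P_0$ supplies the splitting $\C_N(P_0) = Z(P_0) \times O_{p'}(\C_N(P_0))$, and coprime action of the $p'$-element $g$ on the $p$-group $P$ yields $P = P_0 \cdot \C_P(g)$. Combining these structural facts, most likely through a careful Schur--Zassenhaus-style lifting inside $PN \leq G$, should produce the required $m$. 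I expect this refinement to be the main technical obstacle.

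For (ii), the inclusion $\F_0 T \subseteq \F_T(NT)$ follows at once from (i), since each generator $\Ac(P)$ sits inside $\Aut_N(P) \subseteq \Aut_{NT}(P) = \Aut_{\F_T(NT)}(P)$. For the reverse I would use that $\F_T(NT)$ is saturated (as $T \in \Syl_p(NT)$) and invoke Alperin's fusion theorem: $\F_T(NT)$ is generated by $\Aut_{NT}(Q)$ for $Q \leq T$ ranging over a suitable collection. The key structural fact is that for $Q$ fully $\F_T(NT)$-normalized, the Sylow axiom gives $\Aut_T(Q) \in \Syl_p(\Aut_{NT}(Q))$, while $\Aut_{NT}(Q)/\Aut_N(Q)$ embeds into $NT/N \cong T/S_0$ and is therefore a $p$-group; hence $\Aut_{NT}(Q) = \Aut_T(Q) \cdot O^p(\Aut_N(Q))$. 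By restricting the generating collection so that $Q_0 \in \F_0^c$ (e.g.\ by taking $\F_T(NT)$-centric subgroups and verifying that centricity passes to $Q_0$), (i) yields $O^p(\Aut_N(Q)) = \Ac(Q) \subseteq \F_0 T$, and $\Aut_T(Q)$ lies in $\F_0 T$ automatically. This gives $\Aut_{NT}(Q) \subseteq \F_0 T$ for the generating $Q$, hence $\F_T(NT) \subseteq \F_0 T$. Finally (iii) is immediate: $\Aut_{\F_0 T}(P) = \Aut_{\F_T(NT)}(P) = \Aut_{NT}(P)$, so $O^p(\Aut_{\F_0 T}(P)) = O^p(\Aut_{NT}(P)) = O^p(\Aut_N(P)) = \Ac(P)$ by (i).
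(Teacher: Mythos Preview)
Your overall strategy coincides with the paper's: establish $O^p(\Aut_N(P))\leq \Ac(P)$ easily, use Alperin for $\F_T(NT)\subseteq \F_0T$, and deduce the remaining identity from the equality of the two systems. The parts (ii) and (iii) of your outline are essentially the paper's argument; the only point to flag there is that the passage from ``$Q$ is $\F_T(NT)$-centric'' to ``$Q_0\in\F_0^c$'' genuinely requires $Q$ to be \emph{radical} as well, not merely centric---the paper invokes \cite[7.18]{AschbacherGeneralized} for this, applied to $\F_0\unlhd \F_T(NT)$.

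The real issue is the reverse inclusion $\Ac(P)\leq O^p(\Aut_N(P))$ in your step (i). You correctly isolate the two structural inputs (the splitting $\C_N(P_0)=Z(P_0)\times U$ with $U:=O_{p'}(\C_N(P_0))$, and $P=P_0\C_P(g)$ from coprime action), but ``a Schur--Zassenhaus-style lifting inside $PN$'' is not a workable plan: the element $g\in\N_G(P)$ realizing $\varphi$ need not lie in $PN$ at all, so there is no ambient argument available there. What the paper does instead is pass to the larger group $X:=\N_G(PU)$, which contains both $g$ and $n$, and then to the quotient $\overline{X}:=X/Y$ with $Y:=\C_X(PU/U)$. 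One checks $gn^{-1}$ lies in $C:=\C_X(PU/P_0U)\cap\C_X(P_0)$; the image $\overline{C}$ is a $p$-group (a stability-group argument, \cite[8.2.2(b)]{KurzweilStellmacher2004}), so $\langle\overline{g}\rangle$ and $\langle\overline{n}\rangle$ are both Hall $p'$-subgroups of the solvable group $\langle\overline{g}\rangle\overline{C}=\langle\overline{n}\rangle\overline{C}$ and hence conjugate. This forces $\overline{g}\in\overline{N\cap X}$, i.e.\ $g\tilde{n}^{-1}\in Y$ for some $\tilde{n}\in N$; a Frattini argument inside $YP$ then gives $g\in \C_G(P)PN$, so $\varphi=c_g|_P\in\Aut_{PN}(P)$ and thus $\varphi\in O^p(\Aut_N(P))$. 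The conjugacy-of-Hall-complements idea you gesture at is indeed the engine, but it has to be run in this carefully chosen quotient, not in $PN$.
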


\begin{proof}
Observe that, for any $P\leq T$, $\N_{NT}(P)/\N_N(P)\cong \N_{NT}(P)N/N\leq TN/N$ is a $p$-group and hence $O^p(\N_{NT}(P))=O^p(\N_N(P))$. This implies
\begin{equation}\label{Group1}
O^p(\Aut_{NT}(P))=O^p(\Aut_N(P))\leq \Ac(P)\mbox{ for any }P\leq T.
\end{equation}
 If $P\in\F_T(NT)^{frc}$, then by \cite[7.18]{AschbacherGeneralized}, $P_0\in \F_0^c$. Moreover, $\Aut_T(P)\in \Syl_p(\Aut_{NT}(P))$ and thus $\Aut_{NT}(P)=O^p(\Aut_{NT}(P))\Aut_T(P)$. Hence, by definition of $\F_0T$, (\ref{Group1}) and Alperin's Fusion Theorem \cite[Thm.~A.10]{BrotoLeviOliver2003}, we have $\F_T(NT)\subseteq \F_0T$.

\smallskip

To prove $\F_0T\subseteq \F_T(NT)$ let $P\leq T$ such that $P_0\in\F_0^c$. We need to show that $\Ac(P)\leq \Aut_{NT}(P)$. Let $\vphi\in \Aut_\F(P)$ be a $p^\prime$-element such that $[P,\vphi]\leq P_0$ and $\vphi|_{P_0}\in \Aut_{\F_0}(P_0)$. Then there exist $p^\prime$-elements $g\in \N_G(P)$ and $n\in \N_N(P_0)$ such that $\vphi=c_g|_P$ and $\vphi|_{P_0}=c_n|_{P_0}$. Then $gn^{-1}\in \C_G(P_0)$. Moreover, $[P,g]\leq P_0$ and thus, by a property of coprime action \cite[8.2.7(a)]{KurzweilStellmacher2004}, $P=P_0\C_P(g)$. Observe that $[\C_P(g),gn^{-1}]=[\C_P(g),n^{-1}]\leq N$. Thus, $[P,gn^{-1}]\leq N$. As $gn^{-1}\in \C_G(P_0)$ and $\C_G(P_0)$ is normalized by $P$, this implies $[P,gn^{-1}]\leq \C_N(P_0)$. Set
$$U:=O_{p^\prime}(\C_N(P_0)),\;X:=\N_G(PU),\;C:=\C_X(PU/P_0U)\cap \C_X(P_0).$$
Observe that $U=O_{p^\prime}(PU)\unlhd X$ and thus $Y:=\C_X(PU/U)\unlhd X$. Set 
$$\ov{X}:=X/Y.$$
Since $P_0\in\F_0^c$, by \cite[Lemma~A.4]{BrotoLeviOliver2003b}, $\C_N(P_0)=Z(P_0)\times U$. From above, $gn^{-1}\in \C_G(P_0)\leq N_G(U)$ and $[P,gn^{-1}]\in \C_N(P_0)=Z(P_0)U$, so $gn^{-1}\in X$ and $gn^{-1}\in C$. Since $P_0=P\cap N\unlhd \N_G(P)$, it follows $\N_G(P)\leq \N_G(U)$ and hence $g\in \N_G(P)\leq X$. Thus we have shown that $g,n\in X$ and $gC=nC$, whence also  $\ov{g}\ov{C}=\ov{n}\ov{C}$. Observe that $\ov{C}$ is a $p$-group by \cite[8.2.2(b)]{KurzweilStellmacher2004}. It follows that $\<\ov{g}\>\ov{C}=\<\ov{n}\>\ov{C}$ and $\<\ov{g}\>,\<\ov{n}\>$ are $p^\prime$-Hall subgroups of $\<\ov{g}\>\ov{C}$. Thus, as $\<\ov{g}\>\ov{C}$ is solvable, $\<\ov{g}\>$ and $\<\ov{n}\>$ are conjugate in $\<\ov{g}\>\ov{C}$. This implies $\<\ov{g}\>\leq \<\<\ov{n}\>^{\ov{X}}\>\leq \ov{N\cap X}$ and thus $\ov{g}\in \ov{N\cap X}$. Therefore, there exists $\tilde{n}\in N$ such that $g\tilde{n}^{-1}\in Y$. Observe that $[\N_Y(P),P]\leq P\cap [Y,P]\leq P\cap U=1$ and so $\N_Y(P)\leq \C_G(P)$. Hence, by a Frattini argument, $g\tilde{n}^{-1}\in YP=\N_{YP}(P)(PU)=\N_Y(P)PU\leq \C_G(P)PN$. It follows $g\in \C_G(P)PN$ and thus $\vphi=c_g|_P\in \Aut_{PN}(P)\leq \Aut_{TN}(P)$. This proves
$$\Ac(P)\leq O^p(\Aut_{NT}(P)).$$
So $\F_0T\subseteq \F_T(NT)$ and, by (\ref{Group1}), $\Ac(P)=O^p(\Aut_{NT}(P))=O^p(\Aut_N(P))$.
\end{proof}

\section{Properties of $\m{D}=\F_0T$}\label{S4}

\begin{remark}\label{Decompose}
Let $P\leq S_0$ and $\alpha\in \Hom_{\F_0T}(P,S_0)$. Then $\alpha=c_t\alpha_0$ for some $t\in T$ and $\alpha_0\in \Hom_{\F_0}(P^t,S_0)$. Moreover, for any such $t,\alpha_0$, we have $N_\alpha^t\leq N_{\alpha_0}$. 
\end{remark}

\begin{proof}
By construction of $\F_0T$, $\alpha$ is the product of morphisms in $\F_0$ and morphisms induced by $T$. Moreover, for any $Q\leq S_0$, $\beta\in\Hom_{\F_0}(Q,S_0)$ and $s\in T$, we have $\beta (c_s|_{Q\beta})=(c_s|_Q)\hat{\beta}$ where $\hat{\beta}:=(c_s|_Q)^{-1}\beta (c_s|_{Q\beta})\in\F_0$ as $\F_0$ is normal in $\F$. This yields the existence of $t\in T$ and $\alpha_0\in \Hom_{\F_0}(P^t,S_0)$ with $\alpha=c_t\alpha_0$. Using \ref{littleLemma}, we obtain for any such $t,\alpha_0$ that 
$$\Aut_{N_\alpha^t}(P^t)\alpha_0^*=\Aut_{N_\alpha}(P)c_t^*\alpha_0^*=\Aut_{N_\alpha}(P)\alpha^*\leq \Aut_S(P\alpha).$$
Hence, $N_\alpha^t\leq N_{\alpha_0}$.
\end{proof}

\begin{lemma}\label{Lemma0}
Let $P_0\leq S_0$ and $\vphi\in \Hom_{\m{D}}(P_0,S_0)$ such that $P_0\vphi$ is fully $\F_0$-normalized. Then $\vphi$ extends to $\hat{\vphi}\in \Hom_{\m{D}}(N_\vphi\cap S_0,S_0)$.
\end{lemma}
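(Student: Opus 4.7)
The plan is to reduce the extension problem inside $\m{D}$ to an application of the extension axiom in the saturated subsystem $\F_0$, using the decomposition of morphisms provided by Remark~\ref{Decompose}.

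First, I would apply Remark~\ref{Decompose} to write $\vphi=c_t\alpha_0$ with $t\in T$, $\alpha_0\in\Hom_{\F_0}(P_0^t,S_0)$, and $N_\vphi^t\leq N_{\alpha_0}$. Since $\F_0\unlhd\F$, the subgroup $S_0$ is strongly closed in $\F$, so conjugation by $t\in T\leq S$ preserves $S_0$, whence
\[
(N_\vphi\cap S_0)^t \;=\; N_\vphi^t\cap S_0 \;\leq\; N_{\alpha_0}\cap S_0.
\]
The image $P_0^t\alpha_0=P_0\vphi$ is fully $\F_0$-normalized by hypothesis, and hence fully automized in $\F_0$. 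Applying Remark~\ref{NphicapR} with $\m{E}=\F_0$ and $R=S_0$ yields the identification $N_{\alpha_0}^{\F_0}=N_{\alpha_0}\cap S_0$. The extension axiom for the saturated fusion system $\F_0$ then supplies $\hat{\alpha}_0\in\Hom_{\F_0}(N_{\alpha_0}\cap S_0,S_0)$ extending $\alpha_0$.

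To produce the desired $\hat{\vphi}$, I would simply compose: set $\hat{\vphi}:=(c_t|_{N_\vphi\cap S_0})\,\hat{\alpha}_0$, where the first factor sends $N_\vphi\cap S_0$ into $(N_\vphi\cap S_0)^t\leq N_{\alpha_0}\cap S_0$, as established above. Since $\m{D}$ is a subsystem of $\F$ on $T$, the conjugation $c_t|_{N_\vphi\cap S_0}$ is a morphism of $\m{D}$; combined with $\hat{\alpha}_0\in\F_0\subseteq\m{D}$, this shows $\hat{\vphi}\in\Hom_{\m{D}}(N_\vphi\cap S_0,S_0)$. Restricting to $P_0$ gives $c_t\alpha_0=\vphi$, so $\hat{\vphi}$ is the required extension.

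The only conceptual point is the reduction of the $\m{D}$-extension problem to an $\F_0$-extension problem. What makes this work is precisely the identification $N_{\alpha_0}^{\F_0}=N_{\alpha_0}\cap S_0$ provided by Remark~\ref{NphicapR}: without it, one cannot directly invoke the extension axiom of $\F_0$ on the full subgroup $N_\vphi\cap S_0$. Once the decomposition from Remark~\ref{Decompose} is in hand, the remainder of the argument is essentially bookkeeping.
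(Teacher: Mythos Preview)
Your proof is correct and follows essentially the same route as the paper's: decompose $\vphi=c_t\alpha_0$ via Remark~\ref{Decompose}, use Remark~\ref{NphicapR} together with the fact that $P_0\vphi$ is fully $\F_0$-automized to identify $N_{\alpha_0}^{\F_0}=N_{\alpha_0}\cap S_0$, apply the extension axiom in $\F_0$, and precompose the resulting extension with $c_t$. The only cosmetic difference is that you extend $\alpha_0$ all the way to $N_{\alpha_0}\cap S_0$ and then restrict, whereas the paper extends directly from the smaller subgroup $(N_\vphi\cap S_0)^t$; this changes nothing.
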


\begin{proof}
By \ref{Decompose}, we have $\vphi=c_t\vphi_0$ for some $t\in T$ and $\vphi_0\in \Hom_{\F_0}(P_0^t,P_0\vphi)$. Moreover, $N_\vphi^t\leq N_{\vphi_0}$. Since $P_0\vphi\in\F_0^f$, $P_0\vphi$ is fully $\F_0$-automized, so by \ref{NphicapR}, $(N_\vphi \cap S_0)^t\leq N_{\vphi_0}^{\F_0}$. Hence, as $\F_0$ is saturated, $\vphi_0$ extends to $\hat{\vphi}_0\in \Hom_{\F_0}((N_\vphi\cap S_0)^t,S_0)$. Thus, $\hat{\vphi}=c_t\hat{\vphi}_0\in \Hom_\m{D}(N_\vphi\cap S_0,S_0)$ extends $\vphi$.  
\end{proof}

\begin{definition}\label{WellPlacedDef}
Let  $P_0\leq S_0$. Set $N_0:=P_0$ and $N_{i+1}:=N_{S_0}(N_i)$ for $i\geq 0$. Then we call $P_0$ \textbf{well-placed} if for all $i\geq 0$, the following conditions hold:
\begin{itemize}
 \item [(i)] $N_i\in \F_0^f$.
 \item [(ii)] $\Aut_T(N_i)\in \Syl_p(\Aut_\m{D}(N_i))$, and
 \item [(iii)] $\N_{\Aut_T(N_{i+1})}(N_i)\in \Syl_p(\N_{\Aut_{\m{D}}(N_{i+1})}(N_i))$.
\end{itemize}
\end{definition}

\begin{lemma}\label{wellplacedExists}
 Let $Q_0\leq S_0$. Then there exists $P_0\in Q_0^\m{D}$ such that $P_0$ is well-placed.
\end{lemma}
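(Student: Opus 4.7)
The plan is to argue by induction on $|S_0:Q_0|$. Before starting, I replace $Q_0$ by a $\mathcal{D}$-conjugate $Q_0^\circ\in Q_0^{\mathcal{D}}$ maximizing $|N_{S_0}(\cdot)|$ on its $\mathcal{D}$-class; this is harmless because $(Q_0^\circ)^{\mathcal{D}}=Q_0^{\mathcal{D}}$ and $|S_0:Q_0^\circ|=|S_0:Q_0|$, and after the replacement I may assume $|N_{S_0}(P)|\leq|N_{S_0}(Q_0)|$ for every $P\in Q_0^{\mathcal{D}}$.

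For the base case $Q_0=S_0$, I take $P_0:=S_0$. Then $N_i=S_0$ for all $i\geq 0$, condition (i) is trivial, and (iii) collapses into (ii). To verify (ii), I apply Remark~\ref{Decompose} together with $S_0\unlhd S$ (since $S_0$ is strongly closed in $\mathcal{F}$) to write each $\alpha\in\Aut_\mathcal{D}(S_0)$ as $c_t\alpha_0$ with $t\in T$ and $\alpha_0\in\Aut_{\mathcal{F}_0}(S_0)$, obtaining $\Aut_\mathcal{D}(S_0)=\Aut_T(S_0)\,\Aut_{\mathcal{F}_0}(S_0)$. Then saturation of $\mathcal{F}_0$ (giving $\Aut_{S_0}(S_0)\in\Syl_p(\Aut_{\mathcal{F}_0}(S_0))$), the inclusion $\Aut_{S_0}(S_0)\leq\Aut_T(S_0)$, and a short $p$-part count through the normal subgroup $\Aut_{\mathcal{F}_0}(S_0)\unlhd\Aut_\mathcal{D}(S_0)$ yield $\Aut_T(S_0)\in\Syl_p(\Aut_\mathcal{D}(S_0))$.

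For the inductive step I set $R_0:=N_{S_0}(Q_0)$; since $R_0>Q_0$ one has $|S_0:R_0|<|S_0:Q_0|$, and the induction hypothesis supplies a well-placed $R^*\in R_0^{\mathcal{D}}$. Picking an isomorphism $\alpha\in\Hom_\mathcal{D}(R_0,R^*)$ and setting $Q':=Q_0\alpha$, one has $Q'\unlhd R^*$. Using $\Aut_T(R^*)\in\Syl_p(\Aut_\mathcal{D}(R^*))$---which is condition (ii) at level~$0$ for $R^*$---a standard Sylow-theoretic argument inside $\Aut_\mathcal{D}(R^*)$ produces $\psi\in\Aut_\mathcal{D}(R^*)$ with $P_0:=Q'\psi$ satisfying $N_{\Aut_T(R^*)}(P_0)\in\Syl_p(N_{\Aut_\mathcal{D}(R^*)}(P_0))$. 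Since $P_0\unlhd R^*$, one has $R^*\leq N_{S_0}(P_0)$; combined with the maximality arranged at the outset this forces $|N_{S_0}(P_0)|\leq|R_0|=|R^*|$ and hence $N_{S_0}(P_0)=R^*$. The iterated-normalizer chain of $P_0$ therefore reads $P_0,R^*,N_{S_0}(R^*),\dots$ and coincides with the well-placed chain of $R^*$ from level~$1$ onward, so the conditions (i), (ii), (iii) of well-placedness for $P_0$ at all levels $i\geq 1$ are inherited from $R^*$. At level $i=0$, condition~(i) follows because $|N_{S_0}(P_0)|=|R^*|$ is maximal on $Q_0^{\mathcal{D}}$ and hence on $P_0^{\mathcal{F}_0}\subseteq Q_0^{\mathcal{D}}$, and condition~(iii) is precisely the Sylow choice of $\psi$.

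The main obstacle is condition~(ii) at level $i=0$, namely $\Aut_T(P_0)\in\Syl_p(\Aut_\mathcal{D}(P_0))$. My plan is to show that every $p$-element $\sigma\in\Aut_\mathcal{D}(P_0)$ is $\Aut_\mathcal{D}(P_0)$-conjugate into $\Aut_T(P_0)$ by lifting it to $N_{\Aut_\mathcal{D}(R^*)}(P_0)$ and then invoking the Sylow condition from~(iii) at level~$0$. Since $P_0\in\mathcal{F}_0^f$, Lemma~\ref{Lemma0} extends $\sigma$ to some $\hat\sigma$ defined on $N_\sigma\cap S_0$; the delicate step---where I expect the bulk of the work---is to show that, after replacing $\sigma$ by a suitable $\Aut_\mathcal{D}(P_0)$-conjugate, one has $R^*\subseteq N_\sigma$ (equivalently, via Remark~\ref{littleLemma}, $\Aut_{R^*}(P_0)^{\sigma}\leq\Aut_S(P_0)$). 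Once $\hat\sigma|_{R^*}$ is seen to lie in $N_{\Aut_\mathcal{D}(R^*)}(P_0)$, passing to its $p$-part and applying Sylow's theorem with the Sylow subgroup supplied by (iii) produces a conjugate in $N_{\Aut_T(R^*)}(P_0)$; restriction to $P_0$ then completes the argument.
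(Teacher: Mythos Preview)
Your overall architecture—induction on $|S_0:Q_0|$, moving $R_0=N_{S_0}(Q_0)$ to a well-placed conjugate $R^*$, making the Sylow choice of $\psi$ inside $\Aut_\m{D}(R^*)$, and inheriting levels $i\geq 1$ from $R^*$—matches the paper's proof. Your device of maximizing $|N_{S_0}(\cdot)|$ over the full $\m{D}$-class at the outset is a clean alternative to the paper's route (first put $Q_0\in\F_0^f$, then invoke Lemma~\ref{PrelI}(b) to preserve this while mapping $R_0$ to a well-placed conjugate); both yield $N_{S_0}(P_0)=R^*$.

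The gap is in your handling of condition~(ii) at level~$0$. Showing that every individual $p$-element $\sigma\in\Aut_\m{D}(P_0)$ is $\Aut_\m{D}(P_0)$-conjugate into $\Aut_T(P_0)$ does \emph{not} imply $\Aut_T(P_0)\in\Syl_p(\Aut_\m{D}(P_0))$. For instance, in $A_4$ every $2$-element is conjugate into any given subgroup of order~$2$, yet such a subgroup is not Sylow. Your element-by-element lifting therefore cannot conclude the Sylow statement you need, regardless of how the ``delicate step'' is resolved.

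The paper's argument for~(ii) runs as follows. Since $\F_0$ is $\F$-invariant, $\Aut_{\F_0}(P_0)\unlhd\Aut_\m{D}(P_0)$, and $P_0\in\F_0^f$ gives $\Aut_{S_0}(P_0)\in\Syl_p(\Aut_{\F_0}(P_0))$; the Frattini argument then yields $\Aut_\m{D}(P_0)=\Aut_{\F_0}(P_0)\cdot N$ with $N:=N_{\Aut_\m{D}(P_0)}(\Aut_{S_0}(P_0))$. Lemma~\ref{Lemma0} (together with Lemma~\ref{PrelI}(b) to see the extension lands back in $R^*$) shows that restriction $N_{\Aut_\m{D}(R^*)}(P_0)\to N$ is surjective, giving $N_{\Aut_\m{D}(R^*)}(P_0)/C_{\Aut_\m{D}(R^*)}(P_0)\cong N$. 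Your Sylow choice of $\psi$ thus transfers to $\Aut_T(P_0)\in\Syl_p(N)$, and since $\Aut_{S_0}(P_0)\leq\Aut_T(P_0)$, the Frattini decomposition gives $\Aut_T(P_0)\in\Syl_p(\Aut_\m{D}(P_0))$. Note that your ``delicate step'' ($R^*\subseteq N_\sigma$ after conjugation) is exactly the Frattini conjugation into $N$ and is routine; the genuine issue is that one must transport Sylow subgroups through the group isomorphism, not lift elements one at a time.
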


\begin{proof}
Let $Q_0$ be a counterexample with $|Q_0|$ maximal. We may assume $Q_0\in \F_0^f$. By construction of $\m{D}$, $\Aut_\m{D}(S_0)=\Aut_{\F_0}(S_0)\Aut_T(S_0)$. As $\F_0$ is saturated, $\Inn(S_0)\in \Syl_p(\Aut_{\F_0}(S_0))$, so $\Aut_T(S_0)\in \Syl_p(\Aut_{\m{D}}(S_0))$ and $S_0$ is well-placed. Hence, $Q_0<S_0$ and thus $Q_0<\N_{S_0}(Q_0)$. Now by maximality of $|Q_0|$, there exists $\vphi\in \Hom_{\m{D}}(\N_{S_0}(Q_0),S_0)$ such that $\N_{S_0}(Q_0)\vphi$ is well-placed. By \ref{PrelI}(b), $Q_0\vphi\in \F_0^f$ and $\N_{S_0}(Q_0)\vphi=\N_{S_0}(Q_0\vphi)$, as $Q_0\in \F_0^f$. Hence, replacing $Q_0$ by $Q_0\vphi$ we may assume that $R_0:=\N_{S_0}(Q_0)$ is well-placed. In particular, $\Aut_T(R_0)\in \Syl_p(\Aut_{\m{D}}(R_0))$ and thus there exists $\psi\in \Aut_{\m{D}}(R_0)$ such that $\Aut_T(R_0)\cap (\N_{\Aut_\m{D}(R_0)}(Q_0)\psi^*)\in \Syl_p(\N_{\Aut_\m{D}(R_0)}(Q_0)\psi^*)$. As $R_0=\N_{S_0}(Q_0)$ and $Q_0\in\F_0^f$, we have $P_0:=Q_0\psi\in\F_0^f$ and $R_0=\N_{S_0}(P_0)$. Then $\N_{\Aut_T(R_0)}(P_0)\in \Syl_p(\N_{\Aut_\m{D}(R_0)}(P_0))$. By \ref{Lemma0}, the elements of $\N_{\Aut_{\m{D}}(P_0)}(\Aut_{S_0}(P_0))$ extend to elements of $\Aut_\m{D}(R_0)$, so
$$\N_{\Aut_{\m{D}}(P_0)}(\Aut_{S_0}(P_0))\cong \N_{\Aut_\m{D}(R_0)}(P_0)/\C_{\Aut_\m{D}(R_0)}(P_0).$$
Hence, as $\N_{\Aut_T(R_0)}(P_0)\in \Syl_p(\N_{\Aut_\m{D}(R_0)}(P_0))$, $\Aut_T(P_0)\in \Syl_p(\N_{\Aut_{\m{D}}(P_0)}(\Aut_{S_0}(P_0)))$. As $P_0\in \F_0^f$ we have $\Aut_{S_0}(P_0)\in \Syl_p(\Aut_{\F_0}(P_0))$. Hence, by the Frattini argument, $\Aut_\m{D}(P_0)=\Aut_{\F_0}(P_0)\N_{\Aut_{\m{D}}(P_0)}(\Aut_{S_0}(P_0))$ and so $\Aut_T(P_0)\in \Syl_p(\Aut_\m{D}(P_0))$. Now $P_0$ is well-placed as $R_0=\N_{S_0}(P_0)$ is well-placed.
\end{proof}

\begin{lemma}\label{Lemma3}
 Let $P_0\in \F_0$ be well-placed. Then $\Aut_\m{D}(P_0)=\Aut_T(P_0)\Aut_{\F_0}(P_0)$. 
\end{lemma}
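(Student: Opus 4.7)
The plan is to induct on $|S_0 : P_0|$. Observing that the chain $N_1, \N_{S_0}(N_1), \ldots$ defining well-placedness for $N_1 := \N_{S_0}(P_0)$ is the tail of the chain for $P_0$, it follows that $N_1$ is itself well-placed. In the base case $P_0 = S_0$, we have $S_0 \unlhd T$ (since $S_0$ is strongly $\F$-closed), so $S_0^t = S_0$ for all $t \in T$; then Remark~\ref{Decompose} writes any $\alpha \in \Aut_\m{D}(S_0)$ as $c_t\alpha_0$ with $c_t|_{S_0} \in \Aut_T(S_0)$ and $\alpha_0 \in \Aut_{\F_0}(S_0)$, giving the claim. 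Otherwise $P_0 < \N_{S_0}(P_0) = N_1$ (standard fact about $p$-groups), so $|S_0 : N_1| < |S_0 : P_0|$ and the inductive hypothesis gives $\Aut_\m{D}(N_1) = \Aut_T(N_1)\Aut_{\F_0}(N_1)$.

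Next I use a Frattini argument. $\F$-invariance of $\F_0$ yields $\Aut_{\F_0}(P_0) \unlhd \Aut_\m{D}(P_0)$, and $P_0 \in \F_0^f$ gives $\Aut_{S_0}(P_0) \in \Syl_p(\Aut_{\F_0}(P_0))$. Hence
\[
\Aut_\m{D}(P_0) = \Aut_{\F_0}(P_0)\cdot\N_{\Aut_\m{D}(P_0)}(\Aut_{S_0}(P_0)),
\]
so it suffices to show that every $\vphi$ normalizing $\Aut_{S_0}(P_0)$ inside $\Aut_\m{D}(P_0)$ lies in $\Aut_T(P_0)\Aut_{\F_0}(P_0)$. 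For such $\vphi$, the normalization condition directly forces $N_1 \leq N_\vphi \cap S_0$: each $s \in N_1$ produces $c_s|_{P_0} \in \Aut_{S_0}(P_0)$, whose conjugate $(c_s|_{P_0})\vphi^*$ remains in $\Aut_{S_0}(P_0) \leq \Aut_S(P_0)$. Applying Lemma~\ref{Lemma0}, $\vphi$ extends to $\hat\vphi \in \Hom_\m{D}(N_\vphi \cap S_0, S_0)$; since $\hat\vphi|_{P_0} = \vphi$ fixes $P_0$, the image $N_1\hat\vphi$ lies in $\N_{S_0}(P_0) = N_1$, so by cardinality $\hat\vphi$ restricts to an automorphism $\tilde\vphi \in G := \N_{\Aut_\m{D}(N_1)}(P_0)$.

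Set $H := \N_{\Aut_{\F_0}(N_1)}(P_0)$ and $P^* := \N_{\Aut_T(N_1)}(P_0)$. Then $\F$-invariance gives $H \unlhd G$, and the well-placed condition (iii) gives $P^* \in \Syl_p(G)$. The inductive hypothesis for $N_1$ makes $\Aut_\m{D}(N_1)/\Aut_{\F_0}(N_1) \cong \Aut_T(N_1)/(\Aut_T(N_1) \cap \Aut_{\F_0}(N_1))$ a $p$-group, and $G/H$ embeds into this quotient via $gH \mapsto g\Aut_{\F_0}(N_1)$, so $G/H$ is also a $p$-group. Combined with $P^* \in \Syl_p(G)$ and the standard fact $P^* \cap H \in \Syl_p(H)$, the index computation $|P^*H/H| = |P^*|/|P^* \cap H| = |G|_p/|H|_p = |G/H|$ yields $G = P^*H$. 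Writing $\tilde\vphi = p\cdot h$ with $p \in P^*$ and $h \in H$, both factors normalize $P_0$, so $\vphi = \tilde\vphi|_{P_0} = (p|_{P_0})(h|_{P_0}) \in \Aut_T(P_0)\Aut_{\F_0}(P_0)$, closing the induction.

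The main obstacle I anticipate is keeping the well-placed condition (iii) aligned with the Frattini--Sylow arithmetic at the $N_1$-level: one must verify that Lemma~\ref{Lemma0}'s extension lands inside $N_1$ and restricts faithfully, that the inductive hypothesis correctly forces $G/H$ to be a $p$-group, and that the final product decomposition $\tilde\vphi = p\cdot h$ restricts factor-by-factor back down to $P_0$.
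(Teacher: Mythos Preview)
Your proof is correct and follows essentially the same route as the paper's: downward induction (the paper phrases it as a maximal counterexample), the base case $P_0=S_0$, extension of $\vphi\in\N_{\Aut_\m{D}(P_0)}(\Aut_{S_0}(P_0))$ to $N_1$ via Lemma~\ref{Lemma0}, and a factorization $\N_{\Aut_\m{D}(N_1)}(P_0)=\N_{\Aut_T(N_1)}(P_0)\,\N_{\Aut_{\F_0}(N_1)}(P_0)$. The only cosmetic difference is in the last step: the paper observes directly that $O^p(\N_{\Aut_\m{D}(N_1)}(P_0))\leq O^p(\Aut_\m{D}(N_1))\leq\Aut_{\F_0}(N_1)$ and combines this with the Sylow condition~(iii), whereas you deduce that $G/H$ is a $p$-group and then do the index computation---these are equivalent.
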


\begin{proof}
 Let $P_0$ be a counterexample with $|P_0|$ maximal. By construction of $\m{D}$, $\Aut_\m{D}(S_0)=\Aut_T(S_0)\Aut_{\F_0}(S_0)$. Hence, $P_0<S_0$ and thus $P_0<P_1:=\N_{S_0}(P_0)$. As $P_0$ is well-placed, $P_1$ is well-placed. So since $|P_0|$ is maximal, $\Aut_{\m{D}}(P_1)=\Aut_T(P_1)\Aut_{\F_0}(P_1)$. In particular, $O^p(\N_{\Aut_\m{D}(P_1)}(P_0))\leq O^p(\Aut_\m{D}(P_1))\leq \Aut_{\F_0}(P_1)$. As $P_0$ is well-placed, $\N_{\Aut_T(P_1)}(P_0)\in \Syl_p(\N_{\Aut_\m{D}(P_1)}(P_0))$. Hence,
\begin{equation}\label{star}
\N_{\Aut_\m{D}(P_1)}(P_0)=\N_{\Aut_T(P_1)}(P_0)O^p(\N_{\Aut_\m{D}(P_1)}(P_0))=\N_{\Aut_T(P_1)}(P_0)\N_{\Aut_{\F_0}(P_1)}(P_0).
\end{equation}
As $P_0$ is well-placed, $P_0$ is fully $\F_0$-normalized and thus fully $\F_0$-automized. In particular, by the Frattini argument, $\Aut_{\m{D}}(P_0)=\Aut_{\F_0}(P_0)\N_{\Aut_{\m{D}}(P_0)}(\Aut_{S_0}(P_0))$. Hence, it is sufficient to show that $\N_{\Aut_{\m{D}}(P_0)}(\Aut_{S_0}(P_0))\leq \Aut_T(P_0)\Aut_{\F_0}(P_0)$. By \ref{Lemma0}, every element $\alpha\in \N_{\Aut_{\m{D}}(P_0)}(\Aut_{S_0}(P_0))$ extends to an element  $\hat{\alpha}\in \N_{\Aut_{\m{D}}(P_1)}(P_0)$. Then by (\ref{star}), $\hat{\alpha}=c_t\hat{\alpha}_0$ for some $t\in \N_T(P_0)$ and some $\hat{\alpha}_0\in \N_{\Aut_{\F_0}(P_1)}(P_0)$. Hence, $\alpha=\hat{\alpha}|_{P_0}\in \Aut_T(P_0)\Aut_{\F_0}(P_0)$. This yields the assertion.
\end{proof}

\begin{lemma}\label{Lemma4}
 Let $P_0\in\F_0$. Then $O^p(\Aut_\m{D}(P_0))=O^p(\Aut_{\F_0}(P_0))$.
\end{lemma}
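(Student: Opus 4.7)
The plan is to prove equality by a double inclusion. One direction, namely $O^p(\Aut_{\F_0}(P_0))\leq O^p(\Aut_\m{D}(P_0))$, is free: since $\F_0\subseteq \m{D}$ (both as subsystems of $\F$), we have $\Aut_{\F_0}(P_0)\leq \Aut_\m{D}(P_0)$, and in general $O^p(H)\leq O^p(G)$ whenever $H\leq G$, because $H/(H\cap O^p(G))$ embeds into the $p$-group $G/O^p(G)$. So the content is in the reverse inclusion, and the strategy is to establish it first for well-placed $P_0$ via Lemma~\ref{Lemma3}, and then to transport the conclusion along a $\m{D}$-isomorphism supplied by Lemma~\ref{wellplacedExists}.

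For the well-placed case I would argue as follows. Since $\F_0\unlhd \F$, the $\F$-invariance of $\F_0$ forces $\Aut_{\F_0}(P_0)\unlhd \Aut_\F(P_0)$, hence $\Aut_{\F_0}(P_0)\unlhd \Aut_\m{D}(P_0)$, and then the characteristic subgroup $O^p(\Aut_{\F_0}(P_0))$ is also normal in $\Aut_\m{D}(P_0)$. Assuming $P_0$ is well-placed, Lemma~\ref{Lemma3} gives $\Aut_\m{D}(P_0)=\Aut_T(P_0)\Aut_{\F_0}(P_0)$, so that $\Aut_\m{D}(P_0)/\Aut_{\F_0}(P_0)$ is a quotient of the $p$-group $\Aut_T(P_0)$. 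The quotient $\Aut_\m{D}(P_0)/O^p(\Aut_{\F_0}(P_0))$ therefore sits in a short exact sequence whose kernel $\Aut_{\F_0}(P_0)/O^p(\Aut_{\F_0}(P_0))$ and whose cokernel are both $p$-groups, hence it is itself a $p$-group. This forces $O^p(\Aut_\m{D}(P_0))\leq O^p(\Aut_{\F_0}(P_0))$, giving equality in the well-placed case.

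To reduce the general case to the well-placed one, I would invoke Lemma~\ref{wellplacedExists} to pick $Q_0\in P_0^\m{D}$ well-placed together with $\vphi\in\Iso_\m{D}(P_0,Q_0)$. Conjugation by $\vphi$ is a group isomorphism $\vphi^*\colon \Aut_\m{D}(P_0)\to\Aut_\m{D}(Q_0)$ which automatically sends $O^p(\Aut_\m{D}(P_0))$ onto $O^p(\Aut_\m{D}(Q_0))$. The key step is to show that $\vphi^*$ also restricts to an isomorphism $\Aut_{\F_0}(P_0)\to\Aut_{\F_0}(Q_0)$: by Remark~\ref{Decompose} I may write $\vphi=c_t\vphi_0$ with $t\in T$ and $\vphi_0\in\Hom_{\F_0}(P_0^t,S_0)$, and $\F$-invariance of $\F_0$ implies that $(c_t)^*$ carries $\Aut_{\F_0}(P_0)$ into $\Aut_{\F_0}(P_0^t)$, while $(\vphi_0)^*$ then carries $\Aut_{\F_0}(P_0^t)$ into $\Aut_{\F_0}(Q_0)$; applying the same decomposition to $\vphi^{-1}\in\Hom_\m{D}(Q_0,S_0)$ gives the reverse inclusion. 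Hence $\vphi^*$ also sends $O^p(\Aut_{\F_0}(P_0))$ onto $O^p(\Aut_{\F_0}(Q_0))$, and the equality established for $Q_0$ pulls back to the desired equality for $P_0$.

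The main technical obstacle is precisely the last point: $\vphi$ is only a $\m{D}$-morphism, not in general an $\F_0$-morphism, so one must use both the explicit factorization of Remark~\ref{Decompose} and the $\F$-invariance built into the definition of normality of $\F_0$ in $\F$ to see that the $\F_0$-part of the automorphism group is transported along $\vphi^*$.
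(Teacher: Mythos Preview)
Your proof is correct and follows essentially the same route as the paper: reduce to a well-placed $\m{D}$-conjugate via Lemma~\ref{wellplacedExists}, invoke Lemma~\ref{Lemma3} there, and transport the equality along a $\m{D}$-isomorphism. The paper's transport step is a bit more direct---it simply cites $\F_0\unlhd\F$ to get $O^p(\Aut_{\F_0}(U_0))\beta^*=O^p(\Aut_{\F_0}(P_0))$ for any $\beta\in\Iso_\m{D}(U_0,P_0)\subseteq\Iso_\F(U_0,P_0)$, so your detour through Remark~\ref{Decompose} is valid but not needed.
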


\begin{proof}
By \ref{wellplacedExists}, we may choose $U_0\in P_0^\m{D}$ such that $U_0$ is well-placed. By \ref{Lemma3}, $O^p(\Aut_\m{D}(U_0))=O^p(\Aut_{\F_0}(U_0))$. Then for $\beta\in \Iso_\m{D}(U_0,P_0)$, we have $O^p(\Aut_\m{D}(P_0))=O^p(\Aut_\m{D}(U_0))\beta^*=O^p(\Aut_{\F_0}(U_0))\beta^*=O^p(\Aut_{\F_0}(P_0))$ as $\F_0\unlhd \F$. 
\end{proof}

\begin{lemma}\label{Ac}
 Let $P\leq T$. Then $O^p(\Aut_\m{D}(P))\leq \Ac(P)$. In particular, if $P_0\in\F_0^c$, then $O^p(\Aut_\m{D}(P))=\Ac(P)$.
\end{lemma}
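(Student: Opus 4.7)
The plan is to prove $O^p(\Aut_\m{D}(P)) \leq \Ac(P)$ by showing that every $p'$-element $\vphi \in \Aut_\m{D}(P)$ lies in $\Ac(P)$, since $O^p$ of any finite group is generated by its $p'$-elements. The ``in particular'' reverse inclusion $\Ac(P) \leq O^p(\Aut_\m{D}(P))$ when $P_0 \in \F_0^c$ will then follow from the definition of $\F_0T$ (which places $\Ac(P)$ among the generating data for $\m{D}$) together with the fact that $\Ac(P)$ is itself generated by $p'$-elements.

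Writing $\ov X := XS_0/S_0$ and $\ov T := T/S_0$, the key structural ingredient I would establish first is that every morphism $\alpha \colon A \to B$ of $\m{D}$ restricts to $A \cap S_0 \to B \cap S_0$ (by strong closure of $S_0$ in $\F$) and induces a map $\ov\alpha \colon \ov A \to \ov B$ that is conjugation by an element of $\ov T$. I would check this on both types of generators and verify that the property is preserved under composition, restriction, and inversion. On generators: for a $T$-conjugation $c_t|_Q$, the induced map is $c_{\ov t}|_{\ov Q}$; for $\vphi \in \Ac(Q)$, the defining condition $[Q,\vphi] \leq Q_0 \leq S_0$ gives $\ov\vphi = \id$, and products of such still induce the identity on the quotient. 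Closure is then routine: $c_{\ov{t_1}} c_{\ov{t_2}} = c_{\ov{t_1 t_2}}$, and restrictions and inverses of conjugations are again conjugations.

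With the structural observation in hand, let $\vphi$ be a $p'$-element of $\Aut_\m{D}(P)$. Then $\ov\vphi = c_{\ov t}|_{\ov P}$ is a $p$-element of $\Aut(\ov P)$ (as $\ov T$ is a $p$-group), and as a quotient of a $p'$-automorphism it is also a $p'$-element; hence $\ov\vphi = \id$, i.e.\ $[P, \vphi] \leq P \cap S_0 = P_0$. By strong closure, $\vphi$ restricts to a $p'$-element of $\Aut_\m{D}(P_0)$, and Lemma \ref{Lemma4} gives $\vphi|_{P_0} \in O^p(\Aut_\m{D}(P_0)) = O^p(\Aut_{\F_0}(P_0)) \leq \Aut_{\F_0}(P_0)$. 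Thus $\vphi$ meets the defining conditions of a generator of $\Ac(P)$, so $\vphi \in \Ac(P)$. The main obstacle is the structural observation itself: verifying that the class of morphisms behaving correctly on $\ov T$ is closed under all fusion-system operations so that it captures every morphism of $\m{D} = \F_0T$; once this is done, the rest of the argument falls out cleanly from Lemma \ref{Lemma4}.
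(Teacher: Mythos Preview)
Your proposal is correct and follows essentially the same approach as the paper. The paper compresses your ``structural observation'' into the single line $\Aut_{\m{D}/S_0}(PS_0/S_0)=\Aut_{T/S_0}(PS_0/S_0)$, taking for granted what you spell out on generators; after that, both arguments invoke Lemma~\ref{Lemma4} on $\vphi|_{P_0}$ to land in $\Aut_{\F_0}(P_0)$ and conclude $\vphi\in\Ac(P)$, and the reverse inclusion for $P_0\in\F_0^c$ is immediate from the definition of $\m{D}$ in both.
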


\begin{proof}
Let $\vphi\in \Aut_\m{D}(P)$ be a $p^\prime$-element. Observe that $\Aut_{\m{D}/S_0}(PS_0/S_0)=\Aut_{T/S_0}(PS_0/S_0)$ and so $\vphi$ induces a $p$-automorphism of $PS_0/S_0$. Hence, $[P,\vphi]\leq P_0$. Moreover, 
$$\vphi|_{P_0}\in O^p(\Aut_\m{D}(P_0))\leq \Aut_{\F_0}(P_0)$$ 
by \ref{Lemma4}. This proves $\vphi\in\Ac(P)$ which yields the assertion. 
\end{proof}

\section{$\m{D}=\F_0T$ is saturated}\label{S5}

To show that $\m{D}=\F_0T$ is saturated, we assume from now on that $(\F,\F_0,T)$ is a counterexample such that first $\F$ is minimal with respect to inclusion, then $\F_0$ is minimal with respect to inclusion, and then $|T|$ is maximal.

\begin{lemma}
 $T$ is strongly closed in $\F$.
\end{lemma}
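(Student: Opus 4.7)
The plan is to derive a contradiction from the assumption that $T$ is not strongly closed in $\F$, using the combined minimality of $\F$ and maximality of $|T|$ in the choice of counterexample. Let $T^*$ denote the strong closure of $T$ in $\F$; since $S_0$ is strongly closed in $\F$ (as $\F_0\unlhd\F$) and $S_0\leq T$, we have $S_0\leq T\subsetneq T^*\leq S$. By the maximality of $|T|$, the triple $(\F,\F_0,T^*)$ is not a counterexample, so $\m{D}^*:=(\F_0T^*)_\F$ is a saturated fusion system on $T^*$.

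Next I would observe that the product $\F_0T$ formed inside $\m{D}^*$ coincides with $(\F_0T)_\F$. Indeed, for any $P\leq T$ with $P_0\in\F_0^c$, Lemma~\ref{Ac} applied to $\m{D}^*=\F_0T^*$ gives $\Ac_{\F,\F_0}(P)=O^p(\Aut_{\m{D}^*}(P))$; in particular the generators $\Ac(P)$ already lie in $\Aut_{\m{D}^*}(P)$ and coincide with $\Ac_{\m{D}^*,\F_0}(P)$. So the two products are generated by the same morphisms and hence agree. A routine verification—transferring the invariance and Frattini conditions from $\F_0\unlhd\F$ via the inclusion $\F_0\leq\m{D}^*\leq\F$—also yields $\F_0\unlhd\m{D}^*$, so the triple $(\m{D}^*,\F_0,T)$ satisfies Hypothesis~\ref{MainHyp}.

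If $\m{D}^*\subsetneq\F$, then by the minimality of $\F$ among counterexamples, $(\m{D}^*,\F_0,T)$ is not itself a counterexample, so $(\F_0T)_{\m{D}^*}=(\F_0T)_\F$ is saturated, contradiction. The only escape is $\m{D}^*=\F$, which forces $T^*=S$ and $\F=\F_0S$. I would rule this out via the quotient $\F/S_0$: applying Lemma~\ref{Ac} with $T=S$, every $\Ac(Q)$-element acts trivially modulo $S_0$, so since $\F=\F_0S$ is generated by the $\Ac(Q)$'s together with inner $S$-conjugations, the induced fusion system on $\bar S:=S/S_0$ is $\F/S_0=\F_{\bar S}(\bar S)$, the trivial $p$-group fusion system. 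The strong closure of $\bar T:=T/S_0$ in $\F_{\bar S}(\bar S)$ is its normal closure in $\bar S$, which is a proper subgroup whenever $\bar T<\bar S$ (since $\bar S$ is a $p$-group, every proper subgroup lies in a normal maximal subgroup). But $T^*=S$ would force $\bar T=\bar S$ and hence $T=S$, contradicting that $T$ is not strongly closed.

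The main obstacle I anticipate is the edge case $\m{D}^*=\F$ in the last paragraph: pinning down that $\F/S_0=\F_{\bar S}(\bar S)$ requires carefully tracing the action on quotients of arbitrary $\F_0S$-morphisms written as composites of the defining generators, and checking that the descent is well-defined. A secondary technical point is the verification $\F_0\unlhd\m{D}^*$, though I expect this to follow routinely from the normality conditions being inherited by the intermediate saturated subsystem $\m{D}^*$.
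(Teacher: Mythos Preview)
Your argument is correct, but the paper's route is considerably shorter because of a different choice of the larger subgroup. Instead of the strong closure $T^*$, the paper takes $T_1:=\N_S(T)$. Since $T\unlhd T_1$ and every generator $\phi\in\Ac(P)$ of $\F_0T_1$ satisfies $[P,\phi]\leq P_0\leq S_0\leq T$, one sees at once that $T$ is strongly closed in $\F_0T_1$. Thus the case $\F=\F_0T_1$ is immediate, and otherwise minimality of $\F$ applied to $(\F_0T_1,\F_0,T)$ gives the contradiction exactly as you do. Your approach pays for the more ``canonical'' choice $T^*$ with the extra endgame: when $\m{D}^*=\F$ you must still manufacture a proper strongly closed overgroup of $T$, which you do via the quotient $\F/S_0$. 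That works (the preimage of the normal closure of $\bar T$ in $\bar S$ is strongly closed in $\F=\F_0S$ since every $\Ac$-generator acts trivially modulo $S_0$ and $R\unlhd S$), but it is precisely the step the paper's choice of $T_1$ renders unnecessary.

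Two minor remarks on your write-up. First, both your proof and the paper's implicitly use $\F_0\unlhd\m{D}'$ for the intermediate system $\m{D}'$ (your $\m{D}^*$, the paper's $\F_0T_1$); you flag this as routine, which is fair: conditions (i)--(iii) are inherited from $\F_0\unlhd\F$, and the extension condition (iv) follows by restricting the extension $\bar\alpha\in\Aut_\F(S_0\C_S(S_0))$ to $S_0\C_{T'}(S_0)$ and observing that its $p'$-part lies in $\Ac(S_0\C_{T'}(S_0))$. Second, your final sentence ``$T^*=S$ would force $\bar T=\bar S$'' is the contrapositive of what you actually argue (if $\bar T<\bar S$ then the preimage of its normal closure is a proper strongly closed overgroup of $T$, so $T^*<S$); spelling that out would make the logic clearer.
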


\begin{proof}
If $T=S$ then we are done. Thus we may assume that $T<S$ and thus $T<T_1:=\N_S(T)$. Then the maximality of $|T|$ implies that $\F_0T_1$ is saturated. Observe that $T$ is strongly closed in $\F_0T_1$. Therefore, we may assume $\F\neq \F_0T_1$. Then by the minimality of $\F$, $\F_0T=(\F_0T)_{\F_0T_1}$ is saturated, contradicting $(\F,\F_0,T)$ being a counterexample.
\end{proof}

\begin{notation}\label{MainNot}
Set $\m{D}:=\F_0T$. For $U_0\in \F_0^c\cap \F^f$ set
$$\m{D}(U_0):=\N_{N_\F(U_0\C_S(U_0))}(U_0)\mbox{ and }\m{E}(U_0):=\N_{\F_0}(U_0).$$
It follows from \cite[Thm.~2]{AschbacherNormal} that $\m{D}(U_0)$ and $\m{E}(U_0)$ are constrained saturated subsystems of $\F$ with $\m{E}(U_0)\unlhd \m{D}(U_0)$. Hence, by \cite[Thm.~1]{AschbacherNormal} we may choose models $G(U_0)$ and $N(U_0)$ of $\m{D}(U_0)$ respectively $\m{E}(U_0)$ such that $N(U_0)$ is contained in $G(U_0)$ as a normal subgroup.

\smallskip

For $P_0\in \F_0^{fc}$ and $\alpha\in \Hom_\F(\N_S(P_0),S)$ with $P_0\alpha\in \F^f$ set $H(P_0,\alpha):=N(P_0\alpha)(\N_T(P_0)\alpha)\leq G(P_0\alpha)$ and 
$$\m{N}(P_0,\alpha):=\alpha(\F_{\N_T(P_0)\alpha}(H(P_0,\alpha)))\alpha^{-1}.$$
For every $P_0\in \F_0^{fc}$ set
$$\mathfrak{N}(P_0):=\{\m{N}(P_0,\alpha):\alpha\in \Hom_\F(\N_S(P_0),S)\mbox{ such that }P_0\alpha\in \F^f\}.$$
\end{notation}

Note that for $P_0\in\F_0^{fc}$ and $\alpha\in\Hom_\F(N_S(P_0),S)$, $P_0\alpha\in\F_0^c$ by \ref{PrelI}(a), so $G(P_0\alpha)$ and $N(P_0\alpha)$ exist, and $H(P_0,\alpha)$ is well-defined. In fact, for the definition of $H(P_0,\alpha)$ and $\m{N}(P_0,\alpha)$, it would not by necessary to assume $P_0\in\F_0^f$, but this is only to ensure that $\m{N}(P_0,\alpha)$ is saturated, as we prove in detail in the next lemma.

\smallskip

We will use from now on without reference that, by \cite[Lemma~II.3.1]{Aschbacher/Kessar/Oliver:2011a}, for any $P_0\in\F_0^{fc}$, there exists $\alpha\in \Hom_\F(\N_S(P_0),S)$ such that $P_0\alpha\in\F^f$ and in particular, 
$$\mathfrak{N}(P_0)\neq \emptyset.$$ 
(In fact, by \cite[8.3.3]{AschbacherGeneralized}, $|\mathfrak{N}(P_0)|=1$. However, this property will not be needed in our proof.)

\begin{lemma}\label{ElPropN(P_0)}
Let $P_0\in \F_0^{fc}$ and $\m{N}\in\mathfrak{N}(P_0)$. Then $\m{N}$ is a saturated subsystem of $\m{D}$ on $\N_T(P_0)$. Moreover, $\N_{\F_0}(P_0)\unlhd \m{N}$,  $P_0\unlhd \m{N}$ and $O^p(\Aut_{\m{N}}(R))\leq \Ac(R)$ for every $R\leq \N_T(P_0)$.
\end{lemma}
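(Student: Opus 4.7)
The plan is to fix $\alpha\in\Hom_\F(\N_S(P_0),S)$ with $P_0\alpha\in\F^f$ and $\m{N}=\m{N}(P_0,\alpha)$, and to deduce the four conclusions by transferring the group-theoretic structure of the finite group $H:=H(P_0,\alpha)=N(P_0\alpha)(\N_T(P_0)\alpha)$ sitting inside $G(P_0\alpha)$ over to $\m{D}$, using the normality $N(P_0\alpha)\unlhd H$ together with $\F_0\unlhd\F$.

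First, I would verify that $R:=\N_T(P_0)\alpha$ is a Sylow $p$-subgroup of $H$. Since $\F_0\unlhd\F$, the subgroup $S_0$ is strongly $\F$-closed, so $\N_{S_0}(P_0)\alpha\leq\N_{S_0}(P_0\alpha)$; combined with $P_0\in\F_0^f$, Lemma~\ref{PrelI}(b) upgrades this to the equality $\N_{S_0}(P_0)\alpha=\N_{S_0}(P_0\alpha)$, hence $\N_{S_0}(P_0\alpha)\leq R$. Since $\N_{S_0}(P_0\alpha)\in\Syl_p(N(P_0\alpha))$, the product formula applied to $H=N(P_0\alpha)R$ gives $R\in\Syl_p(H)$, so $\F_R(H)$ is saturated and hence so is $\m{N}=\alpha\F_R(H)\alpha^{-1}$ on $\N_T(P_0)$. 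The two normality statements then follow: $P_0\alpha\unlhd N(P_0\alpha)$ (since $N(P_0\alpha)$ models $\N_{\F_0}(P_0\alpha)$) and $P_0\alpha\unlhd R$ by Remark~\ref{littleLemma}, giving $P_0\alpha\unlhd H$ and thus $P_0\unlhd\m{N}$; and $\F_{\N_{S_0}(P_0\alpha)}(N(P_0\alpha))=\N_{\F_0}(P_0\alpha)$ is normal in $\F_R(H)$ by \cite[Prop.~I.6.2]{Aschbacher/Kessar/Oliver:2011a}, which via the $\alpha$-induced isomorphism (using the $\F$-invariance of $\F_0$) transports to $\N_{\F_0}(P_0)\unlhd\m{N}$.

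The bulk of the work is $\m{N}\subseteq\m{D}$. By Alperin's fusion theorem for the saturated system $\m{N}$, it is enough to show that $\Aut_\m{N}(Q)\leq\Aut_\m{D}(Q)$ for every $Q\leq\N_T(P_0)$. Every such automorphism has the form $\alpha c_h\alpha^{-1}|_Q$ for some $h\in\N_H(Q\alpha)$. Factoring $h=nt$ with $n\in N(P_0\alpha)$ and $t\in R$, the identity $(Q\alpha)^h=Q\alpha$ forces $(Q\alpha)^n=(Q\alpha)^{t^{-1}}\leq R$, so $c_h|_{Q\alpha}=c_n|_{Q\alpha}\cdot c_t|_{(Q\alpha)^n}$. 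After conjugation by $\alpha^{-1}$ the second factor becomes $c_{t\alpha^{-1}}$ with $t\alpha^{-1}\in\N_T(P_0)\leq T$, which lies in $\F_T(T)\subseteq\m{D}$. For the first factor one uses that $n$ is trivial modulo $N(P_0\alpha)$, so $[Q\alpha,n]\leq N(P_0\alpha)\cap R=\N_{S_0}(P_0\alpha)$; combined with strong closure of $S_0$ and the $\F$-invariance of $\F_0$, this identifies $\alpha c_n\alpha^{-1}|_Q$ as a morphism of $\m{D}$. Finally, part (4) is immediate: $\m{N}\subseteq\m{D}$ gives $O^p(\Aut_\m{N}(Q))\leq O^p(\Aut_\m{D}(Q))\leq\Ac(Q)$ by Lemma~\ref{Ac}.

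The main obstacle I expect is the treatment of the $c_n$-factor in the containment $\m{N}\subseteq\m{D}$. While the $c_t$-part is transparently inner in $T$, and while the restriction of $c_n$ to the $S_0$-part of $Q\alpha$ plainly lands in $\N_{\F_0}(P_0\alpha)$, one must still certify that its extension to the whole of $Q\alpha$ remains in $\m{D}$ after $\alpha$-conjugation; this is precisely where the normality $\F_0\unlhd\F$ and the resulting invariance play the decisive role.
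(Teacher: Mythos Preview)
Your treatment of saturation, of $P_0\unlhd\m{N}$, and of $\N_{\F_0}(P_0)\unlhd\m{N}$ is essentially the paper's argument. The divergence is in how you handle $\m{N}\subseteq\m{D}$ and the bound $O^p(\Aut_\m{N}(R))\leq\Ac(R)$, and here there is a genuine gap.

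You try to prove $\m{N}\subseteq\m{D}$ first, by factoring $h=nt$ and dealing with $c_n$ and $c_t$ separately, and then to deduce the $O^p$--bound from Lemma~\ref{Ac}. The $c_t$--part is fine. The $c_n$--part is not: knowing $[Q\alpha,n]\leq \N_{S_0}(P_0\alpha)$ and invoking ``strong closure of $S_0$ and $\F$-invariance of $\F_0$'' does not identify $\alpha c_n\alpha^{-1}|_Q$ as a morphism of $\m{D}$. Membership in $\m{D}$ means being a composite of restrictions of elements of $\Ac(P)$ (for $P$ with $P_0\in\F_0^c$) and $T$-conjugations. Your $n$ is an arbitrary element of $N(P_0\alpha)$; it need not normalize $Q\alpha$, and $c_n|_{Q\alpha}$ is neither a $p'$-automorphism nor visibly a restriction of one, so there is no direct route into $\Ac$. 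Strong closure and invariance only control the restriction $c_n|_{(Q\alpha)\cap S_0}$, not its extension to all of $Q\alpha$; this is exactly the obstruction you yourself flag, and it does not dissolve by citing normality.

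The paper reverses the order of the two conclusions, and this is the point. It first proves the bound $O^p(\Aut_{\m{N}}(R))\leq\Ac(R)$ \emph{for every} $R\leq\N_T(P_0)$, by passing to the local constrained system: one applies Lemma~\ref{Ac} and Proposition~\ref{Prop1Str} with $(\m{D}(P_0\alpha),\m{E}(P_0\alpha),\N_T(P_0)\alpha)$ in place of $(\F,\F_0,T)$. Since $G(P_0\alpha)$ is a genuine finite group with $N(P_0\alpha)\unlhd G(P_0\alpha)$, Proposition~\ref{Prop1Str} identifies the abstract product $\m{E}(P_0\alpha)\cdot(\N_T(P_0)\alpha)$ with $\m{N}_1=\F_{\N_T(P_0)\alpha}(H)$, and Lemma~\ref{Ac} in this local setup gives $O^p(\Aut_{\m{N}_1}(R\alpha))\leq\Ac_{\m{D}(P_0\alpha),\m{E}(P_0\alpha)}(R\alpha)\leq\Ac(R\alpha)$. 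Only then does one deduce $\m{N}\subseteq\m{D}$, via Alperin on $Q\in\m{N}^{frc}$: for such $Q$ one has $P_0\leq Q_0$, hence $Q_0\in\F_0^c$, so $\Ac(Q)\leq\Aut_\m{D}(Q)$ by the very definition of $\m{D}$, and $O^p(\Aut_\m{N}(Q))\leq\Ac(Q)$ finishes it. The group-theoretic content of Proposition~\ref{Prop1Str} is precisely what replaces your attempted bare-hands decomposition of $c_n$.
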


\begin{proof}
Let $\alpha\in \Hom_\F(\N_S(P_0),S)$ such that $P_0\alpha\in\F^f$ and $\m{N}=\m{N}(P_0,\alpha)$. By \ref{PrelI}, $P_0\alpha\in \F_0^{fc}$ and $\N_{S_0}(P_0\alpha)=\N_{S_0}(P_0)\alpha\leq \N_T(P_0)\alpha$. As $N(P_0,\alpha)$ is a model for $\m{E}(P_0\alpha)$, $\N_{S_0}(P_0\alpha)\in \Syl_p(N(P_0,\alpha))$. Hence, $\N_T(P_0)\alpha\in \Syl_p(H(P_0,\alpha))$, so $\m{N}_1:=\F_{\N_T(P_0)\alpha}(H(P_0,\alpha))$ is a saturated fusion system on $\N_T(P_0)\alpha$. In particular, $\m{N}$ is saturated, as $\alpha^{-1}$ induces an isomorphism from $\m{N}_1$ to $\m{N}$. Moreover, $P_0\alpha\unlhd H(P_0,\alpha)$, so $P_0\alpha\unlhd \m{N}_1$ and thus $P_0\unlhd \m{N}$. Note also $\alpha\N_{\F_0}(P_0\alpha)\alpha^{-1}=\N_{\F_0}(P_0)$, whence $\N_{\F_0}(P_0)\unlhd \m{N}$. 
Let  $R\leq \N_T(P_0)$. By \ref{Prop1Str} and \ref{Ac} applied with $(\m{D}(P_0\alpha),\m{E}(P_0\alpha),\m{N}_1)$ in place of $(\F,\F_0,\m{D})$, we get
$$O^p(\Aut_{\m{N}_1}(R\alpha))\leq \Ac_{\m{D}(P_0\alpha),\m{E}(P_0\alpha)}(R\alpha)\leq \Ac(R\alpha).$$
Hence, $O^p(\Aut_\m{N}(R))\leq \Ac(R\alpha)(\alpha^{-1})^*=\Ac(R)$. So it only  remains to show that $\m{N}$ is a subsystem of $\m{D}$. Let $Q\in \m{N}^{frc}$. As $P_0\unlhd \m{N}$, we have, $P_0\leq Q_0$ and so $Q_0\in \F_0^c$ as $P_0\in\F_0^c$. By what we have just shown, $O^p(\Aut_\m{N}(Q))\leq \Ac(Q)\leq \Aut_{\m{D}}(Q)$.  By Alperin's Fusion Theorem \cite[Thm.~A.10]{BrotoLeviOliver2003}, $\m{N}=\<O^p(\Aut_\m{N}(Q)):Q\in\m{N}^{frc}\>_{N_T(P_0)}$, so $\m{N}\subseteq \m{D}$. This shows (b).
\end{proof}

\begin{lemma}\label{Lemma5}
 Let $P_0\in \F_0^{fc}$ be fully $\m{D}$-automized. Then every element $\vphi\in \Aut_\m{D}(P_0)$ extends to $\hat{\vphi}\in \Hom_\m{D}(N_\vphi\cap T,T)$.
\end{lemma}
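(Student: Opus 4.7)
The plan is to exploit the saturated subsystem $\m{N} \in \mathfrak{N}(P_0)$ on $\N_T(P_0)$ constructed in \ref{MainNot}. Since $\mathfrak{N}(P_0) \neq \emptyset$, pick any $\m{N} = \m{N}(P_0,\alpha) \in \mathfrak{N}(P_0)$. By \ref{ElPropN(P_0)}, $\m{N}$ is saturated, $P_0 \unlhd \m{N}$, and $\N_{\F_0}(P_0) \unlhd \m{N}$; in particular $\Aut_{\F_0}(P_0) \leq \Aut_\m{N}(P_0)$ and $\Aut_T(P_0) = \Aut_{\N_T(P_0)}(P_0) \leq \Aut_\m{N}(P_0)$. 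Unwinding the construction of $\m{N}$ via the model $H(P_0,\alpha) = N(P_0\alpha)(\N_T(P_0)\alpha)$ (and pulling back by $\alpha$, using that $N(P_0\alpha)$ is a model for $\N_{\F_0}(P_0\alpha)$), one gets in fact equality
$$\Aut_\m{N}(P_0) = \Aut_{\F_0}(P_0)\Aut_T(P_0).$$

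The key step is to identify $\Aut_\m{N}(P_0)$ with the full group $\Aut_\m{D}(P_0)$. For this I would combine the hypothesis that $P_0$ is fully $\m{D}$-automized with \ref{Lemma4}: since $O^p(\Aut_\m{D}(P_0)) = O^p(\Aut_{\F_0}(P_0)) \leq \Aut_{\F_0}(P_0)$ and $\Aut_T(P_0) \in \Syl_p(\Aut_\m{D}(P_0))$, one obtains $\Aut_\m{D}(P_0) = \Aut_{\F_0}(P_0)\Aut_T(P_0) = \Aut_\m{N}(P_0)$. In particular $\vphi \in \Aut_\m{N}(P_0)$.

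Now saturation of $\m{N}$ is applied. Since $P_0 \unlhd \m{N}$, $P_0$ is fully $\m{N}$-centralized, and because $\Aut_T(P_0) \in \Syl_p(\Aut_\m{N}(P_0))$, $P_0$ is fully $\m{N}$-automized. By the extension axiom for $\m{N}$, $\vphi$ extends to $\hat\vphi \in \Hom_\m{N}(N_\vphi^\m{N}, \N_T(P_0))$. By \ref{NphicapR} applied to $\m{E} = \m{N}$ on $\N_T(P_0)$, and since $N_\vphi \leq \N_S(P_0)$ forces $N_\vphi \cap T \leq \N_T(P_0)$,
$$N_\vphi^\m{N} = N_\vphi \cap \N_T(P_0) = N_\vphi \cap T.$$
Since $\m{N} \subseteq \m{D}$, this yields the desired extension $\hat\vphi \in \Hom_\m{D}(N_\vphi \cap T, T)$.

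The only real obstacle is the identification $\Aut_\m{N}(P_0) = \Aut_\m{D}(P_0)$; once this is in place, everything reduces to the extension axiom for the auxiliary saturated system $\m{N}$, which has been packaged precisely so that $P_0$ sits normally inside it. This is one of the main payoffs of the well-placed / $\mathfrak{N}(P_0)$ machinery: it lets us sidestep the lack of a priori saturation for $\m{D}$ when extending morphisms above $P_0$.
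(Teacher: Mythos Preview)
Your proposal is correct and follows essentially the same route as the paper: pick $\m{N}\in\mathfrak{N}(P_0)$, use \ref{Lemma4} together with the full-$\m{D}$-automization hypothesis to identify $\Aut_\m{D}(P_0)=\Aut_{\F_0}(P_0)\Aut_T(P_0)=\Aut_\m{N}(P_0)$, and then invoke the extension axiom in $\m{N}$ after identifying $N_\vphi\cap T$ with $N_\vphi^\m{N}$ via \ref{NphicapR}. The only cosmetic difference is that your ``unwinding the model'' step to obtain $\Aut_\m{N}(P_0)=\Aut_{\F_0}(P_0)\Aut_T(P_0)$ is unnecessary: since $\m{N}\subseteq\m{D}$ (by \ref{ElPropN(P_0)}) one already has $\Aut_\m{N}(P_0)\leq\Aut_\m{D}(P_0)$, and the reverse inclusion comes for free once $\Aut_\m{D}(P_0)=\Aut_{\F_0}(P_0)\Aut_T(P_0)$ is established.
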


\begin{proof}
Let $\m{N}\in\mathfrak{N}(P_0)$. By \ref{ElPropN(P_0)}, $\m{N}$ is a saturated subsystem of $\m{D}$ on $\N_S(P_0)$ with $P_0\unlhd \m{N}$ and $\Aut_{\F_0}(P_0)\leq \Aut_\m{N}(P_0)$. Using \ref{Lemma4} and the fact that $P_0$ is fully $\m{D}$-automized, we obtain $\Aut_\m{D}(P_0)=\Aut_{\F_0}(P_0)\Aut_T(P_0)=\Aut_\m{N}(P_0)$. By \ref{NphicapR}, applied with $(T,\m{D})$ in place of $(R,\m{E})$, $\Aut_{N_\vphi\cap T}(P_0)\vphi^*\leq \Aut_T(P_0)=\Aut_{\N_T(P_0)}(P_0)$ and thus $N_\vphi\cap T\leq N_\vphi^\m{N}$, for all $\vphi\in \Aut_\m{D}(P_0)$. Now the assertion follows from the fact that $\m{N}$ is saturated.
\end{proof}

\begin{remark}\label{RemFullAut}
 Let $P\leq T$ be fully $\m{D}$-automized and $Q\in P^\m{D}$. Then there exists $\alpha\in\Iso_\m{D}(Q,P)$ such that $N_T(Q)=N_\alpha\cap T=N_\alpha^\m{D}$.
\end{remark}

\begin{proof}
 For $\beta\in\Iso_\m{D}(Q,P)$, $\Aut_T(Q)\beta^*$ is a $p$-subgroup of $\Aut_\m{D}(P)$. So by Sylow's Theorem, as $\Aut_T(P)\in\Syl_p(\Aut_\m{D}(P))$, there exists $\gamma\in\Aut_\m{D}(P)$ such that $\Aut_T(Q)\beta^*\gamma^*\leq \Aut_T(P)$. Then the assertion holds for $\alpha=\beta\gamma$.
\end{proof}

For the next lemma recall that a subgroup $U\leq T$ is called \textit{$\m{D}$-receptive} if for any $P\leq T$ and $\alpha\in\Iso_\m{D}(P,U)$, $\alpha$ extends to a member of $\Hom_\m{D}(N_\alpha^\m{D},T)$.

\begin{lemma}\label{P0Extend}
Let $U_0\in\F_0^c$ such that $U_0$ is well-placed. Then $U_0$ is $\m{D}$-receptive.
\end{lemma}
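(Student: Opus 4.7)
The strategy is to extend $\alpha$ in two stages: first to the $S_0$-part $Q_0 := N_\alpha^\m{D}\cap S_0$ using the $\F_0$-side of $\m{D}$, and then to the whole of $N_\alpha^\m{D}$ using the $T$-conjugation side. As preparation, I observe that $P \leq S_0$: indeed, $\alpha^{-1} \in \Hom_\m{D}(U_0, P)$ with $U_0 \leq S_0$, and since $\F_0 \unlhd \F$, the subgroup $S_0$ is strongly closed in $\F$ and hence in $\m{D}$. Well-placedness of $U_0$ gives $U_0 \in \F_0^f$ and $\Aut_T(U_0) \in \Syl_p(\Aut_\m{D}(U_0))$, so $U_0$ is fully $\m{D}$-automized; Remark~\ref{NphicapR} with $\m{E} = \m{D}$ and $R = T$ then yields $N_\alpha^\m{D} = N_\alpha \cap T$, and consequently $Q_0 = N_\alpha \cap S_0$.

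For the first stage, since $\alpha \in \Hom_\m{D}(P, S_0)$ and $P\alpha = U_0 \in \F_0^f$, Lemma~\ref{Lemma0} furnishes an extension $\hat{\alpha} \in \Hom_\m{D}(Q_0, S_0)$ of $\alpha$. Set $W_0 := Q_0 \hat{\alpha}$, so $U_0 \leq W_0 \leq S_0$.

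For the second stage I invoke Lemma~\ref{NewLemma} with $\m{E} = \m{D}$, $V_0 = U_0$, $P_0 = P$. The definition of $N_\alpha$ yields, for each $g \in N_\alpha^\m{D}$, some $s_g \in N_T(U_0)$ with $c_{s_g}|_{U_0} = (c_g|_P)\alpha^*$; the second part of the lemma then forces $s_g \in N_T(W_0)$. The first part supplies the crucial compatibility $(c_g|_{Q_0})\hat{\alpha}^* \in \Aut_{N_T(U_0)}(W_0)\,\C_{\Aut_\m{D}(W_0)}(U_0)$. I use these data to patch the assignments $g \mapsto s_g$ together with $\hat{\alpha}$ into a morphism $\tilde{\alpha} : N_\alpha^\m{D} \to T$ in $\m{D}$ extending $\hat{\alpha}$, and hence $\alpha$.

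The main obstacle is making this patching work simultaneously as a group homomorphism and as a $\m{D}$-morphism: the elements $s_g$ are only defined modulo $C_T(U_0)$, and the centralizer factor coming from the first part of Lemma~\ref{NewLemma} must be absorbed consistently into $\hat{\alpha}$. Strong closure of $T$ in $\F$ (proved at the beginning of this section) forces any $\F$-extension of $\alpha$ to carry $N_\alpha^\m{D}$ into $T$, so it suffices to produce the extension in $\F$ and then certify that it lies in $\m{D}$. To achieve the latter I plan to pass to an $\F$-conjugate $V_0 \in U_0^\F \cap \F^f$ (which is automatically in $\F_0^{fc}$ by Lemmas~\ref{PrelI} and \ref{PrelII}) and work inside a saturated subsystem $\m{N} \in \mathfrak{N}(V_0)$: by Lemma~\ref{ElPropN(P_0)} such $\m{N}$ is saturated and contained in $\m{D}$, and the required extension can be built there via its saturation axiom and then transported back via the conjugation in $\F$ moving $V_0$ to $U_0$ to produce $\tilde{\alpha}$ in $\m{D}$.
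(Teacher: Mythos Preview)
Your first stage (extending $\alpha$ to $Q_0 = N_\alpha\cap S_0$ via Lemma~\ref{Lemma0}) and your identification of Lemma~\ref{NewLemma} as the tool controlling the passage from $Q_0$ to $N_\alpha\cap T$ match the paper's opening moves. The gap is entirely in your second stage, and it is genuine.

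The plan to work inside some $\m{N}\in\mathfrak{N}(V_0)$ (or $\mathfrak{N}(U_0)$) cannot succeed as stated. Such an $\m{N}$ is a fusion system on $N_T(V_0)$; its saturation only lets you extend morphisms \emph{between subgroups of $N_T(V_0)$}. But the domain $P$ of your $\alpha$ is an arbitrary $\m{D}$-conjugate of $U_0$ and has no reason to lie in $N_T(V_0)$, so $\alpha$ is simply not a morphism in $\m{N}$. This is precisely why Lemma~\ref{Lemma5}, which \emph{does} use $\mathfrak{N}(P_0)$, is restricted to \emph{automorphisms} of $P_0$; the general receptivity statement cannot be reduced to it in one step. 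Your ``transport back via the conjugation in~$\F$'' would conjugate a morphism of $\m{N}\subseteq\m{D}$ by an $\F$-isomorphism that is not known to lie in $\m{D}$, so the result would be in $\F$ but not in $\m{D}$; the ``certify it lies in $\m{D}$'' step has no mechanism behind it.

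What is missing is the inductive structure that the paper supplies. The paper argues by contradiction with $|U_0|$ maximal, so that every well-placed subgroup strictly larger than $U_0$ is already $\m{D}$-receptive. With $\hat{\alpha}:Q_0\to W_0$ in hand, one passes to a well-placed $\m{D}$-conjugate $\tilde{R}_0$ of $W_0$ (via Lemma~\ref{wellplacedExists}), carefully chooses the conjugating map $\beta$ so that $N_T(U_0)\cap N_T(W_0)\le N_\beta$, and then absorbs the centralizer factor $\C_{\Aut_\m{D}(\tilde{R}_0)}(U_0\beta)$ from Lemma~\ref{NewLemma} by a further $\delta$. Since $|\tilde{R}_0|>|U_0|$, the inductive hypothesis extends $\hat{\alpha}\beta\delta$ to $N_\alpha\cap T$; a second application of the inductive hypothesis (to $\beta$) lets one undo the detour. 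This double induction, together with the reduction of the residual automorphism case to Lemma~\ref{Lemma5}, is the actual content of the proof; your ``patching'' paragraph names the obstacle (the $s_g$ are only defined modulo $C_T(U_0)$) but does not supply the idea that overcomes it.
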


\begin{proof}
Assume the assertion is wrong and let $U_0$ be a counterexample such that $|U_0|$ is maximal. We show first:
\begin{eqnarray}\label{P0Extend2}
&&\mbox{Let $P_0\in U_0^\m{D}$ and $\alpha\in \Iso_\m{D}(P_0,U_0)$ such that $ P_0<N_\alpha\cap S_0$.}\\ \nonumber 
&&\mbox{Then $\alpha$ extends to a member of $\Hom_\m{D}(N_\alpha\cap T,T)$.}
\end{eqnarray}
We prove (\ref{P0Extend2}) by contradiction. Let $(P_0,\alpha)$ be a counterexample to (\ref{P0Extend2}) such that $|N_\alpha\cap S_0|$ is maximal. Set $Q_0:=N_\alpha\cap~S_0$. As $U_0$ is well-placed, $U_0$ is fully $\F_0$-normalized, so by \ref{Lemma0}, $\alpha$ extends to $\hat{\alpha}\in \Hom_{\m{D}}(Q_0,S_0)$. Set $R_0:=Q_0\hat{\alpha}$. By \ref{wellplacedExists}, there exists $\tilde{R}_0\in R_0^{\m{D}}$ such that $\tilde{R}_0$ is well-placed. Let $\beta\in \Hom_{\m{D}}(R_0,\tilde{R}_0)$. As $\Aut_T(\tilde{R}_0)\in \Syl_p(\Aut_\m{D}(\tilde{R}_0))$, there exists $\vphi\in \Aut_\m{D}(\tilde{R}_0)$ such that 
\begin{eqnarray*}
\Aut_{N_T(U_0\beta\vphi)}(\tilde{R}_0)&=&\Aut_T(\tilde{R}_0)\cap (\N_{\Aut_{\m{D}}(\tilde{R}_0)}(U_0\beta)\vphi^*)\\
&\in& \Syl_p(\N_{\Aut_{\m{D}}(\tilde{R}_0)}(U_0\beta)\vphi^*)=\Syl_p(\N_{\Aut_\m{D}(\tilde{R}_0)}(U_0\beta\vphi)).
\end{eqnarray*}
So replacing $\beta$ by $\beta\vphi$, we may assume that $\Aut_{\N_T(U_0\beta)}(\tilde{R}_0)\in \Syl_p(\N_{\Aut_\m{D}(\tilde{R}_0)}(U_0\beta)).$ 
Then as $\Aut_{\N_T(U_0)}(R_0)\beta^*\leq \N_{\Aut_\m{D}(\tilde{R}_0)}(U_0\beta)$, there exists $\psi\in \N_{\Aut_\m{D}(\tilde{R}_0)}(U_0\beta)$ such that 
$$\Aut_{\N_T(U_0)}(R_0)\beta^*\psi^*\leq \Aut_{\N_T(U_0\beta)}(\tilde{R}_0).$$ 
Therefore, replacing $\beta$ by $\beta\psi$ we may assume 
$$\N_T(U_0)\cap \N_T(R_0)\leq N_\beta.$$
If $R_0=\N_{S_0}(U_0)$ then, as $U_0$ is well-placed, $R_0$ is also well-placed and, by \ref{WellPlacedDef}(iii), 
$$\Aut_{\N_T(U_0)}(R_0)\in \Syl_p(\N_{\Aut_\m{D}(R_0)}(U_0)).$$ 
Hence, in this case we can and will choose $\tilde{R}_0=R_0$ and $\beta=\id_{R_0}$.

\smallskip

As $U_0$ is well-placed, $U_0$ is fully $\m{D}$-automized. Thus, by \ref{NphicapR}, $N_\alpha\cap T=N_\alpha^\m{D}$. 
Now by \ref{NewLemma} applied with $(\m{D},\alpha\beta|_{U_0},\hat{\alpha}\beta,U_0\beta)$ in place of $(\m{E},\alpha,\hat{\alpha},V_0)$, we have 
$$\Aut_{N_\alpha\cap T}(Q_0)\hat{\alpha}^*\beta^*\leq \Aut_{\N_T(U_0\beta)}(\tilde{R}_0)\C_{\Aut_{\m{D}}(\tilde{R}_0)}(U_0\beta).$$ 
Moreover, as $\Aut_{\N_T(U_0\beta)}(\tilde{R}_0)\in \Syl_p(\N_{\Aut_\m{D}(\tilde{R}_0)}(U_0\beta))$, it follows 
$$\Aut_{\N_T(U_0\beta)}(\tilde{R}_0)\in \Syl_p(\Aut_{\N_T(U_0\beta)}(\tilde{R}_0)\C_{\Aut_{\m{D}}(\tilde{R}_0)}(U_0\beta)).$$
Hence, there exists $\delta\in \C_{\Aut_\m{D}(\tilde{R}_0)}(U_0\beta)$ such that $\Aut_{N_\alpha\cap T}(Q_0)\hat{\alpha}^*\beta^*\delta^*\leq \Aut_{\N_T(U_0\beta)}(\tilde{R}_0)$. Then $N_\alpha\cap T\leq N_{\hat{\alpha}\beta\delta}$. Since $(P_0,\alpha)$ is a counterexample to (\ref{P0Extend2}), $P_0<Q_0$ and thus $|\tilde{R}_0|>|U_0|$. Therefore, as $U_0$ is a counterexample to \ref{P0Extend} with $|U_0|$ maximal, $\hat{\alpha}\beta\delta$ extends to $\gamma\in \Hom_\m{D}(N_\alpha\cap T,T)$. Then $\gamma|_{P_0}=(\hat{\alpha}\beta)|_{P_0}$ since $\delta|_{U_0\beta}=\id$. So $\gamma$ extends $\alpha\beta$. Note also that $|R_0|>|U_0|$, so the maximality of $|U_0|$ gives also that $\beta$ extends to $\ov{\beta}\in \Hom_\m{D}(\N_T(U_0)\cap \N_T(R_0),T)$.

\smallskip

If $R_0=\N_{S_0}(U_0)$ then by our assumption, $\beta=\id$ and $\gamma$ extends $\alpha=\alpha\beta$. Hence, we may assume from now on that $R_0<\N_{S_0}(U_0)$. Then 
$$R_0<\N_{S_0}(U_0)\cap \N_{S_0}(R_0).$$
As $U_0$ is well-placed, $U_0$ is fully $\m{D}$-automized, so \ref{RemFullAut} implies that there exists $\rho_0\in \Hom_\m{D}(U_0\beta,U_0)$ such that $\N_T(U_0\beta)=N_{\rho_0}\cap T$. Then 
$$\tilde{R}_0<(\N_{S_0}(U_0)\cap \N_{S_0}(R_0))\ov{\beta}\leq \N_{S_0}(U_0\beta)=N_{\rho_0}\cap S_0.$$
Hence, $|N_{\rho_0}\cap S_0|>|\tilde{R}_0|=|Q_0|=|N_\alpha\cap S_0|$ and by the maximality of $|N_\alpha\cap S_0|$, $\rho_0$ extends to $\rho\in \Hom_\m{D}(\N_T(U_0\beta),T)$. Set
$$X:=\{t\in \N_T(U_0):c_t|_{U_0}\in \Aut_{\N_\alpha\cap T}(P_0)\alpha^*\}.$$
By \ref{NewLemma}, $X\leq \N_T(U_0)\cap \N_T(R_0)$. Note also $\C_T(U_0)\leq X$. In particular, $\C_T(U_0)\leq \N_T(U_0)\cap \N_T(R_0)$ and hence
$$|\C_T(U_0\beta)|=|\C_T(U_0\beta)\rho|\leq |\C_T(U_0)|=|\C_T(U_0)\ov{\beta}|\leq |\C_T(U_0\beta)|.$$
So equality holds above and thus $\C_T(U_0)\ov{\beta}=\C_T(U_0\beta)$. Now
\begin{eqnarray*}
\Aut_{(N_\alpha\cap T)\gamma}(U_0\beta)&=&\Aut_{N_\alpha\cap T}(P_0)(\gamma|_{P_0})^*\\
&=&\Aut_{N_\alpha\cap T}(P_0)\alpha^*(\beta|_{U_0})^*=\Aut_X(U_0)(\beta|_{U_0})^*=\Aut_{X\ov{\beta}}(U_0\beta),
\end{eqnarray*}
where the first and last equality uses \ref{littleLemma}. This implies 
$$(N_\alpha\cap T)\gamma\leq (X\ov{\beta})\C_T(U_0\beta)=(X\ov{\beta})(\C_T(U_0)\ov{\beta})\leq (\N_T(U_0)\cap \N_T(R_0))\ov{\beta}.$$
Hence, $\gamma\ov{\beta}^{-1}\in \Hom_{\m{D}}(N_\alpha\cap T,T)$ is well-defined and extends $\alpha$, so (\ref{P0Extend2}) holds.

\smallskip

We now derive the final contradiction. Since $U_0$ is a counterexample to the assertion, there exists $P_0\in U_0^\m{D}$ and $\alpha\in \Hom_\m{D}(P_0,U_0)$ such that $\alpha$ does not extend to a member of $\Hom_{\m{D}}(N_\alpha^\m{D},T)$. As $U_0$ is well-placed, $U_0$ is fully $\m{D}$-automized, so by \ref{NphicapR}, $N_\alpha^\m{D}=N_\alpha\cap T$. Observe that $S_0\in\F_0^{fc}$ and $S_0$ is fully $\m{D}$-automized since $\Aut_\m{D}(S_0)=\Aut_{\F_0}(S_0)\Aut_T(S_0)$. So by \ref{Lemma5}, $P_0\neq S_0$ and thus $P_0<\N_{S_0}(P_0)$. As $\Aut_T(U_0)\in \Syl_p(\Aut_\m{D}(U_0))$ there exists $\chi\in \Aut_\m{D}(U_0)$ such that $\Aut_T(P_0)\alpha^*\chi^*\leq \Aut_T(U_0)$. Then $\N_T(P_0)=\N_{\alpha\chi}\cap T$, so $P_0<\N_{S_0}(P_0)=N_{\alpha\chi}\cap S_0$. Thus, by (\ref{P0Extend2}), $\alpha\chi$ extends to an element $\gamma\in \Hom_\m{D}(\N_T(P_0),T)$. Note that 
$$\Aut_{(N_\alpha\cap T)\gamma}(U_0)(\chi^{-1})^*=\Aut_{N_\alpha\cap T}(P_0)(\gamma|_{P_0})^*(\chi^{-1})^*=\Aut_{N_\alpha\cap T}(P_0)\alpha^*\leq \Aut_T(P_0).$$
 Hence, $(N_{\alpha}\cap T)\gamma\leq N_{\chi^{-1}}$. As $U_0\in\F_0^{fc}$ is fully $\m{D}$-automized and $\chi^{-1}\in \Aut_{\m{D}}(U_0)$, it follows from \ref{Lemma5} that $\chi^{-1}$ extends to $\psi\in \Hom_{\m{D}}(N_{\chi^{-1}}\cap T,T)$. Then $\gamma\psi\in \Hom_\m{D}(N_\alpha \cap T,T)$ extends $\alpha$, a contradiction which completes the proof.
\end{proof}

\begin{lemma}\label{wellplacedNormConj}
Let $P_0\in\F_0^c$. Then there exists $\alpha\in \Hom_\m{D}(\N_T(P_0),T)$ such that $P_0\alpha$ is well-placed. 
\end{lemma}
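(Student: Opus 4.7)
My plan is to combine the existence of some well-placed $\m{D}$-conjugate of $P_0$ (from \ref{wellplacedExists}) with the $\m{D}$-receptivity of well-placed subgroups (from \ref{P0Extend}) and a standard Sylow adjustment. Since $P_0\in\F_0^c$, Lemma \ref{wellplacedExists} gives some $P_0'\in P_0^\m{D}$ that is well-placed; pick any $\beta\in\Iso_\m{D}(P_0,P_0')$. The point is that the image $P_0'$ is already what we want; the task is to extend some isomorphism $P_0\to P_0'$ all the way out to $\N_T(P_0)$.

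Because $P_0'$ is well-placed, condition (ii) of \ref{WellPlacedDef} says $\Aut_T(P_0')\in\Syl_p(\Aut_\m{D}(P_0'))$. The subgroup $\Aut_T(P_0)\beta^*$ is a $p$-subgroup of $\Aut_\m{D}(P_0')$, so by Sylow's theorem there is $\chi\in\Aut_\m{D}(P_0')$ with $\Aut_T(P_0)\beta^*\chi^*\le\Aut_T(P_0')$. Setting $\alpha:=\beta\chi\in\Iso_\m{D}(P_0,P_0')$, the definition of $N_\alpha$ gives $\N_T(P_0)\le N_\alpha$, hence $\N_T(P_0)\le N_\alpha\cap T$. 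Since $P_0'=P_0\alpha$ is fully $\m{D}$-automized (again by (ii) of well-placedness), Remark \ref{NphicapR} applied with $(T,\m{D})$ in place of $(R,\m{E})$ identifies $N_\alpha\cap T=N_\alpha^\m{D}$.

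Now well-placedness of $P_0'$ together with Lemma \ref{P0Extend} shows that $P_0'$ is $\m{D}$-receptive, so $\alpha$ extends to some $\hat\alpha\in\Hom_\m{D}(N_\alpha^\m{D},T)$. Restricting to $\N_T(P_0)\le N_\alpha^\m{D}$ yields the required morphism in $\Hom_\m{D}(\N_T(P_0),T)$ sending $P_0$ to the well-placed subgroup $P_0'$.

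I expect no real obstacle here: all of the substantive content (the existence of a well-placed conjugate, the Sylow property of $\Aut_T$, the identification of $N_\alpha^\m{D}$, and receptivity) has already been established in the preceding lemmas. The only subtlety to be careful about is to apply \ref{NphicapR} in the right direction, making sure that $P_0'$ — not $P_0$ — is the subgroup that is fully $\m{D}$-automized; this is exactly why we perform the Sylow adjustment by $\chi$ before invoking receptivity.
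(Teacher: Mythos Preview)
Your proposal is correct and follows essentially the same route as the paper's proof: the paper invokes \ref{wellplacedExists} to obtain a well-placed $U_0\in P_0^{\m D}$, then cites \ref{RemFullAut} (whose proof is exactly your Sylow adjustment together with \ref{NphicapR}) to produce $\alpha\in\Iso_{\m D}(P_0,U_0)$ with $N_T(P_0)=N_\alpha^{\m D}$, and finally applies \ref{P0Extend}. The only point worth making explicit is that $P_0'\in\F_0^c$ (needed for the hypothesis of \ref{P0Extend}), which follows from \ref{PrelI}(a) since $P_0'\in P_0^{\m D}\subseteq P_0^{\F}$.
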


\begin{proof}
By \ref{wellplacedExists}, there exists $U_0\in P_0^\m{D}$ such that $U_0$ is well-placed, and by \ref{RemFullAut}, there exists $\alpha\in\Iso_\m{D}(P_0,U_0)$ with $N_T(P_0)=N_\alpha^\m{D}$. Now the assertion follows from \ref{P0Extend}.                                                                                                                                       
\end{proof}

\begin{lemma}\label{S0leqU}
Let $S_0\leq P\leq T$. Then $\Aut_T(P)\cap \Ac(P)\in \Syl_p(\Ac(P))$.
\end{lemma}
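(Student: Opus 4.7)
My plan is to first reduce the Sylow statement to one about a more convenient overgroup, then analyze via restriction to $S_0$, and finally address the one nontrivial step using the well-placedness of $S_0$.

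Set $K := \{\varphi \in \Aut_\F(P) : [P,\varphi] \leq S_0 \text{ and } \varphi|_{S_0} \in \Aut_{\F_0}(S_0)\}$. Since $S_0$ is strongly closed in $\F$ and $\Aut_{\F_0}(S_0) \unlhd \Aut_\F(S_0)$ (from $\F_0\unlhd\F$), $K$ is indeed a subgroup of $\Aut_\F(P)$. Because $P_0 = P\cap S_0 = S_0$, the generators of $\Ac(P)$ are precisely the $p'$-elements of $K$, so $\Ac(P) = O^p(K)$. In particular $\Ac(P) \unlhd K$ and $K/\Ac(P)$ is a $p$-group. Hence any Sylow $p$-subgroup of $K$ meets $\Ac(P)$ in a Sylow $p$-subgroup of $\Ac(P)$, and it suffices to show that $L := \Aut_T(P) \cap K$ is a Sylow $p$-subgroup of $K$; then $L\cap \Ac(P) = \Aut_T(P)\cap\Ac(P)$ is Sylow in $\Ac(P)$.

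To study $K$, I consider the restriction homomorphism $\sigma : K \to \Aut_{\F_0}(S_0)$, $\varphi \mapsto \varphi|_{S_0}$. Its kernel consists of $\F$-automorphisms of $P$ acting trivially on both $S_0$ and $P/S_0$, and a standard coprime-action argument shows each such $\varphi$ has order dividing $|S_0|$, so $\ker\sigma$ is a $p$-group. By saturation of $\F_0$, $\Inn(S_0) = \Aut_{S_0}(S_0) \in \Syl_p(\Aut_{\F_0}(S_0))$. Since $S_0 \leq P \cap T$, one has $\Aut_{S_0}(P) \leq L$, and $\sigma(\Aut_{S_0}(P)) = \Inn(S_0)$. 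Hence $\sigma(K) \supseteq \Inn(S_0)$, which is then a Sylow $p$-subgroup of $\sigma(K)$, and moreover $\sigma(L) \supseteq \Inn(S_0)$. Combining these with the exact sequence $1\to\ker\sigma\to K\to\sigma(K)\to 1$, we get $|K|_p = |\Inn(S_0)|\cdot|\ker\sigma|$ and $|L| = |\Inn(S_0)|\cdot|L\cap\ker\sigma|$. The Sylow property $L \in \Syl_p(K)$ therefore reduces to showing $\ker\sigma \leq \Aut_T(P)$, i.e. every $\F$-automorphism of $P$ trivial on $S_0$ and on $P/S_0$ is induced by conjugation by an element of $N_T(P)$.

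To handle this last step I would exploit that $S_0$ itself is well-placed in the sense of Definition \ref{WellPlacedDef}: the normalizer chain starting at $S_0$ stabilizes ($N_i = S_0$ for all $i\geq 0$), $S_0\in\F_0^f$ trivially, and the conditions (ii), (iii) reduce to $\Aut_T(S_0)\in\Syl_p(\Aut_\m{D}(S_0))$, which follows from $\Aut_\m{D}(S_0) = \Aut_{\F_0}(S_0)\Aut_T(S_0)$ (by construction of $\m{D}$) together with $\Inn(S_0)\in\Syl_p(\Aut_{\F_0}(S_0))$ (by saturation of $\F_0$) and $\Inn(S_0)\leq \Aut_T(S_0)$. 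By Lemma \ref{P0Extend}, $S_0$ is then $\m{D}$-receptive. Applying Lemma \ref{Lemma5} to $\id_{S_0}\in\Aut_\m{D}(S_0)$ gives extensions in $\Hom_\m{D}(T,T)$; comparing these with $\varphi$ (viewed as another extension of $\id_{S_0}$ from $S_0$ up to $P$), using that elements $\varphi\in\ker\sigma$ act on $P$ via a crossed homomorphism $P/S_0 \to Z(S_0)$, should produce the required $t \in N_T(P)$ with $c_t|_P = \varphi$.

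I expect this final step, showing $\ker\sigma \leq \Aut_T(P)$, to be the main obstacle: it is essentially the only place where the hypothesis $S_0 \leq T$ is used in an essential way, and it requires a careful combination of the $\m{D}$-receptiveness of $S_0$ with the saturation axioms for $\F$ to force an abstractly defined $\F$-automorphism into the concrete image of $N_T(P)$.
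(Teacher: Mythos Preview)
Your reduction is clean: introducing $K$, observing $\Ac(P)=O^p(K)$, and tracking orders through the restriction $\sigma:K\to\Aut_{\F_0}(S_0)$ correctly reduces the Sylow statement to the single claim $\ker\sigma\leq\Aut_T(P)$. The difficulty is that your plan for this last claim does not work.

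The obstruction is that an element $\varphi\in\ker\sigma$ is an $\F$-automorphism of $P$; it is not known to lie in $\m{D}=\F_0T$. Both Lemma~\ref{P0Extend} and Lemma~\ref{Lemma5} concern only $\m{D}$-morphisms, so neither constrains $\varphi$. Concretely, applying Lemma~\ref{Lemma5} to $\id_{S_0}$ simply returns $\id_T$, and there is no comparison mechanism between that and an arbitrary $\F$-extension of $\id_{S_0}$. The crossed-homomorphism description $\varphi\leftrightarrow(P/S_0\to Z(S_0))$ is correct, but it does not by itself manufacture an element $t\in N_T(P)$ realizing $\varphi$; indeed, although $\ker\sigma$ is normal in $\Aut_\F(P)$ and hence lies in $O_p(\Aut_\F(P))$, you have no control over $O_p(\Aut_\F(P))$ unless $P$ is fully $\F$-automized. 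Your target statement $\ker\sigma\leq\Aut_T(P)$ is in fact equivalent to $L\in\Syl_p(K)$, which is a priori stronger than the lemma (the lemma only yields $\ker\sigma\cap\Ac(P)\leq\Aut_T(P)$), so you may even be aiming at more than is needed or true.

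The paper proceeds quite differently. It first replaces $P$ by a fully $\F$-normalized conjugate $Q=P\beta$, and works inside the constrained system $\m{D}(S_0)=\N_{\N_\F(S_0\C_S(S_0))}(S_0)$ with model $G(S_0)$ and normal subgroup $N(S_0)$ modelling $\m{E}(S_0)=\N_{\F_0}(S_0)$. The substantive step is to show $\Ac(Q)\leq\Aut_{\m{D}(S_0)}(Q)$: given a $p'$-element $\vphi\in\Ac(Q)$, one extends $\vphi|_{S_0}$ via Lemma~\ref{Lemma5} (applied with $S$ in place of $T$) to $\psi\in\Hom_{\F_0S}(Q\C_S(S_0),S)$ satisfying $[Q\C_S(S_0),\psi]\leq S_0$, and then $\vphi\psi^{-1}\in O_p(\Aut_\F(Q))\leq\Aut_S(Q)$, the last containment using precisely that $Q\in\F^f$. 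Once $\Ac(Q)$ is inside $\Aut_{G(S_0)}(Q)$, Proposition~\ref{Prop1Str} identifies $\Ac(Q)=O^p(\Aut_H(Q))$ for $H=\N_{N(S_0)}(Q)\,T_1$ with $T_1=\N_T(P)\beta\in\Syl_p(H)$, and the Sylow claim follows immediately and transfers back along $\beta$. The passage to $\F^f$ and the use of models are both essential here; this is exactly what your sketch is missing.
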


\begin{proof}
Using Notation \ref{MainNot}, let $G:=G(S_0)$ and $N:=N(S_0)$ be models for $\m{D}(S_0)=\N_\F(S_0\C_S(S_0))$ respectively $\m{E}(S_0)=\N_{\F_0}(S_0)$ such that $N\unlhd G$. By \cite[Lemma~II.3.1]{Aschbacher/Kessar/Oliver:2011a}, there exists $\beta\in \Hom_\F(\N_S(P),S)$ such that $Q:=P\beta\in \F^f$. Set 
$$T_1:=\N_T(P)\beta,\;H:=\N_N(Q)T_1\mbox{ and } \m{G}:=\F_{T_1}(H).$$
As $S_0\leq N\cap Q\leq \N_N(Q)$ and $S_0\in \Syl_p(N)$, we have $S_0\in \Syl_p(\N_N(Q))$. Moreover, $S_0=S_0\beta\leq \N_T(P)\beta=T_1$, so $T_1\in \Syl_p(H)$ and $\m{G}$ is saturated. Assume first
\begin{equation}\label{AcleqD}
\Ac(Q)\leq \Aut_{\m{D}(S_0)}(Q).
\end{equation}
Then $\Ac(Q)=\Ac_{\m{D}(S_0),\m{E}(S_0)}(Q)=O^p(\Aut_N(Q))=O^p(\Aut_H(Q))=O^p(\Aut_{\m{G}}(Q))$, where the first equality uses (\ref{AcleqD}), the second uses \ref{Prop1Str}, and the third uses $O^p(H)=O^p(\N_N(Q))$. As $Q\unlhd H$, we have $Q\unlhd \m{G}$. In particular $Q\in\m{G}^f$ and, as $\m{G}$ is saturated, $\Aut_{T_1}(Q)\in \Aut_{\m{G}}(Q)$. Hence,
$$\Ac(Q)\cap \Aut_{T_1}(Q)\in \Syl_p(\Ac(Q)).$$
Observe that $\Ac(P)=\Ac(Q)(\beta^{-1})^*$ and, by \ref{littleLemma}, $\Aut_{T_1}(Q)(\beta^{-1})^*=\Aut_{T_1\beta^{-1}}(P)=\Aut_T(P)$. So the assertion follows and it remains only to prove (\ref{AcleqD}).
For the proof let $\vphi\in \Ac(Q)$ be a $p^\prime$-element. Then $\vphi_0:=\vphi|_{S_0}\in \Aut_{\F_0}(S_0)$ and, by \ref{littleLemma},  $Q\C_S(S_0)\leq N_{\vphi_0}$. Observe that $\Aut_{\F_0S}(S_0)=\Aut_{\F_0}(S_0)\Aut_S(S_0)$ and thus $S_0$ is fully $\F_0S$-automized. Hence, it follows from \ref{Lemma5} that $\vphi_0$ extends to $\psi\in \Hom_{\F_0S}(Q\C_S(S_0),S)$. By construction of $\F_0S$, $\psi=\chi c_s$ for some $s\in S$ and $\chi\in \Hom_{\F_0S}(Q\C_S(S_0),S)$ with $[Q\C_S(S_0),\chi]\leq S_0$ and $\chi|_{S_0}\in \Aut_{\F_0}(S_0)$. Then $c_s|_{S_0}=(\chi|_{S_0})^{-1}\vphi_0\in \Aut_S(S_0)\cap \Aut_{\F_0}(S_0)=\Inn(S_0)$, so $s=s_0c$ for some $s_0\in S_0$ and $c\in \C_S(S_0)$. Observe now that $\psi_1:=\chi c_{s_0}$ also extends $\vphi_0$, so replacing $(\psi,s)$ by $(\psi_1,s_0)$ we may assume $s\in S_0$. Then $[Q\C_S(S_0),\psi]\leq S_0$, hence we have $\vphi\psi^{-1}|_{S_0}=\id$ and $[Q,\vphi\psi^{-1}]\leq S_0$. By \cite[8.2.2(b)]{KurzweilStellmacher2004}, this yields $\vphi\psi^{-1}\in O_p(\Aut_\F(Q))\leq \Aut_S(Q)\leq \Aut_{\m{D}(S_0)}(Q)$, where we use $\Aut_S(Q)\in \Syl_p(\Aut_\F(Q))$ as $Q\in\F^f$. Since $\psi$ is a morphism in $\m{D}(S_0)$, it follows $\vphi\in \Aut_{\m{D}(S_0)}(Q)$ showing (\ref{AcleqD}). This completes the proof. 
\end{proof}

For the proof of the next lemma recall the definition of $K$-normalizers and fully $K$-normalized subgroups from \cite[Section~I.5]{Aschbacher/Kessar/Oliver:2011a}

\begin{lemma}\label{AcleqAutN}
Let $U\in \m{D}$ such that $U_0\in \F_0^{fc}$. Let $\m{N}\in\mathfrak{N}(U_0)$ and $R\in U^{\m{N}}\cap \m{N}^f$. Then $\Ac(R)\leq \Aut_{\m{N}}(R)$. 
\end{lemma}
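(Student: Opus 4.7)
The plan is to transport the situation to a group model. I will pick $\alpha\in\Hom_\F(\N_S(U_0),S)$ with $V_0:=U_0\alpha\in\F^f$ so that $\m{N}=\m{N}(U_0,\alpha)$, and set $G:=G(V_0)$, $N:=N(V_0)\unlhd G$, $P:=\N_T(U_0)\alpha$, $H:=NP\le G$, $R':=R\alpha$ and $R_0':=R'\cap S_0=R_0\alpha$. Since $\m{N}=\alpha \F_P(H)\alpha^{-1}$, the assertion $\Ac(R)\le\Aut_\m{N}(R)$ is equivalent to $\alpha^{-1}\Ac(R)\alpha\le\Aut_H(R')$. Note that $R_0\in\F_0^c$ because $U_0\le R_0$ and $\F_0$-centricity propagates upward within any $\F_0$-conjugacy class; analogously $R_0'\ge V_0$ lies in $\m{E}(V_0)^c$. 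Applying Proposition~\ref{Prop1Str} to the finite group $H$ with normal subgroup $N$ and Sylow $P$ yields
$$\Ac_{\F_P(H),\m{E}(V_0)}(R')\;=\;O^p(\Aut_H(R'))\;\le\;\Aut_H(R').$$
Hence it will suffice to embed each transported $p^\prime$-generator $\varphi':=\alpha^{-1}\varphi\alpha$ (with $\varphi$ a $p^\prime$-generator of $\Ac(R)$) into $\Ac_{\F_P(H),\m{E}(V_0)}(R')$.

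Such $\varphi'$ is already a $p^\prime$-element of $\Aut_\F(R')$ with $[R',\varphi']\le R_0'$ and $\varphi'|_{R_0'}\in\Aut_{\F_0}(R_0')$, so what remains to check is (i) $V_0\varphi'=V_0$ and (ii) $\varphi'\in\Aut_H(R')$. Once (i) is established, $\varphi'$ becomes an $\m{D}(V_0)$-automorphism, so $\varphi'=c_g|_{R'}$ for some $g\in\N_G(R')$; a Frattini-style argument inside $G$ using $H=NP$ together with the Sylow property $\Aut_P(R')\in\Syl_p(\Aut_H(R'))$ (which follows from the hypothesis $R\in\m{N}^f$) will then force $g$ into $\N_H(R')\C_G(R')$ and give (ii). The principal difficulty is therefore (i), i.e.\ showing that every $p^\prime$-generator of $\Ac(R)$ preserves $U_0$. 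The hypothesis $R\in\m{N}^f$ is essential here: together with the saturation of $\m{N}$, the receptivity of $R$, and a reduction (via Lemmas~\ref{wellplacedNormConj} and~\ref{Lemma5}) to a well-placed representative of the $U_0$-class, it should allow me to extend $\varphi$ to a larger subgroup on which the $U_0$-invariance becomes forced. This extension argument, in the spirit of the proofs of Lemmas~\ref{P0Extend} and~\ref{NphicapR}, is the main technical obstacle, as the $\F_0$-action on $R_0$ can in principle move $U_0$ within its $\F_0$-conjugacy class inside $R_0$, whereas the $\m{N}$-structure pins $U_0$ down; the hypothesis $R\in\m{N}^f$ is precisely what makes this reconciliation possible.
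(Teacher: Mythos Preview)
Your plan has a genuine gap, and it also misidentifies where the difficulty lies.

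First, claim (i) is trivial and not the ``principal difficulty''. Since $U_0\unlhd\m{N}$, every $\m{N}$-conjugate $R$ of $U$ satisfies $R_0=R\cap S_0=U_0$; hence any $\vphi\in\Ac(R)$ already has $\vphi|_{R_0}=\vphi|_{U_0}\in\Aut_{\F_0}(U_0)$, so $U_0\vphi=U_0$ automatically. The extension machinery you invoke for (i) is unnecessary.

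Second, the step ``once (i) is established, $\vphi'$ becomes a $\m{D}(V_0)$-automorphism'' is unjustified and is in fact the heart of the matter. Knowing that $\vphi'\in\Aut_\F(R')$ normalizes $V_0$ does \emph{not} place $\vphi'$ in $\m{D}(V_0)=\N_{\N_\F(V_0\C_S(V_0))}(V_0)$: for that you would need $\vphi'$ to extend in $\F$ to a morphism on $R'\C_S(V_0)$ normalizing $V_0\C_S(V_0)$, and there is no a priori reason this holds. (Lemma~\ref{S0leqU} establishes exactly this containment in the special case $V_0=S_0$, and already there it requires a nontrivial extension argument via $\F_0S$.) Moreover, even if one had $\vphi'=c_g|_{R'}$ with $g\in G$, the ``Frattini-style'' claim that the Sylow condition $\Aut_P(R')\in\Syl_p(\Aut_H(R'))$ forces $g\in\N_H(R')\C_G(R')$ is not explained; that Sylow condition controls the internal structure of $\Aut_H(R')$, not its embedding in $\Aut_G(R')$. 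So your reduction to Proposition~\ref{Prop1Str} is circular: to land in $\Ac_{\F_P(H),\m{E}(V_0)}(R')$ you must already know $\vphi'\in\Aut_H(R')$, which is the conclusion.

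The paper's proof is entirely different in structure. It is a proof by contradiction that relies essentially on the standing minimality assumption on $(\F,\F_0,T)$ set up at the beginning of Section~\ref{S5}. Assuming some $p'$-element $\vphi\in\Ac(P)\setminus\Aut_{\m{N}_1}(P)$ exists, one builds $K$-normalizer subsystems $\m{G}=\N_\F^K(P_0)$ and $\m{G}_0=\N_{\F_0}^{K_0}(P_0)$ with $\m{G}_0\unlhd\m{G}$, shows that $O_p(\Ac_{\m{G},\m{G}_0}(P))\not\le\Aut_{T_1}(P)$, and then invokes minimality: either $(\m{G}_0P)_\m{G}$ is saturated (contradiction via a Sylow argument) or $\m{G}=\F$ and $\m{G}_0=\F_0$, forcing $P_0=S_0$, whereupon Lemma~\ref{S0leqU} gives the contradiction. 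Your plan neither uses nor replaces this minimality reduction.
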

 
\begin{proof}  
Let $\alpha\in \Hom_\F(\N_S(U_0),S)$ with $U_0\alpha\in\F^f$ and $\m{N}=\m{N}(U_0,\alpha)$. Then $\Ac(R)\leq \Aut_{\m{N}}(R)$ is equivalent to $\Ac(P)\leq \Aut_{\m{N}_1}(P)$ for $P:=R\alpha$, $T_1:=\N_T(U_0)\alpha$ and $\m{N}_1:=\F_{T_1}(H(U_0,\alpha))$. Since $U_0\unlhd \m{N}$, $R_0=U_0$ and so $P_0=R_0\alpha=U_0\alpha$. Assume by contradiction that there exists a $p^\prime$-element $\vphi\in \Ac(P)$ with $\vphi\not\in \Aut_{\m{N}_1}(P)$. Set 
$$\vphi_0:=\vphi|_{P_0},\;K_0:=\Inn(P_0)\<\vphi_0\>,\;K:=\Aut_P(P_0)\<\vphi_0\>,\;\m{G}_0:=\N_{\F_0}^{K_0}(P_0),\mbox{ and }\m{G}:=\N_\F^K(P_0).$$
Note that $[\Aut_P(P_0),\vphi_0]\leq \Inn(P_0)$ by \ref{littleLemma} as $[P,\vphi]\leq P_0$. In particular, $K_0\unlhd K$ and $\Aut_P(P_0)\unlhd K$.
As $\vphi_0$ is a $p^\prime$-element, we get $\Aut_P(P_0)\in \Syl_p(K)$ and $\Inn(P_0)\in \Syl_p(K_0)$. This yields $\N_S^K(P_0)=P\C_S(P_0)$ and  $\N_{S_0}^{K_0}(P_0)=P_0\C_{S_0}(P_0)=P_0$. Moreover, 
$$\Aut_S^K(P_0)=\Aut_P(P_0)\in \Syl_p(K)=\Syl_p(\Aut_\F^K(P_0))$$
and
$$\Aut_{S_0}^{K_0}(P_0)=\Inn(P_0)\in \Syl_p(K_0)=\Syl_p(\Aut_{\F_0}^{K_0}(P_0)).$$
Since $P_0\in\F^f$, $P_0$ is fully $\F$-centralized and, as $U_0\in\F_0^f$, it follows from \ref{PrelI}(b) that $P_0\in\F_0^f$. 
Hence, by \cite[Prop.~I.5.2]{Aschbacher/Kessar/Oliver:2011a}, $P_0$ is fully $K$-normalized in $\F$ and fully $K_0$-normalized in $\F_0$. Now \cite[Thm.~I.5.5]{Aschbacher/Kessar/Oliver:2011a} implies: 
\begin{equation}\label{1}
\m{G}\mbox{ and }\m{G}_0\mbox{ are saturated subsystems of }\F\mbox{ on }P\C_S(P_0)\mbox{ respectively on }P_0. 
\end{equation}
We show next:
\begin{equation}\label{2}
\m{G}_0\unlhd \m{G}.
\end{equation}
Observe that $\m{G}_0$ is $\m{G}$-invariant as $K_0\unlhd K$. By (\ref{1}), $\m{G}$ and $\m{G}_0$ are saturated. Furthermore, clearly every element $c_x\in \Inn(P_0)$ with $x\in P_0$ extends to an element $c_x\in \Aut_\m{G}(P_0\C_S(P_0))$ and $[\C_S(P_0),c_x]\leq [\C_S(P_0),P_0]=1$. Hence, it is sufficient to prove that $\vphi_0$ extends to $\ov{\vphi}\in \Aut_\m{G}(P_0\C_S(P_0))$ with $[\C_S(P_0),\ov{\vphi}]\leq Z(P_0)$. To show that, set $H_0:=N(P_0)\N_S(P_0)\leq G(P_0)$ and note that $\F_{\N_S(P_0)}(H_0)$ is saturated, as $\N_S(P_0)\in \Syl_p(H_0)$. Hence, $\vphi_0$ extends to a $p^\prime$-element $\ov{\vphi}\in \Aut_{H_0}(P_0\C_S(P_0))$. Then $\ov{\vphi}\in O^p(\Aut_{H_0}(P_0\C_S(P_0)))\leq \Ac(P_0\C_S(P_0))$ by  \ref{Prop1Str}. Hence, $[\C_S(P_0),\ov{\vphi}]\leq P_0\cap \C_S(P_0)=Z(P_0)$. This proves (\ref{2}). We show next the following property:
\begin{equation}\label{3}
O_p(\Ac_{\m{G},\m{G}_0}(P))\not\leq \Aut_{T_1}(P).
\end{equation}
For the proof note first that $\vphi_0\in \Aut_{\F_0}(P_0)\leq \Aut_{\m{N}_1}(P_0)$ and $\m{N}_1$ is saturated. Hence, as $P\C_{T_1}(P_0)\leq N_{\vphi_0}^{\m{N}_1}$ by \ref{littleLemma}, $\vphi_0$ extends to $\psi\in \Hom_{\m{N}_1}(P\C_{T_1}(P_0),T_1)$. By \ref{CommInC(P0)}, we have $[\C_P(\vphi),\psi]\leq \C_{T_1}(P_0)$. As the action of $\vphi$ on $P$ is coprime and $[P,\vphi]\leq P_0$, \cite[8.2.7(a)]{KurzweilStellmacher2004} yields $P=P_0\C_P(\vphi)$. Hence, $\psi\in \Aut_{\m{N}_1}(P\C_{T_1}(P_0))$. As $\vphi_0$ is a $p^\prime$-element, we can then choose $\psi$ to be a $p^\prime$-element. So $\psi\in O^p(\Aut_{\m{N}_1}(P\C_{T_1}(P_0)))\leq \Ac(P\C_{T_1}(P_0))$ by \ref{Prop1Str}. In particular, $[P,\psi]\leq S_0$ and thus $[\C_P(\vphi),\psi]\leq \C_{S_0}(P_0)=P_0$. Hence as $P=P_0C_P(\vphi)$, $P\psi=P$ and $\psi|_P\in \Ac(P)\cap \Aut_{\m{N}_1}(P)$. In particular, $\vphi,\psi|_P\in \Ac_{\m{G},\m{G}_0}(P)$ and $[P,\vphi(\psi|_P)^{-1}]\leq P_0$. As $\vphi|_{P_0}(\psi|_{P_0})^{-1}=\id_{P_0}$, it follows from \cite[8.2.2(b)]{KurzweilStellmacher2004} that $\vphi(\psi|_P)^{-1}\in O_p(\Ac_{\m{G},\m{G}_0}(P))$. By assumption, $\vphi\not\in \Aut_{\m{N}_1}(P)$, so $\vphi(\psi|_P)^{-1}\not\in \Aut_{\m{N}_1}(P)$ and in particular,
$\vphi(\psi|_P)^{-1}\not\in \Aut_{T_1}(P)$. This proves (\ref{3}).

\smallskip

We now derive the final contradiction. If $(\m{G}_0P)_\m{G}$ is saturated, then 
$$\Inn(P)\cap \Ac_{\m{G},\m{G}_0}(P)\in \Syl_p(\Ac_{\m{G},\m{G}_0}(P))$$
which contradicts (\ref{3}). Hence, because of the minimality of $\F$ and $\F_0$, $\m{G}=\F$ and $\m{G}_0=\F_0$. In particular, $P_0=S_0\leq P$. Hence, by \ref{S0leqU}, $O_p(\Ac(P))\leq \Aut_T(P)$. As $U_0=S_0\unlhd S$, we get also $T_1=T\alpha=T$. Hence we have again a contradiction to (\ref{3}). This completes the proof.
\end{proof}

\begin{notation}
 Set
\begin{eqnarray*}
\m{H}_0&:=&\{P\leq T:P_0\in\F_0^c\},\\
\m{H}&:=&\m{H}_0\cap \m{D}^c,\\
\m{G}_0&:=&\{P\leq T:P\in\m{D}^f\mbox{ and }P_0\in\F_0^{fc}\},\\
\m{G}&:=&\m{G}_0\cap \m{D}^c.
\end{eqnarray*}
Furthermore set $\A(P):=\Aut_T(P)\Ac(P)$ for any $P\leq T$.
\end{notation}

\begin{lemma}\label{G0Good}
 Let $U\in\m{G}_0$ and $\m{N}\in\m{N}(U_0)$. Then $\Aut_\m{N}(U)=\A(U)$, $\Aut_T(U)\in \Syl_p(\A(U))$, and every element $\vphi\in \A(U)$ extends to an element of $\Hom_\m{D}(N_\vphi\cap T,T)$.
\end{lemma}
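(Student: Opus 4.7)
My strategy is to use that $\mathcal{N}$ is saturated (by \ref{ElPropN(P_0)}) and contains $U_0$ as a normal subgroup, so every $\mathcal{N}$-conjugate of $U$ intersects $S_0$ in exactly $U_0$. I will pick a fully $\mathcal{N}$-normalized representative $V\in U^\mathcal{N}$, verify the three assertions for $V$ using \ref{AcleqAutN} and \ref{ElPropN(P_0)}, and then transfer back to $U$ along a carefully chosen $\mathcal{N}$-isomorphism $\alpha:U\to V$. A first observation is that $U\leq N_T(U_0)$ since $S_0\unlhd S$ forces $U_0\unlhd U$, and consequently $\Aut_T(U)\leq\Aut_\mathcal{N}(U)$ because $\mathcal{N}$ is a fusion system on $N_T(U_0)$ and $N_T(U)\leq N_T(U_0)$.

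To prove (1) and (2) for $U$ I first execute a Sylow sandwich: $\Aut_T(V)\in\Syl_p(\Aut_\mathcal{N}(V))$ by $V\in\mathcal{N}^f$, and $\Aut_T(U)\in\Syl_p(\Aut_\mathcal{D}(U))$ by $U\in\mathcal{D}^f$; together with $\Aut_\mathcal{N}(U)\leq\Aut_\mathcal{D}(U)$ and $|\Aut_\mathcal{D}(V)|=|\Aut_\mathcal{D}(U)|$, this forces $|\Aut_T(U)|=|\Aut_T(V)|$, so $U$ is also fully $\mathcal{N}$-automized. Now \ref{AcleqAutN} gives $\Ac(V)\leq\Aut_\mathcal{N}(V)$ while \ref{ElPropN(P_0)} gives $O^p(\Aut_\mathcal{N}(V))\leq\Ac(V)$; combined with $\Aut_T(V)\in\Syl_p(\Aut_\mathcal{N}(V))$ this yields $\Aut_\mathcal{N}(V)=\Aut_T(V)\Ac(V)=\A(V)$. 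I then use \ref{RemFullAut} applied inside the saturated $\mathcal{N}$ to pick $\alpha\in\Iso_\mathcal{N}(U,V)$ with $N_T(U)=N_\alpha^\mathcal{N}$, and the extension axiom in $\mathcal{N}$ (since $V$ is fully $\mathcal{N}$-centralized) to obtain $\hat\alpha\in\Hom_\mathcal{N}(N_T(U),N_T(U_0))$. Conjugation by $\hat\alpha$ restricts to bijections on $\Aut_\mathcal{N}$, on $\Aut_T$ (via \ref{littleLemma}), and on $\Ac$: the latter uses that $[V,\alpha^{-1}\varphi\alpha]=[U,\varphi]\alpha\leq U_0\alpha=U_0$ (because $U_0\unlhd\mathcal{N}$), and that $\alpha|_{U_0}\in\Aut_\mathcal{N}(U_0)$ normalizes $\Aut_{\mathcal{F}_0}(U_0)$, which is precisely $\mathcal{N}_{\mathcal{F}_0}(U_0)\unlhd\mathcal{N}$ from \ref{ElPropN(P_0)}. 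Transporting $\Aut_\mathcal{N}(V)=\A(V)$ back yields $\Aut_\mathcal{N}(U)=\A(U)$ and $\Aut_T(U)\in\Syl_p(\A(U))$.

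For the extension statement, given $\varphi\in\A(U)=\Aut_\mathcal{N}(U)$, I first note $N_\varphi^\mathcal{N}=N_\varphi\cap N_T(U_0)=N_\varphi\cap T$, using \ref{NphicapR} (since $U$ is fully $\mathcal{N}$-automized) together with $N_\varphi\leq N_S(U)\leq N_S(U_0)$. So it suffices to extend $\varphi$ inside $\mathcal{N}$ to $N_\varphi^\mathcal{N}$. I do this by composing with $\alpha$: the morphism $\varphi\alpha:U\to V$ has target fully $\mathcal{N}$-centralized, and I check $N_\varphi\cap T\leq N_{\varphi\alpha}^\mathcal{N}$ as follows. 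Any $t\in N_\varphi\cap T$ satisfies $(c_t|_U)\varphi^*\in\Aut_S(U)\cap\Aut_\mathcal{N}(U)=\Aut_T(U)$, hence equals $c_u|_U$ for some $u\in N_T(U)\leq N_\alpha$, whence $(c_t|_U)(\varphi\alpha)^*=c_{u\hat\alpha}|_V\in\Aut_T(V)$. The extension axiom in $\mathcal{N}$ now extends $\varphi\alpha$ to $\widehat{\varphi\alpha}\in\Hom_\mathcal{N}(N_\varphi\cap T,N_T(U_0))$, and I then compose with $\hat\alpha^{-1}$ to recover an extension of $\varphi$.

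The main obstacle I anticipate is the final composition step: $\hat\alpha^{-1}$ is a priori only defined on $N_T(U)\hat\alpha\leq N_T(V)$, while $\widehat{\varphi\alpha}(N_\varphi\cap T)$ merely lies in $N_T(V)$ and may involve elements of $C_T(V)$ not in the image of $\hat\alpha$. This obstruction should be resolved by exploiting that $\widehat{\varphi\alpha}$ is unique only up to elements of $C_\mathcal{N}(V)$, so one can modify the choice of extension appropriately. Equivalently, one may work directly inside the model $H(U_0,\alpha_*)$ for $\mathcal{N}$: writing $\varphi$ as $\alpha_*\,c_h\,\alpha_*^{-1}$ for some $h\in N_H(U\alpha_*)$, a standard Sylow argument in $T_1C_H(U\alpha_*)$ adjusts $h$ by an element of $C_H(U\alpha_*)$ so that the group-theoretic extension $c_{h'}$ of $\varphi$ has image inside $T_1=N_T(U_0)\alpha_*$, and pulling back through $\alpha_*^{-1}$ gives the required extension in $\mathcal{N}\subseteq\mathcal{D}$.
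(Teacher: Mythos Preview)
Your argument contains a genuine circularity. In the ``Sylow sandwich'' you invoke $\Aut_T(U)\in\Syl_p(\Aut_{\mathcal D}(U))$ ``by $U\in\mathcal D^f$''. But this implication requires $\mathcal D$ to be saturated (or at least that fully $\mathcal D$-normalized subgroups are fully $\mathcal D$-automized), and that is precisely what Section~\ref{S5} is in the process of proving. At this point nothing of the sort is available for $\mathcal D$, so the sandwich collapses.

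The fix is immediate and dissolves almost everything you wrote: $U\in\mathcal D^f$ \emph{directly} gives $U\in\mathcal N^f$. Indeed, $\mathcal N\subseteq\mathcal D$ implies $U^{\mathcal N}\subseteq U^{\mathcal D}$, and for every $V\in U^{\mathcal N}$ one has $V_0=U_0$ (since $U_0\unlhd\mathcal N$), so $\N_{\N_T(U_0)}(V)=\N_T(V)$; hence $|\N_T(U)|$ is already maximal over $U^{\mathcal N}$. Now $\mathcal N$ is saturated by \ref{ElPropN(P_0)}, so $\Aut_T(U)\in\Syl_p(\Aut_{\mathcal N}(U))$. You can then apply \ref{AcleqAutN} with $R=U$ itself (no auxiliary $V$ needed), combine with \ref{ElPropN(P_0)} to get $O^p(\Aut_{\mathcal N}(U))=\Ac(U)$, and conclude $\Aut_{\mathcal N}(U)=\A(U)$. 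For the extension, $U\in\mathcal N^f$ makes $U$ receptive in $\mathcal N$, and your observation $N_\varphi^{\mathcal N}=N_\varphi\cap T$ (via \ref{NphicapR}) finishes it in one line. All of the transport along $\hat\alpha$, the ``obstacle'' you identify, and the model-theoretic workaround become unnecessary. This is exactly the paper's proof.
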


\begin{proof}
By \ref{ElPropN(P_0)}, $\m{N}$ is a saturated subsystem of $\m{D}$. As $U\in\m{D}^f$ and $\m{N}$ is a subsystem of $\m{D}$ on $\N_T(U_0)\geq \N_T(U)$, it follows $U\in\m{N}^f$ and $\Aut_T(U)\in \Syl_p(\Aut_\m{N}(U))$. By \ref{ElPropN(P_0)} and  \ref{AcleqAutN}, $O^p(\Aut_\m{N}(U))=\Ac(U)$, which implies $\A(U)=\Aut_\m{N}(U)$. Since $U$ is fully automized in $\m{N}$, by \ref{NphicapR}, $N_\vphi\cap T=N_\vphi\cap N_T(U_0)=N_\vphi^{\m{N}}$ for any $\vphi\in\A(U)$. Now the assertion follows from the fact that $\m{N}$ is saturated.
\end{proof}

\begin{lemma}\label{G0GoodCons}
 Let $U\in\m{G}_0$ and $\vphi\in \A(U)$. Then there exists $\chi\in \Ac(U)$ such that $\vphi\chi$ extends to a member of $\Aut_\m{D}(\N_T(U))$.
\end{lemma}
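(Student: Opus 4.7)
The plan is to use a Sylow/Frattini argument to replace $\vphi$ by a suitable $\vphi\chi$ whose $N$-group contains $\N_T(U)$, and then invoke \ref{G0Good} to produce the desired extension.

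First I would record two facts. Since $U \in \m{G}_0$, by \ref{G0Good} we have $\Aut_T(U) \in \Syl_p(\A(U))$, and every element of $\A(U)$ extends to a member of $\Hom_\m{D}(N_\psi \cap T, T)$. Secondly, since $U_0 \in \F_0^{fc}$, by \ref{Ac} we have $\Ac(U) = O^p(\Aut_\m{D}(U))$, which is normal in $\Aut_\m{D}(U)$ and therefore normal in the subgroup $\A(U) = \Aut_T(U)\Ac(U)$.

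Next I would run the Sylow argument. Set $S_1 := \Aut_T(U) \in \Syl_p(\A(U))$. Since $\A(U) = S_1 \Ac(U)$ with $\Ac(U) \unlhd \A(U)$, any element of $\A(U)$ decomposes as $s\eta$ with $s \in S_1$ and $\eta \in \Ac(U)$. Thus every $\A(U)$-conjugate of $S_1$ has the form $S_1^\eta$ with $\eta \in \Ac(U)$, i.e.\ the Sylow $p$-subgroups of $\A(U)$ form a single $\Ac(U)$-orbit. Applied to the Sylow subgroup $S_1^\vphi$, this yields $\chi \in \Ac(U)$ with $(S_1^\vphi)^\chi = S_1$, equivalently $S_1^{\vphi\chi} = S_1$, so that $\vphi\chi$ normalizes $\Aut_T(U)$ inside $\A(U)$.

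Finally I would verify that this $\chi$ does the job. For every $g \in \N_T(U)$, the conjugation normalization just obtained gives $(c_g|_U)(\vphi\chi)^* \in \Aut_T(U) \le \Aut_S(U)$, which shows $\N_T(U) \le N_{\vphi\chi} \cap T$. By \ref{G0Good}, $\vphi\chi$ extends to some $\hat{\vphi} \in \Hom_\m{D}(N_{\vphi\chi}\cap T, T)$; restricting to $\N_T(U)$, \ref{littleLemma} gives $c_{g\hat{\vphi}}|_U = (c_g|_U)(\vphi\chi)^* \in \Aut_T(U)$ for $g \in \N_T(U)$, so $g\hat{\vphi} \in \N_T(U)$. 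By injectivity and order, $\hat{\vphi}|_{\N_T(U)} \in \Aut_\m{D}(\N_T(U))$, which extends $\vphi\chi$ as required. The only mildly delicate point is the transitivity of the $\Ac(U)$-action on Sylows of $\A(U)$; everything else is bookkeeping with the $N$-group and \ref{littleLemma}.
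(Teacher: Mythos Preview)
Your argument is correct and follows the same route as the paper's proof: use that $\Aut_T(U)\in\Syl_p(\A(U))$ to find $\chi\in\Ac(U)$ with $\Aut_T(U)(\vphi\chi)^*\leq\Aut_T(U)$, conclude $\N_T(U)\leq N_{\vphi\chi}\cap T$, and then apply \ref{G0Good}. You are simply more explicit on two points the paper leaves implicit: why $\chi$ may be taken in $\Ac(U)$ (your transitivity argument via $\A(U)=\Aut_T(U)\Ac(U)$, for which normality of $\Ac(U)$ is in fact not needed), and why the extension produced by \ref{G0Good} actually lands in $\Aut_\m{D}(\N_T(U))$ rather than merely $\Hom_\m{D}(\N_T(U),T)$ (which follows more directly from $U\hat\vphi=U$ and $U\unlhd\N_T(U)$, forcing $\N_T(U)\hat\vphi\leq\N_T(U)$, hence equality by injectivity).
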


\begin{proof}
 By \ref{G0Good}, $\Aut_T(U)\in \Syl_p(\A(U))$. So as $\Aut_T(U)\vphi^*$ is a $p$-subgroup of $\A(U)$, there exists $\chi\in\Ac(U)$ such that $\Aut_T(U)(\vphi\chi)^*=\Aut_T(U)\vphi^*\chi^*\leq \Aut_T(U)$. Then $\N_T(U)=N_{\vphi\chi}\cap T$, so again by \ref{G0Good}, the assertion follows. 
\end{proof}

\begin{lemma}\label{ConjH1}
 Let $P\in \m{H}_0$. Then $P^\m{D}\cap \m{G}_0\neq\emptyset$. In particular, $P^\m{D}\cap \m{G}\neq \emptyset$ for $P\in\m{H}$.
\end{lemma}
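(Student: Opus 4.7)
The plan is to convert $P$ into an element of $\m{G}_0$ by two successive $\m{D}$-conjugations, the first arranging the $S_0$-part to be well-placed and the second arranging the whole subgroup to be fully $\m{D}$-automized. For the first step, $P_0 \in \F_0^c$ allows an application of Lemma \ref{wellplacedNormConj}, giving $\alpha \in \Hom_\m{D}(\N_T(P_0), T)$ with $P_0\alpha$ well-placed. Restricting $\alpha$ to $P \leq \N_T(P_0)$ and using strong closure of $S_0$ in $\F$ to identify $(P\alpha)_0 = P_0\alpha$, I may replace $P$ by $P\alpha$ and assume from the start that $P_0$ itself is well-placed, so in particular $P_0 \in \F_0^{fc}$.

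With $P_0$ well-placed, I pass to a saturated normalizer system $\m{N} \in \mathfrak{N}(P_0)$. By Lemma \ref{ElPropN(P_0)}, $\m{N}$ is a saturated subsystem of $\m{D}$ on $\N_T(P_0)$ with $P_0 \unlhd \m{N}$. Saturation of $\m{N}$ then produces $\beta \in \Hom_\m{N}(P, \N_T(P_0))$ such that $R := P\beta \in \m{N}^f$. Normality of $P_0$ in $\m{N}$, combined again with strong closure of $S_0$, forces $R_0 = P_0\beta = P_0$; in particular $R_0$ remains well-placed and $R_0 \in \F_0^{fc}$. Thus $R \in P^\m{D}$ satisfies the second of the two conditions defining $\m{G}_0$, and everything reduces to verifying $R \in \m{D}^f$.

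For this verification, Lemma \ref{Ac} and Lemma \ref{AcleqAutN} combine to give $O^p(\Aut_\m{D}(R)) = \Ac(R) \leq \Aut_\m{N}(R)$, so $\Aut_\m{N}(R)/\Ac(R)$ is a $p$-group; since $R \in \m{N}^f$ gives $\Aut_T(R) \in \Syl_p(\Aut_\m{N}(R))$, this yields $\Aut_\m{N}(R) = \Aut_T(R)\Ac(R) = \A(R)$. The required condition $\Aut_T(R) \in \Syl_p(\Aut_\m{D}(R))$ therefore reduces to the equality $\Aut_\m{D}(R) = \Aut_\m{N}(R)$. Given $\vphi \in \Aut_\m{D}(R)$, Lemma \ref{Lemma3} applied to the well-placed $R_0$ gives $\vphi|_{R_0} \in \Aut_\m{D}(R_0) = \Aut_T(R_0)\Aut_{\F_0}(R_0) = \Aut_\m{N}(R_0)$. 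Extending $\vphi|_{R_0}$ to an $\m{N}$-morphism on $\N_T(R_0)$ using $R_0 \unlhd \m{N}$, restricting to $R$, and comparing with $\vphi$ by means of $\m{D}$-receptivity of $R_0$ (Lemma \ref{P0Extend}) together with a coprime-action argument modelled on the proof of Lemma \ref{P0Extend}, one deduces $\vphi \in \Aut_\m{N}(R)$. I expect this last comparison — showing that every $\m{D}$-automorphism of $R$ is already visible in $\m{N}$ — to be the main obstacle; the ingredients (extension via normality of $R_0$ in $\m{N}$, $\m{D}$-receptivity of $R_0$, and the control of $\Aut_\m{D}(R_0)$ coming from well-placedness) are all in place, but fitting them together cleanly into the required identification is where the work lies.
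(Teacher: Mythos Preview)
Your approach reverses the order of the two conjugations compared to the paper, and this reversal creates a genuine gap. The paper first picks $R \in P^{\m{D}} \cap \m{D}^f$ (a fully $\m{D}$-normalized conjugate always exists, just by maximizing $|\N_T(\,\cdot\,)|$), and only \emph{then} applies Lemma~\ref{wellplacedNormConj} to $R_0$ to obtain $\alpha \in \Hom_{\m{D}}(\N_T(R_0),T)$ with $R_0\alpha$ well-placed. Because $\N_T(R)\leq \N_T(R_0)$, the map $\alpha$ is defined on the entire normalizer of $R$; hence $Q:=R\alpha$ satisfies $|\N_T(Q)|\geq |\N_T(R)\alpha|=|\N_T(R)|$, so $Q$ is again in $\m{D}^f$, while $Q_0=R_0\alpha$ is well-placed and thus in $\F_0^{fc}$. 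That is the entire argument.

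You instead make $P_0$ well-placed first and then pass to $R\in\m{N}^f$, leaving yourself with the task of showing $R\in\m{D}^f$. There are two problems. First, membership in $\m{G}_0$ requires $R$ to be \emph{fully $\m{D}$-normalized}, not merely fully $\m{D}$-automized; since $\m{D}$ is not yet known to be saturated at this point in the paper, you cannot pass from one to the other. Your sketch only aims at $\Aut_T(R)\in\Syl_p(\Aut_{\m{D}}(R))$, which is the wrong target. Second, even that weaker target rests on the identification $\Aut_{\m{D}}(R)=\Aut_{\m{N}}(R)$, which you do not carry out and which is essentially the content of Lemma~\ref{MainLemma} (proved later, and using the present lemma via \ref{DGenG}). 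There is no evident way to close either gap with the tools available here; the fix is simply to swap the two steps so that the map producing the well-placed $S_0$-part is defined on all of $\N_T(R)$ and hence preserves the fully normalized condition automatically.
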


\begin{proof}
Let $R\in P^\m{D}\cap \m{D}^f$. By \ref{wellplacedNormConj}, there exists  $\alpha\in \Hom_\m{D}(\N_T(R_0),T)$ such that $Q_0:=R_0\alpha$ is well-placed. Note that $R\leq \N_T(R)\leq \N_T(R_0)$. So $Q:=R\alpha$ is well-defined, and $Q\in\m{D}^f$ as $R\in\m{D}^f$. Moreover, $Q\in R^{\m{D}}=P^{\m{D}}$ and $Q_0\in\F_0^f$ as $Q_0$ is well-placed. By \ref{PrelI}(a), $Q_0\in\F_0^c$ as $P_0\in\F_0^c$. This proves the assertion.
\end{proof}

\begin{lemma}\label{DGenG}
 We have $\m{D}=\<\Ac(P):P\in \m{G}\>_T$. In particular, $\m{D}=\<\Ac(P):P\in \m{H}\>_T$.
\end{lemma}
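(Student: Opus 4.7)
The inclusion $\<\Ac(P):P\in\m{G}\>_T\subseteq\m{D}$ is immediate from Lemma~\ref{Ac}, which gives $\Ac(Q)\leq\Aut_\m{D}(Q)$ for every $Q\in\m{H}_0\supseteq\m{G}$. Once the first equality is established, the ``in particular'' statement follows from $\m{G}\subseteq\m{H}$ via the sandwich $\<\Ac(P):P\in\m{G}\>_T\subseteq\<\Ac(P):P\in\m{H}\>_T\subseteq\m{D}$. So the task reduces to showing $\Ac(P)\subseteq\m{D}'':=\<\Ac(Q):Q\in\m{G}\>_T$ for every $P\in\m{H}_0$, because $\m{D}=\<\Ac(P):P\in\m{H}_0\>_T$ by definition.

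My plan is a downward induction on $|P|$, with the strengthened hypothesis that every $\m{D}$-morphism whose source has order strictly larger than $|P|$ lies in $\m{D}''$. The base case $P=T$ is immediate: $T_0=S_0\in\F_0^{fc}$, and $T$ is trivially fully normalized, fully centralized and $\m{D}$-centric, so $T\in\m{G}$. For the inductive step with $P<T$, invoke Lemma~\ref{ConjH1} together with its proof to obtain $Q\in P^\m{D}\cap\m{G}_0$ and to factor the isomorphism $P\to Q$ in $\m{D}$ as $\beta\gamma$, where $\beta\in\Iso_\m{D}(P,R)$ with $R\in P^\m{D}\cap\m{D}^f$, and $\gamma:=\hat\gamma|_R$ is the restriction of some $\hat\gamma\in\Hom_\m{D}(\N_T(R_0),T)$ produced by Lemma~\ref{wellplacedNormConj}. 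Since $T$ is nilpotent and $P<T$, we have $R<\N_T(R)\leq\N_T(R_0)$, so $|\N_T(R_0)|>|R|=|P|$, and the strengthened hypothesis puts $\hat\gamma$ and hence $\gamma,\gamma^{-1}$ into $\m{D}''$. A parallel argument, applied with $P$ in place of $R$, handles $\beta,\beta^{-1}$.

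Given $\vphi\in\Ac(P)$, strong closure of $S_0$ in $\F$ together with $\F_0\unlhd\F$ shows that $\vphi^{\beta\gamma}:=(\beta\gamma)^{-1}\vphi(\beta\gamma)$ lies in $\Ac(Q)\subseteq\A(Q)$. Lemma~\ref{G0GoodCons} then supplies $\chi\in\Ac(Q)$ such that $\vphi^{\beta\gamma}\chi$ extends to $\tilde\vphi\in\Aut_\m{D}(\N_T(Q))$. By nilpotency, $Q<\N_T(Q)$, and $\N_T(Q)\in\m{H}_0$ because $\N_T(Q)\cap S_0\supseteq Q_0\in\F_0^c$ while overgroups of $\F_0$-centric subgroups of $S_0$ remain $\F_0$-centric. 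The strengthened hypothesis yields $\tilde\vphi\in\m{D}''$, and restriction gives $\vphi^{\beta\gamma}\chi\in\m{D}''$. Iterating the same Lemma~\ref{G0GoodCons} trick on $\chi\in\A(Q)$ puts $\chi\in\m{D}''$, so $\vphi^{\beta\gamma}\in\m{D}''$, and finally $\vphi=(\beta\gamma)\vphi^{\beta\gamma}(\beta\gamma)^{-1}\in\m{D}''$.

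The main obstacle is sustaining the strengthened induction hypothesis about arbitrary $\m{D}$-morphisms, because the extended map $\tilde\vphi$ sits in $\Aut_\m{D}(\N_T(Q))$ and is not \textit{a priori} in $\Ac(\N_T(Q))$. Lemma~\ref{G0Good} resolves this: after conjugating to a fully $\m{D}$-normalized representative $U\in\m{G}_0$ of the relevant $\m{D}$-class, the automizer factors as $\Aut_\m{N}(U)=\A(U)=\Aut_T(U)\Ac(U)$, where $\Aut_T(U)$ is trivially in $\m{D}''$ and $\Ac(U)$ is caught by a further application of the induction. All the conjugating morphisms needed in this bookkeeping are themselves restrictions of the maps produced by Lemma~\ref{wellplacedNormConj}, defined on $\N_T(\cdot)$ strictly larger than $P$, and hence are dispatched by the same induction.
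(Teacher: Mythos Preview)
Your overall architecture---downward induction on $|P|$, which is the contrapositive of the paper's ``maximal counterexample'' argument---is the same as the paper's, and the strengthened induction hypothesis is legitimate: any $\m{D}$-morphism with source of order $>|P|$ decomposes as a composite of restrictions of elements of $\Ac(P_i)$ with $|P_i|>|P|$, exactly as in the paper's step~(\ref{Gen1}). However, two steps in your argument do not go through as written.

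First, the ``parallel argument'' for $\beta$ is not valid. The map $\gamma$ lies in $\m{D}''$ because it is literally the restriction of $\hat\gamma$, which is defined on $\N_T(R_0)\gneq R$. But $\beta$ is an arbitrary $\m{D}$-isomorphism from $P$ to a fully $\m{D}$-normalized conjugate $R$; it is a morphism between subgroups of order exactly $|P|$ and is not furnished as the restriction of anything larger. Applying Lemma~\ref{wellplacedNormConj} with $P$ in place of $R$ produces a \emph{different} morphism whose target need not be fully $\m{D}$-normalized, so you cannot bypass the passage through $R$. The paper avoids this by not claiming that a specific conjugating map lies in $\m{D}_0$: instead it shows $P^{\m{D}}=P^{\m{D}_0}$ (so \emph{some} $\m{D}_0$-isomorphism $P\to Q$ exists), and then uses that $\Ac$ is $\F$-equivariant to transport the failure $\Ac(P)\not\leq\Aut_{\m{D}_0}(P)$ to every $\m{D}$-conjugate.

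Second, and more seriously, the line ``Iterating the same Lemma~\ref{G0GoodCons} trick on $\chi\in\A(Q)$ puts $\chi\in\m{D}''$'' does not terminate. Each application of Lemma~\ref{G0GoodCons} to an element of $\Ac(Q)$ only shows that it lies in $\m{D}''$ \emph{modulo} another element of $\Ac(Q)$, so you are left with exactly the same problem. This works if $Q\in\m{G}$, since then $\Ac(Q)\leq\Aut_{\m{D}''}(Q)$ by definition; but Lemma~\ref{ConjH1} only places $Q$ in $\m{G}_0$, and the whole difficulty of the lemma is the case $Q\in\m{G}_0\setminus\m{G}$, i.e.\ $Q\notin\m{D}^c$. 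The paper handles precisely this case by exploiting non-centricity: it passes to a conjugate $U$ with $\C_T(U)\not\leq U$, moves inside the saturated local subsystem $\m{N}\in\mathfrak{N}(U_0)$ to an $\m{N}$-fully normalized $R$, and then extends each $p'$-element of $\Ac(R)$ to a $p'$-element of $\Aut_\m{N}(R\C_T(R))$, which by Lemma~\ref{ElPropN(P_0)} lies in $\Ac(R\C_T(R))$ with $|R\C_T(R)|>|P|$. That extension step is the missing idea in your argument.
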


\begin{proof}
Set $\m{D}_0:=\<\Ac(P):P\in\m{G}\>_T$ and assume $\m{D}_0\neq \m{D}$. By definition of $\m{D}$, there exists then $P\in\m{H}_0$ such that $\Ac(P)\not\leq \Aut_{\m{D}_0}(P)$. We choose $P$ such that $|P|$ is maximal subject to these properties. We show first:
\begin{equation}\label{Gen1}
P^{\m{D}}=P^{\m{D}_0}. \mbox{ In particular, }\Ac(Q)\not\leq \Aut_{\m{D}_0}(Q)\mbox{ for all }Q\in P^{\m{D}}.
\end{equation}
For the proof of (\ref{Gen1}) let $Q\in P^{\m{D}}$ and $\vphi\in \Iso_\m{D}(P,Q)$. We will show that $Q\in P^{\m{D}_0}$. By definition of $\m{D}$, there exists $P_1,\dots,P_n\in\m{H}_0$, $\vphi_i\in\Ac(P_i)$ and $t\in T$ such that $\vphi=\vphi_1\dots\vphi_n c_t$. As $c_t$ is a morphism in $\m{D}_0$, we may assume that $t=1$. Set now $\psi:=\prod_{i\leq n, |P_i|>|P|}\vphi_i$. Observe  that $\psi\in \Hom_\m{D}(P,Q)$ is a well-defined morphism. Because of the maximality of $|P|$, $\vphi_i$ is a $\m{D}_0$-morphism, for every $i\leq n$ with $|P_i|>|P|$. Hence, $\psi\in \Hom_{\m{D}_0}(P,Q)$ and $Q\in P^{\m{D}_0}$. This proves (\ref{Gen1}).

\smallskip

By \ref{PrelI}(a), $\m{H}_0$ is invariant under taking $\F$-conjugates. Hence, by (\ref{Gen1}), we may replace $P$ by any $\m{D}$-conjugate of $P$. 
By \ref{ConjH1}, there exists $Q\in P^{\m{D}}\cap \m{G}_0$, so replacing $P$ by $Q$ we may assume $P\in\m{G}_0$. If $P\in\m{G}$ then, by definition of $\m{D}_0$, $\Ac(P)\leq \Aut_{\m{D}_0}(P)$ contradicting the choice of $P$. Hence, as $P\in\m{G}_0$, $P\not\in\m{D}^c$, i.e. we can choose $U\in P^\m{D}$ such that $\C_T(U)\not\leq U$. By \ref{wellplacedNormConj}, there exists $\xi\in \Hom_{\m{D}}(\N_T(U_0),T)$ such that $U_0\xi$ is well-placed. Then $\C_T(U\xi)\geq \C_T(U)\xi\not\leq U\xi$. Thus, replacing $U$ by $U\xi$, we may assume that $U_0$ is well-placed and, in particular, $U_0\in\F_0^f$. Then by \ref{PrelI}(a), $U_0\in \F_0^{fc}$. Let $\m{N}\in\mathfrak{N}(U_0)$. Then by \ref{ElPropN(P_0)}, $\m{N}$ is a saturated subsystem of $\m{D}$ on $\N_T(U_0)$ with $U_0\unlhd \m{N}$. In particular, by \cite[Lemma~II.3.1]{Aschbacher/Kessar/Oliver:2011a}, there exists $\gamma\in \Hom_\m{N}(\N_T(U),\N_T(U_0))$ such that $R:=U\gamma\in\m{N}^f$. Then $\C_T(R)\geq \C_T(U)\gamma\not\leq U\gamma=R$ and thus $R<\wt{R}:=R\C_T(R)$. The maximality of $|P|$ yields now $\Ac(\wt{R})\leq \Aut_{\m{D}_0}(\wt{R})$. Let $\alpha\in \Ac(R)$ be a $p^\prime$-element. By \ref{AcleqAutN}, $\alpha\in \Ac(R)\leq \Aut_{\m{N}}(R)$. So as $\m{N}$ is saturated, $\alpha$ extends to $\hat{\alpha}\in \Aut_\m{N}(\wt{R})$. As $\alpha$ is a $p^\prime$-element, we can choose $\hat{\alpha}$ to be a $p^\prime$-element. Then $\hat{\alpha}\in O^p(\Aut_\m{N}(\wt{R}))\leq \Ac(\wt{R})\leq \Aut_{\m{D}_0}(\wt{R})$ by \ref{ElPropN(P_0)}. Hence, $\alpha=\hat{\alpha}|_R\in \Aut_{\m{D}_0}(R)$. This shows $\Ac(R)\leq \Aut_{\m{D}_0}(R)$. As $R\in P^\m{D}$ this is a contradiction to (\ref{Gen1}).
\end{proof}

\begin{lemma}\label{MainLemma}
 Let $P\in\m{H}$. Then $P$ is $\m{D}$-receptive and, if $P\in\m{D}^f$, then $\Aut_T(P)\in \Syl_p(\Aut_\m{D}(P))$ and $\Aut_\m{D}(P)=\A(P)$. 
\end{lemma}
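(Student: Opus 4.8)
\textbf{Proof plan for Lemma~\ref{MainLemma}.}

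The plan is to run an induction on $|T/P|$ (equivalently, a downward induction on $|P|$), handling the case $P=S_0$ essentially separately and then bootstrapping up. First I would observe that by \ref{ConjH1} every $P\in\m{H}$ has a $\m{D}$-conjugate $Q$ lying in $\m{G}=\m{G}_0\cap\m{D}^c$; since $\m{D}$-receptivity and the Sylow/automizer statements are preserved under passing along an $\m{D}$-isomorphism (using \ref{littleLemma} to transport $N_\vphi$ and $\Aut_T$-subgroups, exactly as in \ref{RemFullAut} and \ref{wellplacedNormConj}), it suffices to prove the assertion for $P\in\m{G}$, and for such $P$ we may further use \ref{wellplacedNormConj} to arrange that $P_0$ is well-placed, hence $P_0\in\F_0^{fc}$ and $\mathfrak{N}(P_0)\ne\emptyset$. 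Fix $\m{N}\in\mathfrak{N}(P_0)$, a saturated subsystem of $\m{D}$ on $\N_T(P_0)$ with $P_0\unlhd\m{N}$ by \ref{ElPropN(P_0)}. Since $P\in\m{G}\subseteq\m{G}_0$, Lemma~\ref{G0Good} already gives $\Aut_\m{N}(P)=\A(P)$, $\Aut_T(P)\in\Syl_p(\A(P))$, and that every $\vphi\in\A(P)$ extends to a member of $\Hom_\m{D}(N_\vphi\cap T,T)$; so the bulk of the ``$P\in\m{D}^f$'' half of the statement reduces to showing $\Aut_\m{D}(P)=\A(P)$, i.e. that $\A(P)=\Aut_T(P)\Ac(P)$ is all of $\Aut_\m{D}(P)$, which by \ref{DGenG} and Alperin's fusion theorem amounts to a descent argument over $\m{D}$-conjugates of larger subgroups.

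For the receptivity half, let $Q\in P^\m{D}$ and $\alpha\in\Iso_\m{D}(Q,P)$ with $P\in\m{D}^f$; I want to extend $\alpha$ to $\Hom_\m{D}(N_\alpha^\m{D},T)$. The strategy mirrors the proof of \ref{P0Extend}: first move the problem inside $S_0$. Since $P_0$ is well-placed, \ref{P0Extend} tells us $P_0$ is $\m{D}$-receptive, so the restriction $\alpha|_{Q_0}\in\Iso_\m{D}(Q_0,P_0)$ extends to some $\hat\alpha\in\Hom_\m{D}(N_{\alpha|_{Q_0}}^\m{D},T)$; note $N_\alpha^\m{D}\cap S_0\le N_{\alpha|_{Q_0}}^\m{D}$, but actually I'd want $N_\alpha^\m{D}\le$ (something related to $N_{\hat\alpha}$), which is where \ref{NewLemma} enters — applied with $(\m{D},\alpha|_{P_0\text{-side}},\hat\alpha)$ in the roles of $(\m{E},\alpha,\hat\alpha)$ — to control $\Aut_{N_\alpha^\m{D}}(Q_0)\hat\alpha^*$ modulo $\C_{\Aut_\m{D}}(P_0)$ and to show the relevant $T$-normalizers match up. Having reduced to automorphisms of $P_0$ and then to $\m{N}$, I would invoke that $\m{N}$ is saturated and that $P\in\m{N}^f$ (by \ref{G0Good}, since $P\in\m{G}_0$) to extend inside $\m{N}$, then note $N_\alpha^\m{D}\le N_\alpha^\m{N}$ because $P$ is fully automized in $\m{N}$ (via \ref{NphicapR}) — this is the same mechanism that made \ref{Lemma5} and \ref{G0Good} work.

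The remaining point, $\Aut_\m{D}(P)=\A(P)$ for $P\in\m{G}\cap\m{D}^f$, I would get by maximality: suppose $P$ is a counterexample with $|P|$ maximal. If $P=S_0$ this follows from \ref{S0leqU} (giving $\Aut_T(P)\cap\Ac(P)\in\Syl_p(\Ac(P))$) together with $\Aut_\m{D}(S_0)=\Aut_{\F_0}(S_0)\Aut_T(S_0)$ and \ref{Lemma4}; so $P<\N_T(P)$ (if $P<S_0$ use $P<\N_{S_0}(P)$; if $P\not\ge S_0$ use $PS_0>P$). Take $\vphi\in\Aut_\m{D}(P)$ a $p'$-element; one wants $\vphi\in\Ac(P)$. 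Using that $\m{N}$ is a saturated subsystem of $\m{D}$ on $\N_T(P_0)$ containing $P$ with $P\in\m{N}^f$, and \ref{AcleqAutN} which gives $\Ac(R)\le\Aut_\m{N}(R)$ on $\m{N}$-conjugates, extend $\vphi$ up to $\widehat\vphi\in\Aut_\m{D}(\N_T(P))$ (via \ref{G0GoodCons} after correcting by an element of $\Ac(P)$ — permissible since we only need $\vphi\in\Ac(P)$ and $\Ac(P)$ is invariant under such adjustment), apply the maximality of $|P|$ to $\N_T(P)$ (noting $\N_T(P)\in\m{H}$ and, after a $\m{D}$-conjugation by \ref{ConjH1}, may be taken in $\m{G}$) to conclude $\widehat\vphi\in\A(\N_T(P))$, restrict, and read off $\vphi\in\A(P)$; then $\vphi$ being a $p'$-element forces $\vphi\in O^p(\Aut_\m{D}(P))\Aut_T(P)\cap(\text{p}'\text{-part})$, i.e. $\vphi\in\Ac(P)$ by \ref{Lemma4} and \ref{Ac}. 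The main obstacle I expect is bookkeeping in the receptivity argument — making sure that after the two extension steps (first into $S_0$ via \ref{P0Extend}, then into $\m{N}$) the domains glue to exactly $N_\alpha^\m{D}$ rather than something smaller, which is precisely the role of the two conclusions of \ref{NewLemma} and will require the same kind of $\C_T(P_0)$-counting (\,$|\C_T(Q_0)\hat\alpha|\le|\C_T(P_0)|\le|\C_T(Q_0\hat\alpha)|$ forcing equality\,) that appears in the proof of \ref{P0Extend}.
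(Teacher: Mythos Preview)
Your plan has two genuine gaps, and they sit exactly at the places where the paper does the hard work.

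\textbf{The $\Aut_\m{D}(P)=\A(P)$ step is circular.} You propose to take a $p'$-element $\vphi\in\Aut_\m{D}(P)$ and extend it to $\hat\vphi\in\Aut_\m{D}(\N_T(P))$ ``via \ref{G0GoodCons} after correcting by an element of $\Ac(P)$.'' But \ref{G0GoodCons} has hypothesis $\vphi\in\A(U)$, and likewise any extension through the saturated subsystem $\m{N}$ needs $\vphi\in\Aut_{\m{N}}(P)=\A(P)$ (this equality is \ref{G0Good}). Since $\vphi\in\A(P)$ is precisely what you are trying to prove, you cannot feed $\vphi$ into either of those tools. The paper avoids this entirely: it never shows $\Aut_\m{D}(P)=\A(P)$ directly. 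Instead it proves receptivity of $U$ first (the long steps (\ref{ML4})--(\ref{ML6})), and only then, in the final paragraph, uses receptivity to extend a carefully chosen $p$-element $\alpha\in\N_{S_U}(\Aut_T(U))\setminus\Aut_T(U)$ to $\N_T(U)$; the inductive hypothesis on $\N_T(U)$ then forces $\alpha\in\Aut_T(U)$, a contradiction.

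\textbf{The receptivity step does not produce an extension of $\alpha$.} Applying \ref{P0Extend} to $\alpha|_{Q_0}$ yields $\hat\alpha\in\Hom_\m{D}(N_{\alpha|_{Q_0}}^\m{D},T)$ extending $\alpha|_{Q_0}$, not $\alpha$. On $Q$ we have $\hat\alpha|_{Q_0}=\alpha|_{Q_0}$ but there is no reason for $\hat\alpha|_Q=\alpha$; the difference $\hat\alpha|_Q\cdot\alpha^{-1}$ is only known to centralize $Q_0$, and \ref{NewLemma} does not close that gap (it controls where $\Aut_{N_\alpha}(Q_0)$ lands, not whether the extension agrees with $\alpha$ on the rest of $Q$). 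Reducing to $\m{N}$ does not help either: an arbitrary $\alpha\in\Iso_\m{D}(Q,P)$ need not be an $\m{N}$-morphism even after arranging $Q_0=P_0$, so saturation of $\m{N}$ gives you nothing to extend. The paper's route is genuinely different: it proves the technical statement (\ref{ML4}) (any $\alpha$ that already admits a partial extension to some $V_1>V$ can, after adjusting by $\Ac(U)$, be extended to all of $\N_T(V)$), by a nested maximality argument using the inductive hypothesis on subgroups strictly larger than $U$. Then (\ref{ML5}) decomposes a general $\alpha$ via \ref{DGenG} into factors coming from $\A(P_i)$ with $P_i\in\m{G}$, handling each factor with either \ref{G0GoodCons} (when $V_k=P_k$) or (\ref{ML4}) (when $V_k<P_k$). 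This decomposition step, not \ref{P0Extend}, is what makes receptivity go through.

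A smaller point: ``$\m{D}$-receptivity is preserved under $\m{D}$-isomorphism'' is false as stated; the paper transfers between conjugates using that all members of $\m{H}$ are $\m{D}$-centric together with \cite[Lemma~I.2.6(c)]{Aschbacher/Kessar/Oliver:2011a} and \cite[Lemma~2.3(a)]{BCGLO2005}, and this only works once one has both receptivity and full automization for the chosen conjugate $U\in\m{G}$.
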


\begin{proof}
For the proof note first that, by \ref{Ac}, for any $P\in\m{H}$, we have $\Aut_\m{D}(P)=\A(P)$ provided $\Aut_T(P)\in \Syl_p(\Aut_\m{D}(P))$. So assuming the assertion is wrong, there exists $P\in\m{H}$ such that $P$ is not $\m{D}$-receptive, or $P\in\m{D}^f$ and $P$ is not fully $\m{D}$-automized. In particular, there exists $X\in \m{H}$ such that one of the following holds:
\begin{itemize}
 \item [(i)] $X$ is not $\m{D}$-receptive.
 \item [(ii)] There exists a fully normalized $\m{D}$-conjugate of $X$ which is not fully automized.
\end{itemize}
We choose such $X$ of maximal order. By \ref{ConjH1}, there exists $U\in\m{G}\cap X^\m{D}$. The maximality of $|X|=|U|$ yields:
\begin{eqnarray}\label{ML2}
&&\mbox{For any }Y\in\m{H}\mbox{ with $|Y|>|U|$, $Y$ is $\m{D}$-receptive and, if $Y\in\m{D}^f$,}\\ \nonumber
&&\mbox{then $Y$ is fully $\m{D}$-automized and }\Aut_\m{D}(Y)=\A(Y).
\end{eqnarray}
Next we show the following property:
\begin{eqnarray}\label{ML1}
 \mbox{$U$ is not fully $\m{D}$-automized or not $\m{D}$-receptive.}
\end{eqnarray}
For the proof of (\ref{ML1}) note first that $U$ is fully $\m{D}$-centralized as $U\in\m{D}^c$. Hence, if (ii) holds, then by \cite[Lemma~2.3(a)]{BCGLO2005} applied with $X^\m{D}$ in place of $\m{H}$, $U$ is not fully $\m{D}$-automized. If (ii) is false, then in particular, $U$ is fully $\m{D}$-automized. Moreover, (i) holds, so as $X\in\m{D}^c$ is fully $\m{D}$-centralized, by \cite[Lemma~I.2.6(c)]{Aschbacher/Kessar/Oliver:2011a}, $U$ is not $\m{D}$-receptive. This proves (\ref{ML1}).

\smallskip

Clearly $T$ is $\m{D}$-receptive. Moreover, by construction of $\m{D}$, $\Aut_\m{D}(T)=\Inn(T)\Ac(T)=\A(T)$. So by \ref{G0Good}, $T$ is fully $\m{D}$-automized. This shows:
\begin{equation}\label{ML3}
 U\neq T.
\end{equation}
We show next: 
\begin{eqnarray}\label{ML4}
&&\mbox{Let $V\in U^\m{D}$, $V<V_1\leq \N_T(V)$ and $\alpha\in\Iso_\m{D}(V,U)$ such that $\alpha$ extends}\\ \nonumber  
&&\mbox{to an element }\hat{\alpha}\in \Hom_\m{D}(V_1,T).\mbox{ Then there exists }\chi\in\Ac(U)\mbox{ such that}\\ \nonumber  
&&\mbox{$\alpha\chi$ extends to an element of $\Hom_\m{D}(\N_T(V),\N_T(U))$.} 
\end{eqnarray}
We prove (\ref{ML4}) by contradiction. Let $(V,V_1,\alpha,\hat{\alpha})$ be a counterexample to (\ref{ML4}) such that first $|V_1|$, then the order of $V_2:=\N_T(V)\cap \N_T(V_1)$, and then the order of $\N_T(U)\cap \N_T(V_1\hat{\alpha})$ is maximal. Set 
$$U_1:=V_1\hat{\alpha}\mbox{ and }U_2:=\N_T(U)\cap \N_T(U_1).$$
As $(V,V_1,\alpha,\hat{\alpha})$ is a counterexample, $V_1<\N_T(V)$ and thus $V_1<V_2$. As $U\in\m{D}^f$, $|\N_T(U)|\geq |\N_T(V)|>|V_1|=|U_1|$. Thus $U_1<\N_T(U)$ and $U_1<U_2$.

\smallskip

Let $R_1\in U_1^\m{D}\cap \m{D}^f$ and $\beta\in \Hom_\m{D}(U_1,R_1)$. As $R_1\in\m{D}^f$ and $|R_1|=|V_1|>|V|=|U|$, it follows from (\ref{ML2}) that $\Aut_T(R_1)\in \Syl_p(\Aut_\m{D}(R_1))$. Hence, there exists $\mu\in \Aut_\m{D}(R_1)$ such that $\N_{\Aut_\m{D}(U_1)}(U)\beta^*\mu^*\cap \Aut_T(R_1)\in \Syl_p(\N_{\Aut_\m{D}(U_1)}(U)\beta^*\mu^*)$. So replacing $\beta$ by $\beta\mu$, we can assume $\N_{\Aut_\m{D}(U_1)}(U)\beta^*\cap \Aut_T(R_1)\in \Syl_p(\N_{\Aut_\m{D}(U_1)}(U)\beta^*)$. Setting
$$R:=U\beta\mbox{ and }R_2:=\N_T(R)\cap \N_T(R_1)$$
this gives
$$\Aut_{R_2}(R_1)=\N_{\Aut_\m{D}(R_1)}(R)\cap \Aut_T(R_1)\in \Syl_p(\N_{\Aut_\m{D}(R_1)}(R)).$$
As $\Aut_{U_2}(U_1)\beta^*$ is a $p$-subgroup of $\N_{\Aut_\m{D}(R_1)}(R)$, there exists $\eta\in \N_{\Aut_\m{D}(R_1)}(R)$ such that $\Aut_{U_2}(U_1)\beta^*\eta^*\leq \Aut_{R_2}(R_1)$. Hence, replacing $\beta$ by $\beta\eta$, we may assume
$$\Aut_{U_2}(U_1)\beta^*\leq \Aut_{R_2}(R_1).$$
Then as $|U|<|U_1|$, it follows from (\ref{ML2}) that $\beta$ extends to $\hat{\beta}\in \Hom_\m{D}(U_2,R_2)$. As $\Aut_{V_2}(V_1)\hat{\alpha}^*\beta^*$ is a $p$-subgroup of $\N_{\Aut_\m{D}(R_1)}(R)$, there exists $\rho\in O^p(\N_{\Aut_\m{D}(R_1)}(R))$ such that $\Aut_{V_2}(V_1)\hat{\alpha}^*\beta^*\rho^*\leq \Aut_{R_2}(R_1)$. Then $\hat{\alpha}\beta\rho\in \Hom_\m{D}(V_1,R_1)$ and $V_2\leq \N_{\hat{\alpha}\beta\rho}^\m{D}$, so again by (\ref{ML2}), $\hat{\alpha}\beta\rho$ extends to an element $\gamma\in \Hom_\m{D}(V_2,R_2)$. Then $\gamma$ extends also $\alpha \beta|_U\rho|_R$. Note that $\rho|_R\in O^p(\Aut_\m{D}(R))$ and thus also $\psi:=\rho|_R(\beta|_U^{-1})^*\in O^p(\Aut_\m{D}(U))=\Ac(U)$ by \ref{Ac}. Observe furthermore $\alpha \beta|_U\rho|_R=\alpha\psi\beta|_U$.

\smallskip

Assume first $V_2\gamma=R_2$. Then setting $\wt{V_2}:=U_2\hat{\beta}\gamma^{-1}$, it follows that $\gamma|_{\wt{V_2}}\hat{\beta}^{-1}\in \Iso_\m{D}(\wt{V_2},U_2)$ extends $\alpha\psi\in \Iso_\m{D}(V,U)$. Recall $|V_1|=|U_1|<|U_2|=|\wt{V_2}|$. Hence, the maximality of $|V_1|$ yields the existence of $\chi_0\in \Ac(U)$ such that $\alpha\psi\chi_0$ extends to an element of $\Hom_\m{D}(\N_T(V),\N_T(U))$. Then (\ref{ML4}) holds with $\chi:=\psi\chi_0\in \Ac(U)$ and so $(V,V_1,\alpha,\hat{\alpha})$ is not a counterexample. This shows $V_2\gamma\neq R_2$ and thus
$$|V_2|<|R_2|.$$
Note that $\wt{\alpha}:=\beta^{-1}|_R\in \Hom_\m{D}(R,U)$ extends to $\beta^{-1}\in \Hom_\m{D}(R_1,U_1)$. Hence, as $|V_2|<|R_2|$, the maximality of $|V_2|$ yields the existence of $\wt{\chi}\in\Ac(U)$ such that $\wt{\alpha}\wt{\chi}$ extends to an element $\vphi\in \Hom_\m{D}(\N_T(R),\N_T(U))$. Then $R\vphi=U$ and $\beta|_U\vphi|_R=\wt{\alpha}^{-1}\vphi|_R=\wt{\chi}\in\Ac(U)$.

\smallskip

Suppose first $|\N_T(U)\cap \N_T(R_1\vphi)|>|U_2|$. Note that $\alpha\wt{\chi}=\alpha\beta|_U\vphi|_R\in \Iso_\m{D}(V,U)$ extends to $\hat{\alpha}\beta\vphi\in \Hom_\m{D}(V_1,T)$ and $V_1\hat{\alpha}\beta\vphi=R_1\vphi$. Now the maximality of the order of $U_2=\N_T(U)\cap \N_T(V_1\hat{\alpha})$ and the assumption $|\N_T(U)\cap \N_T(R_1\vphi)|>|U_2|$ implies that there exists $\chi_0\in\Ac(U)$ such that $\alpha\wt{\chi}\chi_0$ extends to an element of $\Hom_\m{D}(\N_T(V),\N_T(U))$. Hence, the claim holds with $\chi:=\wt{\chi}\chi_0$, so $(V,V_1,\alpha,\hat{\alpha})$ is not a counterexample. This shows
$$|\N_T(U)\cap \N_T(R_1\vphi)|=|U_2|.$$
Therefore
$$|R_2|=|R_2\vphi|\leq |\N_T(U)\cap \N_T(R_1\vphi)|=|U_2|=|U_2\hat{\beta}|\leq |R_2|.$$
Now equality holds above, so $|U_2|=|R_2|$ and $U_2\hat{\beta}=R_2$. Recall that $\alpha\psi\beta|_U$ extends to $\gamma\in \Hom_\m{D}(V_2,R_2)$. Therefore, $\alpha\psi$ extends to $\gamma\hat{\beta}^{-1}\in \Hom_\m{D}(V_2,U_2)$. As $V_1<V_2$, the maximality of $|V_1|$ yields that there exists $\chi_0\in \Ac(U)$ such that $\alpha\psi\chi_0$ extends to an element of $\Hom_\m{D}(\N_T(V),\N_T(U))$. Now it follows with $\chi:=\psi\chi_0\in\Ac(U)$ that $(V,V_1,\alpha,\hat{\alpha})$ is not a counterexample. This final contradiction proves (\ref{ML4}). We show next:
\begin{eqnarray}\label{ML5}
&&\mbox{Let $V\in U^\m{D}$ and $\alpha\in \Hom_\m{D}(V,U)$. Then there exists $\chi\in\Ac(U)$} \\ \nonumber
&&\mbox{such that $\alpha\chi$ extends to an element of $\Hom_\m{D}(\N_T(V),\N_T(U))$.} 
\end{eqnarray}
By \ref{DGenG}, there exist $P_1,\dots,P_m\in\m{G}$ and, for $1\leq i\leq m$,  $\phi_i\in A(P_i)$ such that $\alpha=\phi_1\dots\phi_m$. More precisely,  setting $V_1:=V$, $V_{i+1}:=V_i\phi_i$ and $\varphi_i:=\phi_i|_{V_i}$, we have  $\alpha=\vphi_1\dots\vphi_m$. We will prove the following generalization of (\ref{ML5}):
\begin{itemize}
 \item [(*)] For each $1\leq k\leq m$, there exists $\chi_k\in \Ac(U)$ such that $\vphi_k\dots\vphi_m\chi_k$ extends to an element of $\Hom_\m{D}(\N_T(V_k),\N_T(U))$.
\end{itemize}
To prove (*) consider first the case $m=k$. If $P_m=V_m$ then $U=P_m=V_m$ and $\vphi_m\in A(U)$, so (*) follows from \ref{G0GoodCons}. If $V_m<P_m$ then $V_m<W_m:=\N_{P_m}(V_m)$. Hence, $\vphi_m$ extends to $\phi_m|_{W_k}\in \Hom_\m{D}(W_k,T)$ and (*) follows from (\ref{ML4}). So by induction on $m-k$ we may assume from now on that $k<m$ and for $\mu:=\vphi_{k+1}\dots\vphi_m$, there exists $\chi\in \Ac(U)$ such that $\mu\chi$ extends to an element $\beta\in \Hom_\m{D}(\N_T(V_{k+1}),\N_T(U))$. If $V_k<P_k$ then $V_k<W_k:=\N_{P_k}(V_k)$ and $\vphi_k\mu\chi\in \Hom_\m{D}(V_k,U)$ extends to $\phi_k|_{W_k}\beta\in \Hom_\m{D}(W_k,T)$. Hence, by (\ref{ML4}), there exists $\chi_0\in \Ac(U)$ such that $\vphi_k\mu\chi\chi_0$ extends to an element of $\Hom_\m{D}(\N_T(V_k),\N_T(U))$. Thus, (*) holds in this case for $\chi_k:=\chi\chi_0$. Assume now $V_k=P_k$. Then $V_k=V_{k+1}=P_k\in\m{G}$ and $\vphi_k\in A(V_k)$. Hence, by \ref{G0GoodCons}, there exists $\rho\in \Ac(V_k)$ such that $\vphi_k\rho$ extends to an element $\gamma\in \Aut_\m{D}(\N_T(V_k))$. Then $\gamma\beta\in \Hom_\m{D}(\N_T(V_k),\N_T(U))$  extends $\vphi_k\rho\mu\chi=\vphi_k\mu(\rho\mu^*)\chi$. Observe that $\rho\mu^*\in \Ac(U)$, so (*) holds with $\chi_k:=(\rho\mu^*)\chi\in \Ac(U)$. This completes the proof (*) and thus of (\ref{ML5}). We show next:
\begin{equation}\label{ML6}
 \mbox{$U$ is $\m{D}$-receptive.}
\end{equation}
For the proof of (\ref{ML6}) let $V\in U^{\m{D}}$ and $\alpha\in \Hom_\m{D}(V,U)$. By (\ref{ML5}), there exists $\chi\in \Ac(U)$ such that $\alpha\chi$ extends to $\beta\in \Hom_\m{D}(\N_T(V),\N_T(U))$. As $U\in\m{G}$, it follows from \ref{G0Good} that $\chi^{-1}$ extends to $\eta\in \Hom_\m{D}(N_{\chi^{-1}}\cap T,T)$. By \ref{littleLemma}, $\Aut_{N_\alpha^\m{D}}(V)(\alpha\chi)^*=\Aut_{N_\alpha^\m{D}\beta}(U)$ and thus
$$\Aut_{N_\alpha^\m{D}\beta}(U)(\chi^{-1})^*=\Aut_{N_\alpha^\m{D}}(V)\alpha^*\leq \Aut_T(U).$$
Therefore $N_\alpha^\m{D}\beta\leq N_{\chi^{-1}}\cap T$, so $(\beta|_{N_\alpha^\m{D}})\eta\in \Hom_\m{D}(N_\alpha^\m{D},T)$ is well-defined and extends $\alpha$. This proves (\ref{ML6}).

\smallskip

We now derive the final contradiction. By (\ref{ML1}) and (\ref{ML6}), $\Aut_T(U)\not\in \Syl_p(\Aut_\m{D}(U))$. Let $\Aut_T(U)\leq S_U\in \Syl_p(\Aut_\m{D}(U))$. Then $\Aut_T(U)<\N_{S_U}(\Aut_T(U))$. Pick 
$$\alpha\in \N_{S_U}(\Aut_T(U))\backslash \Aut_T(U)$$ 
and note $P:=\N_T(U)=N_\alpha^\m{D}$. So by (\ref{ML6}), $\alpha$ extends to $\hat{\alpha}\in \Aut_\m{D}(P)$. Since $\alpha$ is a $p$-element, we may choose $\hat{\alpha}$ to be a $p$-element. Let $Q\in P^\m{D}\cap \m{D}^f$. By (\ref{ML3}), $|Q|>|U|$. Thus, it follows from (\ref{ML2}) that $\Aut_T(Q)\in \Syl_p(\Aut_\m{D}(Q))$. Hence, there exists $\beta\in \Hom_\m{D}(P,Q)$ such that $\hat{\alpha}\beta^*\in \Aut_T(Q)$. Pick $t\in \N_T(Q)$ such that $\hat{\alpha}\beta^*=c_t|_Q$. As $U\hat{\alpha}=U\alpha=U$, we have $(U\beta)^t=U\beta(\hat{\alpha}\beta^*)=U\hat{\alpha}\beta=U\beta$. Hence, $t\in \N_T(U\beta)$. As $U\in\m{D}^f$, it follows $P\beta=\N_T(U)\beta=\N_T(U\beta)$. Thus, $t\beta^{-1}\in P$ and $\hat{\alpha}=c_t|_Q\beta^{-*}=c_{t\beta^{-1}}|_P\in \Inn(P)$. This implies $\alpha=\hat{\alpha}|_U\in \Aut_T(U)$, contradicting the choice of $\alpha$ and thus completing the proof.
\end{proof}

\begin{lemma}\label{BCGLOAssHelp}
Let $Q\in\m{D}^c\backslash \m{H}$. Then there exists $\alpha\in\Hom_\m{D}(Q,T)$ such that $\C_{S_0}(Q_0\alpha)\not\leq Q_0\alpha$.
\end{lemma}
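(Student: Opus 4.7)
The plan is to produce $\alpha\in\Hom_\m{D}(Q,T)$ with $Q_0\alpha$ fully $\F_0$-normalized in its $\F_0$-conjugacy class. Once this is accomplished, the lemma follows from the standard characterization of non-$\F_0$-centric subgroups: the hypothesis $Q\in\m{D}^c\setminus\m{H}$ is equivalent to $Q_0\notin\F_0^c$, which in turn means $\C_{S_0}(R_0)\not\leq R_0$ for every fully $\F_0$-normalized $R_0$ in the $\F_0$-class of $Q_0$, and in particular for $R_0=Q_0\alpha$.

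To construct such an $\alpha$, I would argue by maximality. Choose $\alpha\in\Hom_\m{D}(Q,T)$ (nonempty, as $\id$ is one) so that $|\N_{S_0}(Q_0\alpha)|$ is as large as possible, and replace $Q$ by $Q\alpha$ so that the maximum is attained at $\alpha=\id$. I claim $Q_0$ is then fully $\F_0$-normalized. Suppose otherwise, and by Lemma~\ref{wellplacedExists} pick a well-placed $R_0\in Q_0^{\F_0}$, which is in particular fully $\F_0$-normalized and satisfies $|\N_{S_0}(R_0)|>|\N_{S_0}(Q_0)|$. Choose $\gamma\in\Iso_{\F_0}(Q_0,R_0)$, which lies in $\F_0\subseteq\m{D}$. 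By Lemma~\ref{Lemma3} applied to the well-placed $R_0$, $\Aut_\m{D}(R_0)=\Aut_T(R_0)\Aut_{\F_0}(R_0)$ with $\Aut_T(R_0)\in\Syl_p(\Aut_\m{D}(R_0))$. Since $Q\leq\N_S(Q_0)$ and $\Aut_Q(Q_0)\gamma^*$ is a $p$-subgroup of $\Aut_\m{D}(R_0)$, Sylow's theorem provides $\rho\in\Aut_{\F_0}(R_0)$ with $\Aut_Q(Q_0)(\gamma\rho^{-1})^*\leq\Aut_T(R_0)$; after replacing $\gamma$ by $\gamma\rho^{-1}\in\Iso_{\F_0}(Q_0,R_0)$, we have $\Aut_Q(Q_0)\gamma^*\leq\Aut_T(R_0)$.

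The main obstacle is then to upgrade $\gamma$ to an $\m{D}$-morphism $\alpha'\in\Hom_\m{D}(Q,T)$ extending $\gamma$ on $Q_0$; such $\alpha'$ would satisfy $Q_0\alpha'=R_0$, contradicting the maximality of $|\N_{S_0}(Q_0)|$ and thereby completing the proof. I plan to build $\alpha'$ by combining $\gamma$ with an appropriate $T$-conjugation: by Remark~\ref{littleLemma}, for any hypothetical extension $\alpha'$ and any $q\in Q$ we must have $c_{q\alpha'}|_{R_0}=c_q|_{Q_0}\gamma^*\in\Aut_T(R_0)$, so $q\alpha'\in\N_T(R_0)\,\C_S(R_0)$; using the strong closure of $T$ (established at the start of Section~\ref{S5}) we obtain $q\alpha'\in\N_T(R_0)$, and consistency of the assignment $q\mapsto q\alpha'$ as a group homomorphism will be enforced via the Frattini-type decomposition in Remark~\ref{Decompose} together with the $\F$-invariance of $\F_0$. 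Verifying that the resulting homomorphism decomposes as a composition of $\m{D}$-generators — that is, that it is genuinely an $\m{D}$-morphism and not merely an $\F$-morphism — is the technical heart of the proof.
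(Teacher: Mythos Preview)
Your overall strategy---reduce to finding $\alpha\in\Hom_{\m{D}}(Q,T)$ with $Q_0\alpha$ fully $\F_0$-normalized, then invoke the failure of $\F_0$-centricity---is sound in its first part. The gap is exactly where you locate it: extending $\gamma\in\Iso_{\F_0}(Q_0,R_0)$ to some $\alpha'\in\Hom_{\m{D}}(Q,T)$. Your sketch for building $\alpha'$ does not work. Knowing that for each $q\in Q$ there is \emph{some} $t_q\in\N_T(R_0)$ with $c_{t_q}|_{R_0}=(c_q|_{Q_0})\gamma^*$ only determines $t_q$ modulo $\C_T(R_0)$, and there is no mechanism here for making a coherent choice that yields a group homomorphism (for instance, if $q^n\in Q_0$ one would need $t_q^{\,n}=\gamma(q^n)$, which is an honest obstruction). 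Remark~\ref{Decompose} and $\F$-invariance of $\F_0$ describe how $\m{D}$-morphisms \emph{between subgroups of $S_0$} factor; they say nothing about lifting to the overgroup $Q\not\leq S_0$. Nor can you appeal to any receptivity result: Lemma~\ref{P0Extend} requires the target to be in $\F_0^c$, and your $R_0$ is $\F_0$-conjugate to $Q_0\notin\F_0^c$, so it is not $\F_0$-centric either. Extending in $\F$ instead (even granting $R_0$ fully $\F$-centralized, which you have not arranged) would land you outside $\m{D}$ with no way back, as you yourself note.

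The paper circumvents this obstacle entirely. It never tries to move $Q_0$ directly to a fully $\F_0$-normalized position. Instead it argues by taking $Q$ a maximal counterexample in $|Q_0|$ and passes to $R_0:=\N_{S_0}(Q_0)$ and $R:=R_0Q$. If $R_0\notin\F_0^c$, then $R$ is a larger non-counterexample and the conclusion pulls back from $R$ to $Q$. If $R_0\in\F_0^c$, then the centric machinery (Lemma~\ref{wellplacedNormConj} and the saturated local subsystems $\m{N}\in\mathfrak{N}(R_0\eta)$ of Lemma~\ref{ElPropN(P_0)}) \emph{does} apply to $R_0$, and one manoeuvres inside $\N_T(R_0)$ to make $Q_0$ fully normalized in $\N_{\F_0}(R_0\gamma)$; a final $\F_0$-map $\delta$ on $\N_{S_0}(Q_0\gamma)$ then produces a fully $\F_0$-normalized conjugate, and the desired centralizer inequality is transported back along $\delta^{-1}$. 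The point is that the extension problem you face is solved one level up, where the target is $\F_0$-centric and Lemma~\ref{P0Extend} is available.
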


\begin{proof}
Assume the assertion is wrong and let $Q$ be a counterexample with $|Q_0|$ maximal. Since $Q\not\in \m{H}$, $Q_0\not\in\F_0^c$. In particular, $Q_0\neq S_0$, so $Q_0<R_0:=\N_{S_0}(Q_0)$. Set $R:=R_0Q$. Suppose first that $R_0\not\in\F_0^c$. Then $R\not\in \m{H}$. As $Q\in\m{D}^c$, we have also $R\in\m{D}^c$. Now the maximality of $|Q_0|$ yields that $R$ is not a counterexample and so there exists $\beta\in \Hom_\m{D}(R,T)$ such that $\C_{S_0}(R_0\beta)\not\leq R_0\beta$. As $Q_0\beta\leq R_0\beta$, it follows $\C_{S_0}(Q_0\beta)\not\leq Q_0\beta$ and the assertion holds with $\alpha=\beta|_Q$, contradicting $Q$ being a counterexample. So we have shown: 
\begin{equation}\label{BCGLO1}
R_0\in\F_0^c.
\end{equation}
We show next:
\begin{equation}\label{BCGLO2}
\mbox{There exists $\gamma\in\Hom_{\m{D}}(R,T)$ such that $R_0\gamma\in\F_0^f$ and $Q_0\gamma\in \N_{\F_0}(R_0\gamma)^f$.}
\end{equation}
Since $R_0\in\F_0^c$ by (\ref{BCGLO1}), it follows from \ref{wellplacedNormConj} that there exists $\eta\in \Hom_{\m{D}}(\N_T(R_0),T)$ such that $R_0\eta$ is well-placed. In particular, $R_0\eta\in\F_0^{fc}$, so we may choose $\m{N}\in\mathfrak{N}(R_0\eta)$. By \ref{ElPropN(P_0)}, $\m{N}$ is a saturated subsystem of $\m{D}$ on $\N_T(R_0\eta)$ with $\N_{\F_0}(R_0\eta)\unlhd\m{N}$ and $R_0\eta\unlhd \m{N}$. Hence, by \cite[Lemma~II.3.1]{Aschbacher/Kessar/Oliver:2011a}, there exists $\mu\in \Hom_{\m{N}}(\N_T(R_0\eta)\cap \N_T(Q_0\eta),\N_T(R_0\eta))$ such that $Q_0\eta\mu\in\m{N}^f$. Then $R_0\eta\mu=R_0\eta$ and, by \ref{PrelII} applied with $(\m{N},\N_{\F_0}(R_0\eta))$ in place of $(\F,\F_0)$, $Q_0\eta\mu\in \N_{\F_0}(R_0\eta)^f$. Observe that $R=R_0Q=\N_{S_0}(Q_0)Q\leq \N_T(Q_0)\cap \N_T(R_0)$ and thus $R\eta\leq \N_T(R_0\eta)\cap \N_T(Q_0\eta)$. Hence, (\ref{BCGLO2}) follows with $\gamma:=\eta\mu|_R$. 

\smallskip

Let now $\gamma$ be as in (\ref{BCGLO2}). As $\F_0$ is saturated, it follows from \cite[Lemma~II.3.1]{Aschbacher/Kessar/Oliver:2011a} that there exists $\delta\in\Hom_{\F_0}(\N_{S_0}(Q_0\gamma),S_0)$ such that $V_0:=Q_0\gamma\delta\in\F_0^f$. By \ref{PrelI}(a), $V_0\not\in\F_0^c$ and so, as $V_0\in\F_0^f$, $\C_{S_0}(V_0)\not\leq V_0$. If $\C_{S_0}(V_0)\not\leq R_0\gamma\delta$ then $\C_{S_0}(V_0)\cap \N_{S_0}(R_0\gamma\delta)\not\leq R_0\gamma\delta\geq V_0$, so $\C_{S_0}(V_0)\cap \N_{S_0}(R_0\gamma\delta)\not\leq V_0$. If $\C_{S_0}(V_0)\leq R_0\gamma\delta$ then also $\C_{S_0}(V_0)\leq \N_{S_0}(R_0\gamma\delta)$. So in any case,
$$\C_{S_0}(V_0)\cap \N_{S_0}(R_0\gamma\delta)\not\leq V_0.$$
By the choice of $\gamma$, $R_0\gamma\in\F_0^f$ and $Q_0\gamma\in \N_{\F_0}(R_0\gamma)^f$. Hence, by \cite[(2.2)(1)]{AschbacherGeneration}, 
$$(\N_{S_0}(Q_0\gamma)\cap \N_{S_0}(R_0\gamma))\delta=\N_{S_0}(V_0)\cap \N_{S_0}(R_0\gamma\delta).$$
Then $(\C_{S_0}(V_0)\cap \N_{S_0}(R_0\gamma\delta))\delta^{-1}\leq \C_{S_0}(Q_0\gamma)$ and $(\C_{S_0}(V_0)\cap \N_{S_0}(R_0\gamma\delta))\delta^{-1}\not\leq V_0\delta^{-1}=Q_0\gamma$. Hence $\C_{S_0}(Q_0\gamma)\not\leq Q_0\gamma$ and the assertion holds with $\alpha=\gamma|_Q$.
\end{proof}

\begin{lemma}\label{BCGLOAss}
Let $Q\in\m{D}^c\backslash \m{H}$. Then there exists $P\in Q^\m{D}$ such that 
$$\Aut_T(P)\cap O_p(\Aut_\m{D}(P))\not\leq \Inn(P).$$
\end{lemma}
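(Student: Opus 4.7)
The plan is to apply Lemma~\ref{BCGLOAssHelp} to obtain $\alpha\in\Hom_\m{D}(Q,T)$ with $\C_{S_0}(Q_0\alpha)\not\leq Q_0\alpha$ and then to set $P:=Q\alpha\in Q^\m{D}\cap \m{D}^c$. Since $S_0$ is strongly closed in $\F$ we have $P_0=Q_0\alpha$, so $C:=\C_{S_0}(P_0)\not\leq P_0$. The goal is to produce a non-inner automorphism of $P$, induced by conjugation by an element of $C\cap\N_T(P)\setminus P$, which lies in $O_p(\Aut_\m{D}(P))$.

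To find such an element, note that $P$ normalizes both $S_0$ and $P_0$, hence normalizes $C$, so $CP$ is a $p$-subgroup of $T$. Since $C\cap P\leq \C_{P_0}(P_0)=Z(P_0)\leq P_0$ and $C\not\leq P_0$, we have $C\not\leq P$, whence $P<CP$. Choose $n\in \N_{CP}(P)\setminus P$ and write $n=xp$ with $x\in C$ and $p\in P$. Then $x=np^{-1}\in C\cap \N_T(P)\setminus P$, and we set $\alpha_0:=c_x|_P\in\Aut_T(P)$. By construction $\alpha_0|_{P_0}=\id_{P_0}$, and since $x\in S_0\unlhd S$ while $x\in\N_T(P)$, also $[P,\alpha_0]=[P,x]\leq P\cap S_0=P_0$. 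Thus $\alpha_0$ acts trivially on the normal chain $1\unlhd P_0\unlhd P$.

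Let $B$ denote the subgroup of $\Aut(P)$ consisting of all automorphisms which act trivially on $P_0$ and on $P/P_0$. By a standard result on $p$-group actions (see e.g. \cite[8.2.2(b)]{KurzweilStellmacher2004}), $B$ is a $p$-group. Because $P_0=P\cap S_0$ and $S_0$ is strongly closed in $\F$, every element of $\Aut_\m{D}(P)$ preserves $P_0$, so $B\cap\Aut_\m{D}(P)$ is a normal $p$-subgroup of $\Aut_\m{D}(P)$ and therefore contained in $O_p(\Aut_\m{D}(P))$. Hence $\alpha_0\in O_p(\Aut_\m{D}(P))\cap\Aut_T(P)$. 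Finally, $\alpha_0\notin\Inn(P)$: if $\alpha_0=c_y|_P$ with $y\in P$, then $xy^{-1}\in\C_T(P)\leq Z(P)\leq P$ (using $P\in\m{D}^c$), forcing the contradiction $x\in P$. This shows $P$ witnesses the conclusion. There is no serious obstacle; the only care needed is in the strong-closure argument that places $\alpha_0$ inside $O_p(\Aut_\m{D}(P))$ rather than merely inside $O_p(\Aut(P))$.
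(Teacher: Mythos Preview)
Your proof is correct and follows essentially the same route as the paper: apply Lemma~\ref{BCGLOAssHelp}, set $P=Q\alpha$, and use an element of $\C_{S_0}(P_0)\cap\N_T(P)\setminus P$ together with the stability result \cite[8.2.2(b)]{KurzweilStellmacher2004} to produce a non-inner element of $\Aut_T(P)\cap O_p(\Aut_\m{D}(P))$. The only difference is cosmetic: the paper works with the whole group $\Aut_{\C_{S_0}(P_0)}(P)$ and argues by contradiction that it cannot lie in $\Inn(P)$, whereas you explicitly extract a single witness $x$ via the normalizer-in-$CP$ argument; this is precisely the step the paper suppresses when it passes from $\Aut_X(P)\leq\Inn(P)$ to $X\leq P$.
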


\begin{proof}
By \ref{BCGLOAssHelp}, there exists $\alpha\in\Hom_\m{D}(Q,T)$ such that $\C_{S_0}(Q_0\alpha)\not\leq Q_0\alpha$. Then for $P:=Q\alpha$, $X:=\C_{S_0}(P_0)\not\leq P_0$. Note that $[P,N_X(P)]\leq P_0$ and $[P_0,X]=1$. So by \cite[8.2.2(b)]{KurzweilStellmacher2004}, $\Aut_X(P)\leq \C_{\Aut_\m{D}(P)}(P/P_0)\cap \C_{\Aut_\m{D}(P)}(P_0)\leq O_p(\Aut_\m{D}(P))$. If $\Aut_X(P)\leq \Inn(P)$ then, as $Q\in\m{D}^c$, $X\leq P$ and thus $X\leq P\cap S_0=P_0$, a contradiction. This proves the assertion.
\end{proof}

\begin{proposition}\label{DSat}
 $\m{D}$ is saturated. 
\end{proposition}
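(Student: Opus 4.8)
The plan is to verify the saturation axioms for $\m{D}=\F_0T$ by reducing, via the results already established, to the subgroups in $\m{H}=\{P\le T: P_0\in\F_0^c\}\cap\m{D}^c$, for which \ref{MainLemma} already gives everything we need. Recall that by a standard reduction (for instance \cite[Lemma~2.4 and Lemma~2.5]{BCGLO2005}, or the analogous statement in \cite{Aschbacher/Kessar/Oliver:2011a}) a fusion system is saturated provided every $\m{D}$-conjugacy class contains a member that is fully automized \emph{and} fully centralized, and every fully centralized subgroup is receptive. Equivalently it suffices to exhibit a set $\m{H}^*$ of subgroups, closed under $\m{D}$-conjugacy and under passing to overgroups, together with the property that every $P\in\m{D}^c$ lies in $\m{H}^*$ or has a $\m{D}$-conjugate $P'$ with $\Aut_T(P')\cap O_p(\Aut_\m{D}(P'))\not\le\Inn(P')$ (this is the hypothesis under which \cite[Prop.~1 or its analogue]{BCGLO2005} lets us ignore $P$), and such that the saturation axioms hold relative to $\m{H}^*$. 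Here $\m{H}^*$ will be $\m{H}$, or more precisely the collection of all $P\le T$ with $P_0\in\F_0^c$ together with all their overgroups (note $\m{H}_0=\{P\le T:P_0\in\F_0^c\}$ is already closed under overgroups since $P\le P'$ forces $P_0\le P'_0$, and it is closed under $\F$-conjugacy by \ref{PrelI}(a)).

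First I would recall the relevant reduction theorem from \cite{BCGLO2005}: a fusion system $\m{D}$ on $T$ is saturated if there is a set $\m{H}^*$ of subgroups of $T$, invariant under $\m{D}$-conjugacy, such that (1) every $P\in\m{H}^*$ that is fully $\m{D}$-normalized is fully $\m{D}$-automized and $\m{D}$-receptive, and (2) every $P\in\m{D}^c\setminus\m{H}^*$ has a $\m{D}$-conjugate $P'$ with $\Aut_T(P')\cap O_p(\Aut_\m{D}(P'))\not\le\Inn(P')$. Then I would take $\m{H}^*:=\m{H}_0\cup\{P\le T: \text{some }\F\text{-conjugate or overgroup of }P\text{ lies in }\m{H}_0\}$; by \ref{PrelI}(a) and the overgroup-closure just noted this is simply $\m{H}_0$ itself, which is $\m{D}$-conjugacy invariant. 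For condition (1): if $P\in\m{H}_0$ is fully $\m{D}$-normalized then $P\in\m{D}^f$, and if moreover $P\in\m{D}^c$ — i.e. $P\in\m{H}$ — then \ref{MainLemma} gives precisely that $P$ is $\m{D}$-receptive, $\Aut_T(P)\in\Syl_p(\Aut_\m{D}(P))$ (so $P$ is fully $\m{D}$-automized), and $\Aut_\m{D}(P)=\A(P)$. For $P\in\m{H}_0\setminus\m{D}^c$, one argues that such $P$ cannot be fully $\m{D}$-normalized, or one simply enlarges $\m{H}^*$ — but the cleanest route is to note that the reduction theorem only requires the receptivity/full-automorphism conclusion for members of $\m{H}^*$ that additionally lie in $\m{D}^c$ (which is exactly the form in which \cite[Thm.~2.2]{BCGLO2005} or \cite[Thm.~I.9.3]{Aschbacher/Kessar/Oliver:2011a} is stated when $\m{H}^*\supseteq\m{D}^c\cap\m{H}^*$ suffices). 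The point is that \ref{MainLemma} handles exactly $\m{H}=\m{H}_0\cap\m{D}^c$, which is all that is needed.

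For condition (2) I would invoke \ref{BCGLOAss}: every $Q\in\m{D}^c\setminus\m{H}=\m{D}^c\setminus\m{H}_0$ (the two complements agree within $\m{D}^c$) has a $\m{D}$-conjugate $P$ with $\Aut_T(P)\cap O_p(\Aut_\m{D}(P))\not\le\Inn(P)$, which is precisely the hypothesis under which the reduction theorem permits us to disregard the subgroups outside $\m{H}$. Finally I would assemble these: the set $\m{H}_0$ is $\m{D}$-invariant; on $\m{H}_0\cap\m{D}^c=\m{H}$ the saturation axioms hold by \ref{MainLemma} (together with \ref{DGenG}, which guarantees $\m{D}$ is generated by the $\Ac(P)=O^p(\Aut_\m{D}(P))$ for $P\in\m{H}$, so that Alperin's fusion theorem applies and no morphisms are missed); and subgroups of $\m{D}^c$ outside $\m{H}$ are irrelevant by \ref{BCGLOAss}. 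Hence \cite[Thm.~2.2]{BCGLO2005} (or its reformulation \cite[Thm.~I.9.3(c) and Prop.~I.9.4]{Aschbacher/Kessar/Oliver:2011a}) yields that $\m{D}$ is saturated.

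The main obstacle is bookkeeping rather than a genuinely new idea: one must cite exactly the right form of the \cite{BCGLO2005} criterion so that the hypotheses line up with \ref{MainLemma} and \ref{BCGLOAss}, and in particular check that $\m{H}_0$ is closed under overgroups and $\F$-conjugacy (immediate from \ref{PrelI}(a)) so that it is an admissible choice of $\m{H}^*$. A secondary point to get right is that full $\m{D}$-automization and receptivity are only asserted in \ref{MainLemma} for $P\in\m{H}=\m{H}_0\cap\m{D}^c$; one should confirm that the reduction theorem indeed only tests these properties on $\m{H}^*\cap\m{D}^c$, which is the case in the version we use. Granting that, the proof is a two-line application of the cited theorem.
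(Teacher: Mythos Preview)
Your proposal is correct and follows essentially the same route as the paper: apply \cite[Thm.~2.2]{BCGLO2005} with the set $\m{H}$, using \ref{PrelI}(a) for closure under $\m{D}$-conjugation, \ref{DGenG} for $\m{H}$-generation, \ref{MainLemma} for $\m{H}$-saturation (noting $\m{H}\subseteq\m{D}^c$), and \ref{BCGLOAss} for the condition~(*). Your detour through overgroup-closure is unnecessary (Thm.~2.2 of \cite{BCGLO2005} does not require it), and the back-and-forth between $\m{H}_0$ and $\m{H}$ can be dropped: the paper works directly with $\m{H}$.
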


\begin{proof}
This follows from \cite[Thm.~2.2]{BCGLO2005} and the properties we have proved before: The set $\m{H}$ is closed under conjugation in $\m{D}$ according to \ref{PrelI}(a). By \ref{DGenG}, $\m{D}$ is $\m{H}$-generated. Since $\m{H}\subseteq\m{D}^c$, every subgroup in $\m{H}$ is fully $\m{D}$-centralized. Hence, by \ref{MainLemma}, $\m{D}$ is $\m{H}$-saturated. The assumption (*) in \cite[Thm.~2.2]{BCGLO2005} is verified in \ref{BCGLOAss}.
\end{proof}

\section{The proof of Theorem~\ref{MainThm}}\label{S6}

From the results we proved in previous sections, it remains to show that $\m{D}=\F_0T$ is the unique saturated subsystem $\m{E}$ of $\F$ on $T$ with $O^p(\m{E})=O^p(\F_0)$. We do so below in two lemmas. However, before we start, we want to recall that, for an arbitrary saturated fusion system $\F$ on $S$, 
$$\mathfrak{hyp}(\F)=\<[P,O^p(\Aut_\F(P))]:P\leq S\>$$
and $O^p(\F)$ is the fusion system on $\mathfrak{hyp}(\F)$ generated by the automorphisms groups $O^p(\Aut_\F(P))$ with $P\leq \mathfrak{hyp}(\F)$. See Section~I.7 in \cite{Aschbacher/Kessar/Oliver:2011a} for details, in particular for the proof that $O^p(\F)$ is a normal subsystem of $\F$. Observe also that $O^p(O^p(\F))=O^p(\F)$.

\begin{lemma}\label{OPD}
 $O^p(\F_0T)=O^p(\F_0)$.
\end{lemma}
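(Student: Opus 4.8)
The plan is to show the two inclusions $O^p(\F_0)\subseteq O^p(\F_0 T)$ and $O^p(\F_0 T)\subseteq O^p(\F_0)$ by comparing the hyperfocal subgroups and the generating automorphism groups on both sides, using the key identity from \ref{Lemma4} that $O^p(\Aut_{\m D}(P_0))=O^p(\Aut_{\F_0}(P_0))$ for all $P_0\in\F_0$, together with \ref{Ac} which identifies $O^p(\Aut_{\m D}(P))$ with $\Ac(P)$ on the relevant subgroups. First I would compute $\mathfrak{hyp}(\m D)$. By Alperin's fusion theorem and \ref{DGenG}, every morphism of $\m D$ is a composite of restrictions of maps in $\Aut_T(P)$ and $\Ac(P)=O^p(\Aut_{\m D}(P))$ for $P\in\m H$ (or $\m G$); since the inner/$T$-induced part contributes nothing to the hyperfocal subgroup, $\mathfrak{hyp}(\m D)=\<[P,O^p(\Aut_{\m D}(P))]:P\in\m H\>$. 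For such $P$ one has $[P,\Ac(P)]\leq P\cap S_0=P_0$ by the very definition of $\Ac(P)$, and moreover $\Ac(P)|_{P_0}\subseteq O^p(\Aut_{\F_0}(P_0))$ again by definition of $\Ac$; hence $[P,\Ac(P)]=[P_0,\Ac(P)]\leq [P_0,O^p(\Aut_{\F_0}(P_0))]$, which is contained in $\mathfrak{hyp}(\F_0)$. This gives $\mathfrak{hyp}(\m D)\subseteq \mathfrak{hyp}(\F_0)$.

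For the reverse containment, note $\mathfrak{hyp}(\F_0)=\<[P_0,O^p(\Aut_{\F_0}(P_0))]:P_0\leq S_0\>$, and by \ref{Lemma4} each $O^p(\Aut_{\F_0}(P_0))=O^p(\Aut_{\m D}(P_0))$ consists of maps lying in $\m D$, so each generator $[P_0,O^p(\Aut_{\F_0}(P_0))]$ already lies in $\mathfrak{hyp}(\m D)$. Therefore $\mathfrak{hyp}(\m D)=\mathfrak{hyp}(\F_0)$; call this common subgroup $S_1\leq S_0$. Now $O^p(\m D)$ is the fusion system on $S_1$ generated by the groups $O^p(\Aut_{\m D}(P))$ with $P\leq S_1$, and $O^p(\F_0)$ is the fusion system on the same $S_1$ generated by the $O^p(\Aut_{\F_0}(P_0))$ with $P_0\leq S_1$. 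Since $S_1\leq S_0$, every such $P$ is a subgroup of $S_0$, so \ref{Lemma4} applies directly and gives $O^p(\Aut_{\m D}(P))=O^p(\Aut_{\F_0}(P))$ for every $P\leq S_1$. Hence the two fusion systems are generated on $S_1$ by literally the same automorphism groups, so $O^p(\m D)=O^p(\F_0)$.

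The only genuinely delicate point is the computation $\mathfrak{hyp}(\m D)\subseteq\mathfrak{hyp}(\F_0)$: one must be careful that in forming $\mathfrak{hyp}(\m D)=\<[P,O^p(\Aut_{\m D}(P))]:P\leq T\>$ it suffices to range over $P\in\m H$ rather than all $P\leq T$. This reduction is exactly what \ref{DGenG} (together with Alperin's fusion theorem, already invoked in \ref{ElPropN(P_0)} and \ref{Lemma4} via \cite[Thm.~A.10]{BrotoLeviOliver2003}) provides: an arbitrary $O^p(\Aut_{\m D}(P))$ decomposes through restrictions of the $\Ac$-groups on members of $\m H$, and commutators behave well under such decompositions. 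Everything else is a direct appeal to \ref{Lemma4}, \ref{Ac}, and the definition of $\Ac$. I expect the write-up to be short.
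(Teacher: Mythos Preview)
Your proof is correct and follows essentially the same route as the paper, but you have overcomplicated the computation of $\mathfrak{hyp}(\m{D})\subseteq\mathfrak{hyp}(\F_0)$. The first assertion of \ref{Ac} states that $O^p(\Aut_\m{D}(P))\leq\Ac(P)$ for \emph{every} $P\leq T$, not only for $P\in\m{H}$; and your own argument that $[P,\Ac(P)]=[P_0,\Ac(P)|_{P_0}]\leq[P_0,O^p(\Aut_{\F_0}(P_0))]\leq\mathfrak{hyp}(\F_0)$ uses only the definition of $\Ac(P)$ together with coprime action, and is valid for arbitrary $P\leq T$. Hence the detour through \ref{DGenG} and the reduction to $\m{H}$ is unnecessary, and the ``delicate point'' you flag simply evaporates. (Moreover, the claim that decomposing morphisms via \ref{DGenG} lets one range only over $\m{H}$ in the definition of $\mathfrak{hyp}(\m{D})$ would itself require justification: the $T$-conjugation pieces in such a decomposition contribute commutators in $T$ that do not obviously cancel.) The paper takes exactly the shorter route: a single appeal to \ref{Ac} for all $P\leq T$ yields $\mathfrak{hyp}(\m{D})=\mathfrak{hyp}(\F_0)$, and then \ref{Lemma4} finishes precisely as you describe.
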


\begin{proof}
Note $O^p(\F_0)\subseteq \F_0\subseteq \m{D}=\F_0T$. By \ref{Ac}, for any $P\leq T$, $O^p(\Aut_\m{D}(P))\leq\Ac(P)$, so $T_0:=\mathfrak{hyp}(\m{D})=\mathfrak{hyp}(\F_0)$. Moreover, by \ref{Lemma4}, $O^p(\Aut_\m{D}(P))=O^p(\Aut_{\F_0}(P))$ for $P\leq T_0\leq S_0$. Hence,
$$O^p(\m{D})=\<O^p(\Aut_\m{D}(P)):P\leq T_0\>_{T_0}=\<O^p(\Aut_{\F_0}(P)):P\leq T_0\>_{T_0}=O^p(\F_0).$$
\end{proof}

\begin{lemma}\label{OPE}
 If $\m{E}$ is a saturated subsystem of $\F$ on $T$ with $O^p(\m{E})=O^p(\F_0)$ then $\m{E}=\m{D}$.
\end{lemma}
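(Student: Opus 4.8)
The plan is to show that any saturated subsystem $\m{E}\leq\F$ on $T$ with $O^p(\m{E})=O^p(\F_0)$ satisfies $\m{E}=\m{D}$, by first proving $\m{D}\subseteq\m{E}$ via the generation statement \ref{DGenG}, and then concluding equality by a comparison of ``sizes'' (e.g.\ comparing $O^p$-data or counting morphisms), using that both systems are saturated on the \emph{same} $p$-group $T$.

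First I would establish $\m{D}\subseteq\m{E}$. By \ref{DGenG}, $\m{D}=\<\Ac(P):P\in\m{H}\>_T$, so it suffices to show $\Ac(P)\leq\Aut_\m{E}(P)$ for every $P\in\m{H}$, i.e.\ every $P\leq T$ with $P_0\in\F_0^c$. Fix such a $P$ and a $p'$-element $\vphi\in\Ac(P)$; thus $[P,\vphi]\leq P_0$ and $\vphi|_{P_0}\in\Aut_{\F_0}(P_0)=\Aut_{O^p(\F_0)}(P_0)$ up to a $p$-part — more precisely, since $\vphi|_{P_0}$ is a $p'$-element it lies in $O^p(\Aut_{\F_0}(P_0))=O^p(\Aut_{\m{E}}(P_0))$ (using $O^p(\F_0)=O^p(\m{E})$ and the description of $O^p$ recalled before \ref{OPD}, noting $P_0\leq S_0\cap T$ and $\mathfrak{hyp}(\m{E})=\mathfrak{hyp}(O^p(\m{E}))=\mathfrak{hyp}(\F_0)\leq S_0$). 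The key point is then that $\vphi$ is \emph{detected} on $P_0$ in the sense that $\vphi$ is determined up to $\Aut_T(P)$ and $\C$-terms by $\vphi|_{P_0}$: since $[P,\vphi]\leq P_0$, the automorphism $\vphi$ normalizes $P_0$ and acts trivially on $P/P_0$, so by coprime action $P=P_0\C_P(\vphi)$ and the argument pattern of \ref{Prop1Str} and \ref{AcleqAutN} applies. Concretely, using that $\m{E}$ is saturated I would take a suitable $\m{E}$-conjugate (or local subsystem $\N_\m{E}^{K}(P_0)$) in which $\vphi|_{P_0}$ extends, mirror the argument in \ref{AcleqAutN} (replacing $\m{D}$ by $\m{E}$), and conclude $\vphi\in\Aut_\m{E}(P)$. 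Since this holds for all $p'$-elements and $\Ac(P)=\<p'\text{-elements}\>$, we get $\Ac(P)\leq\Aut_\m{E}(P)$, hence $\m{D}\subseteq\m{E}$.

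Next, for the reverse inclusion $\m{E}\subseteq\m{D}$ I would invoke that $\m{D}$ is saturated (Proposition~\ref{DSat}) and apply Alperin's fusion theorem to $\m{E}$: $\m{E}=\<O^p(\Aut_\m{E}(P))\Aut_T(P):P\in\m{E}^{frc}\>_T$. For $P\in\m{E}^{frc}$, $\Aut_T(P)\leq\Aut_\m{D}(P)$ trivially, so it remains to show $O^p(\Aut_\m{E}(P))\leq\Aut_\m{D}(P)$. By the same kind of argument as above — or more cleanly by observing $O^p(\Aut_\m{E}(P))\leq O^p(\Aut_\F(P))$ lands inside the ``product data'' — one shows any $p'$-element of $\Aut_\m{E}(P)$ restricts to an $O^p(\F_0)$-automorphism of $P\cap S_0$ and acts trivially on $P/(P\cap S_0)$ (because $\mathfrak{hyp}(\m{E})\leq S_0$ forces $\m{E}$ to induce only $p$-power automorphisms on $PS_0/S_0$), hence lies in $\Ac(P)\leq\Aut_\m{D}(P)$ once $P\cap S_0\in\F_0^c$ — and for general $P$ one reduces to this case by conjugating $P$ within $\m{E}\subseteq\F$ to make $P\cap S_0$ centric, using \ref{BCGLOAssHelp}/\ref{wellplacedNormConj} as in the proof of \ref{DGenG}. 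Combining the two inclusions gives $\m{E}=\m{D}$.

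The main obstacle I expect is the first inclusion $\Ac(P)\leq\Aut_\m{E}(P)$: the definition of $\Ac(P)$ is intrinsic to $\F$ and refers to $\Aut_{\F_0}(P_0)$, whereas $\m{E}$ only knows $O^p(\F_0)$, so one must genuinely exploit that $\vphi$ is a $p'$-element to replace $\Aut_{\F_0}(P_0)$ by $O^p(\Aut_{\F_0}(P_0))\leq\Aut_\m{E}(P_0)$, and then propagate this from $P_0$ up to $P$. That propagation is exactly the content of Lemmas~\ref{Prop1Str} and \ref{AcleqAutN} but carried out with $\m{E}$ in place of $\m{D}$; the subtlety is that $\m{E}$ need not a priori contain the local subsystems $\N_\F^K(P_0)$, so one has to work with $\N_\m{E}^{K}(P_0)$ directly and check it is saturated and has the right normal subsystem — this is where the argument is most delicate and where I would spend most of the effort, likely following the structure of \ref{AcleqAutN}'s proof closely.
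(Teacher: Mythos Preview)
Your overall strategy---show both inclusions via Alperin and the generators $\Ac(P)$---is the right one, but you are missing two organizing ideas that make the paper's argument short, and one of your proposed reductions is incorrect.

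First, the paper begins with a reduction to the case $O^p(\F_0)=\F_0$: if the lemma holds in that case then, applying it with $\m{E}=\m{D}$ and \ref{OPD}, one gets $\m{D}=O^p(\F_0)T$, and then the general case follows. After this reduction $\F_0=O^p(\m{E})\unlhd\m{E}$, so $\F_0\subseteq\m{E}$, $\mathfrak{hyp}(\m{E})=S_0$, and one can invoke \cite[7.18]{AschbacherGeneralized} directly to get $P_0\in\F_0^c$ for every $P\in\m{E}^{frc}$. Your proposed alternative for $\m{E}\subseteq\m{D}$, namely ``for general $P$ one reduces to this case by conjugating $P$ within $\m{E}\subseteq\F$ to make $P\cap S_0$ centric, using \ref{BCGLOAssHelp}/\ref{wellplacedNormConj}'', does not work: one cannot in general conjugate an arbitrary $P$ so that $P_0$ becomes $\F_0$-centric, and those lemmas do not assert this. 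The point is precisely that for $P\in\m{E}^{frc}$ this is automatic once $\F_0\unlhd\m{E}$.

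Second, the paper proves $\m{E}\subseteq\m{D}$ \emph{first}, and only then $\m{D}\subseteq\m{E}$. This order is essential for the easy argument you are looking for: once $\m{E}\subseteq\m{D}$, any $Q\in\m{D}^c$ is also $\m{E}$-centric, hence fully $\m{E}$-centralized, and for $Q\in\m{D}^{fc}$ with $Q_0\in\F_0^c$ one can now extend $\phi|_{Q_0}\in\Aut_{\F_0}(Q_0)\leq\Aut_\m{E}(Q_0)$ to some $\psi\in O^p(\Aut_\m{E}(Q))$ by the extension axiom in $\m{E}$ (with \ref{littleLemma}). Then $\phi\psi^{-1}$ centralizes $Q_0$ and acts trivially on $Q/Q_0$, so $\phi\psi^{-1}\in O_p(\Ac(Q))\leq\Aut_T(Q)\leq\Aut_\m{E}(Q)$, the penultimate inclusion using that $\m{D}$ is saturated and $Q\in\m{D}^f$. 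This replaces your proposed ``delicate'' adaptation of \ref{AcleqAutN} entirely; there is no need to work with $K$-normalizers $\N_\m{E}^K(P_0)$. Note also that the paper only needs $\Ac(Q)\leq\Aut_\m{E}(Q)$ for $Q\in\m{D}^{fc}$ with $Q_0\in\F_0^c$ (via Alperin, \cite[7.18]{AschbacherGeneralized}, and \ref{Ac}), not for all $Q\in\m{H}$ as you propose.
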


\begin{proof}
Suppose the claim is true in the case $O^p(\F_0)=\F_0$. Then applying this property with $\m{E}=\m{D}$, we obtain $\m{D}=O^p(\F_0)T$, where we use \ref{OPD} and the fact that $\m{D}$ is saturated as proved in Section \ref{S5}. Hence, we are indeed reduced to the case that $O^p(\F_0)=\F_0$ and in particular, $\mathfrak{hyp}(\m{E})=\mathfrak{hyp}(\F_0)=S_0$. As $\F_0=O^p(\F_0)=O^p(\m{E})\unlhd \m{E}$, it follows from  \cite[7.18]{AschbacherGeneralized} that $P\cap S_0\in\F_0^c$ for any $P\in \m{E}^{frc}$. Moreover, $[P,O^p(\Aut_\m{E}(P))]\leq P\cap \mathfrak{hyp}(\m{E})=P\cap S_0$ and, for any $p^\prime$-element $\vphi\in \Aut_\m{E}(P)$, $\vphi|_{P_0}$ is a morphism in $O^p(\m{E})=\F_0$. Hence, $O^p(\Aut_\m{E}(P))\leq \Ac(P)$ and thus, by Alperin's Fusion Theorem \cite[Thm.~A.10]{BrotoLeviOliver2003},
$$\m{E}=\<O^p(\Aut_\m{E}(P)):P\in \m{E}^{frc}\>_T\subseteq \F_0T.$$
Alperin's Fusion Theorem together with \cite[7.18]{AschbacherGeneralized} and the fact that $\m{D}$ is saturated, gives also $\m{D}=\<O^p(\Aut_\m{D}(P)):P\in\m{D}^{fc},\;P_0\in\F_0^c\>_T$. So, by \ref{Ac}, 
it is sufficient to prove $\Ac(Q)\leq \Aut_\m{E}(Q)$ for $Q\in\m{D}^{fc}$ with $Q_0\in\F_0^c$. Let $\phi\in \Ac(Q)$ be a $p^\prime$-element. Then $\phi|_{Q_0}$ is a morphism in $\F_0$ and thus in $\m{E}$. As $\m{E}$ is saturated and $Q\in\m{D}^c$, it follows from \ref{littleLemma} and the extension axiom that $\phi|_{Q_0}$ extends to an element $\psi\in O^p(\Aut_{\m{E}}(Q))$. Then $[Q,\psi]\leq Q\cap \mathfrak{hyp}(\m{E})=Q_0$ and thus $\psi\in\Ac(Q)$. As $\m{D}$ is saturated and $Q\in\m{D}^f$, $\Aut_T(Q)\in \Syl_p(\Aut_\m{D}(Q))$. Hence, using \cite[8.2.2(b)]{KurzweilStellmacher2004}, we get $\phi\psi^{-1}\in \C_{\Ac(Q)}(Q_0)\leq O_p(\Ac(Q))\leq \Aut_T(Q)\leq \Aut_\m{E}(Q)$ and thus $\phi\in \Aut_\m{E}(Q)$. This proves the assertion.   
\end{proof}

\begin{proof}[{Proof of Theorem~\ref{MainThm}}]
 As proved in Section \ref{S5}, $\m{D}=\F_0T$ is saturated. By \ref{OPD} and \ref{OPE}, $\m{D}$ is the unique saturated subsystem $\m{E}$ of $\F$ on $T$ with $O^p(\m{E})=O^p(\F_0)$. Furthermore, \ref{Ac} gives $\Ac(P)=O^p(\Aut_\m{D}(P))$ for $P\leq T$ with $P_0\in\F_0^c$. This proves the theorem.
\end{proof}

\section{Final Remarks and Examples}\label{S7}

\subsection{Connections to factor systems}\label{factor}
We will explore here how the fusion system $\F_0T$ arises as a saturated preimage of certain subsystems of factor systems of $\F$.  As a basic fact, in a finite group $G$ with a normal subgroup $N$, for any subgroup $H$ of $G$, the product $NH$ is the largest preimage of the image of $H$ in $G/N$. We would like to establish similar properties of products in fusion systems.
Recall that, for any strongly closed subgroup $R$, the factor system $\F/R$ is defined; moreover, the strongly closed subgroups turn out to be precisely the kernels of morphisms between fusion systems; see e.g. \cite[Section~II.5]{Aschbacher/Kessar/Oliver:2011a} for the precise definition of $\F/R$ and more information. 
From now on, for any subsystem $\m{E}$ of $\F$ on a subgroup $E\leq S$, we write $\m{E}/R$ for the image of $\m{E}$ in $\F/R$, i.e. for the subsystem of $\F/R$ on $ER/R$ generated by the maps which are induced by morphisms from $\m{E}$. (With this notation we do not mean to imply in any way that $R$ is contained in $\m{E}$.) 
For a normal subsystem $\F_0$ of $\F$ on $S_0$, one defines the factor system $\F/\F_0$ to be $\F/S_0$. We set $\m{E}/\F_0:=\m{E}/S_0$. (Again, this notation doesn't mean that $S_0$ or $\F_0$ is contained in $\m{E}$.)

\smallskip

From the construction of $\F_0T$ it follows easily that $(\F_0T)/\F_0=\F_T(T)/\F_0$, so $\F_0T$ is a saturated preimage of $\F_T(T)/\F_0$. However, the following example shows that $\F_0T$ is neither the unique saturated preimage on $T$, nor the largest saturated preimage.

\begin{ex}\label{ExFactor1}
 Let $G_1$ and $G_2$ be two finite groups which both have a normal Sylow $p$-subgroup. Assume for at least one $i=1,2$, $G_i\neq O_p(G_i)\C_{G_i}(O_p(G_i))$. Set $G:=G_1\times G_2$ and let $T\in \Syl_p(G)$. Note that $T\unlhd G$ and thus, by \cite[Prop.~I.6.2]{Aschbacher/Kessar/Oliver:2011a}, $\F_0:=\F_T(T)\unlhd \F:=\F_T(G)$. Moreover, $\F/\F_0=\F_T(T)/\F_0$ is the fusion system on the trivial group. So $\F$ is the largest preimage of $\F_T(T)/\F_0$, but $\F_0T=\F_0$ is a proper subsystem of $\F$.
\end{ex}

We now turn to factor systems modulo an arbitrary strongly closed subgroup. Recall that, for any subgroup $R$ of $S$, we defined $\F_0R:=\F_0(RS_0)$.

\begin{proposition}
Let $R$ be a strongly closed subgroup (not necessarily containing $S_0$). Then $\F_0R/R=\F_0/R$.
\end{proposition}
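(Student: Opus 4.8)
### Plan for the proof

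The plan is to prove the two inclusions $\F_0 R/R \subseteq \F_0/R$ and $\F_0/R \subseteq \F_0 R/R$ separately, reducing both to comparing generators. Recall $\F_0 R = \F_0(RS_0)$, so $\F_0 R$ is a fusion system on $RS_0$ and $\F_0 R/R$ lives on $RS_0/R \cong S_0/(R\cap S_0)$; likewise $\F_0/R$ lives on $S_0 R/R$, the same group, so at least the underlying $p$-groups match. For the inclusion $\F_0 R/R \subseteq \F_0/R$, I would use the description of $\F_0 R$ from the Definition: $\F_0 R = \<\Ac_{\F,\F_0}(P) : P \leq RS_0,\ P\cap S_0 \in \F_0^c\>_{RS_0}$, together with the fact (Lemma~\ref{DGenG}, applied with $T = RS_0$) that $\F_0 R = \<\Ac(P) : P \in \m{H}\>_{RS_0}$. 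So it suffices to show that the image in $\F/R$ of each morphism in $\Ac(P)$, for $P \leq RS_0$ with $P_0 := P\cap S_0 \in \F_0^c$, is already a morphism in $\F_0/R$. Given such a $p'$-element $\vphi \in \Aut_\F(P)$ with $[P,\vphi] \leq P_0$ and $\vphi|_{P_0} \in \Aut_{\F_0}(P_0)$: since $[P,\vphi]\leq P_0 \leq S_0 \leq S_0R$, the induced map $\bar\vphi$ on $PR/R$ agrees with the induced map of $\vphi|_{P_0}$ on $P_0R/R = PR/R$ (the latter equality because $P \leq RS_0$ forces $P_0 R = PR$). Hence $\bar\vphi$ is the image of a morphism of $\F_0$, i.e. a morphism of $\F_0/R$, as desired.

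For the reverse inclusion $\F_0/R \subseteq \F_0 R/R$, I would use that $\F_0/R$ is generated (as a fusion system on $S_0R/R$) by the images of morphisms of $\F_0$. So it suffices to take $\alpha \in \Hom_{\F_0}(Q,S_0)$ for $Q \leq S_0$ and show its image $\bar\alpha \in \Hom_{\F/R}(QR/R, S_0R/R)$ is a morphism of $\F_0 R/R$. But $Q \leq S_0 \leq RS_0$ and $\alpha$ is itself a morphism of $\F_0 = \F_0(RS_0)$-wait, more carefully: $\alpha \in \Hom_{\F_0}(Q,S_0) \subseteq \Hom_{\F_0 R}(Q, RS_0)$ since $\F_0 \subseteq \F_0 R$ (indeed $\F_0 = \F_0 S_0 \subseteq \F_0(RS_0) = \F_0 R$ directly from the Definition, as $\Ac_{\F,\F_0}(P) \supseteq O^p(\Aut_{\F_0}(P))$ for $P_0 \in \F_0^c$ and Alperin). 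Therefore $\bar\alpha$ is the image of a morphism of $\F_0 R$, hence lies in $\F_0 R/R$. Combining the two inclusions gives $\F_0 R/R = \F_0/R$.

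The main obstacle I anticipate is bookkeeping about underlying groups and about the precise meaning of ``image in $\F/R$'' for a fusion system that does not contain $R$ — in particular, verifying that the generating set of $\F_0 R/R$ really is the set of images of the chosen generators of $\F_0 R$, and dually for $\F_0/R$, which requires knowing that taking images commutes with generation of fusion systems (morphisms induced by a generating set of morphisms induce a generating set of the image). This is essentially formal from the definition of $\F/R$ via \cite[Section~II.5]{Aschbacher/Kessar/Oliver:2011a}, but it is the step that needs care. A secondary point is the identity $P_0 R = PR$ for $P \leq RS_0$, which follows from $P \leq RS_0$ and a standard modular-law argument: $PR = (PR) \cap (RS_0) = R(P \cap RS_0)$... actually more simply, $PR \leq RS_0$ and $PR/R \leq RS_0/R$, and since $RS_0/R$ is the image of $S_0$, every element of $PR/R$ is represented by an element of $S_0$, i.e. $PR = R(P \cap \text{(something in }S_0\text{)})$; the cleanest route is just $PR \cap S_0 R = (PR \cap S_0)R$ by Dedekind, and one checks $P \leq (PR\cap S_0)R = P_0 R$ after intersecting appropriately. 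I would state this as a one-line lemma and move on. Everything else is a direct unwinding of the Definition of $\F_0 T$ and of factor systems.
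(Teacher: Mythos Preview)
Your easy inclusion $\F_0/R\subseteq\F_0R/R$ is fine and matches the paper. The gap is in the other direction, specifically in the claimed identity $P_0R=PR$ for $P\leq RS_0$. This is simply false, and your modular-law sketch does not prove it. For instance, take $S_0=D_8=\langle a,b\mid a^4=b^2=1,\,bab^{-1}=a^{-1}\rangle$, let $R=\langle c\rangle$ have order $2$ with $RS_0=R\times S_0$, and set $P=\langle ca,b\rangle$. Then $|P|=8$, $P_0=P\cap S_0=\langle a^2,b\rangle$ is $\F_0$-centric (for $\F_0=\F_{S_0}(S_0)$), yet $P_0R=\langle a^2,b,c\rangle$ has order $8$ while $PR=RS_0$ has order $16$. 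Dedekind's law would give $P=R(P\cap S_0)$ only if $R\leq P$, which you have no reason to assume; your attempted manipulations $PR\cap S_0R=(PR\cap S_0)R$ etc.\ do not recover the statement you need.

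The paper fixes exactly this point by \emph{first extending to $PR$}. Given an arbitrary $\vphi\in\Hom_{\F_0R}(P,S_0R)$, it invokes Theorem~\ref{MainThm} to know that $\F_0R$ is saturated, and then \cite[Thm.~II.5.9]{Aschbacher/Kessar/Oliver:2011a} to replace $\vphi$ by some $\psi\in\Hom_{\F_0R}(PR,S_0R)$ with the same image in $\F/R$. Now $R\leq P$, and Dedekind's law legitimately gives $P=R(P\cap S_0)$, so $\overline{P}=\overline{P\cap S_0}$. Finally, Remark~\ref{Decompose} (not the generator description via $\Ac(P)$) is applied to $\psi|_{P\cap S_0}$ to write it as $c_t\phi$ with $t\in RS_0$ and $\phi\in\F_0$; writing $t=rs$ with $r\in R$, $s\in S_0$ absorbs $c_s$ into $\phi$, so $\overline{\psi}=\overline{\phi}\in\F_0/R$. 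Thus the step you flagged as a ``one-line lemma'' is precisely where the real content lies, and it needs saturation of $\F_0R$ rather than a group-theoretic identity.
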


\begin{proof}
 As $\F_0\subseteq \F_0R$, we have $\F_0/R\subseteq \F_0R/R$. Set $\ov{S}=S/R$ and $\ov{\F}=\F/R$. Accordingly, for any morphism $\alpha\in \F$, write $\ov{\alpha}$ for the image of $\alpha$ in $\ov{\F}$. Let $P\leq RS_0$ and $\vphi\in\Hom_{\F_0R}(P,S_0R)$. We need to show that $\ov{\vphi}$ is a morphism in $\ov{\F_0}=\F_0/R$. By Theorem~\ref{MainThm}, $\F_0R$ is saturated; so it follows from \cite[Thm.~II.5.9]{Aschbacher/Kessar/Oliver:2011a} that there exists $\psi\in\Hom_{\F_0R}(PR,S_0R)$ such that $\ov{\psi}=\ov{\vphi}$. Hence, replacing $(P,\vphi)$ by $(PR,\psi)$, we may assume $R\leq P$. Then $P=R(P\cap S_0)$ and so $\ov{P}=\ov{P\cap S_0}$. Moreover, by \ref{Decompose}, $\vphi_0:=\vphi|_{P\cap S_0}=c_r\phi$ for some $r\in R$ and $\phi\in\Hom_{\F_0}((P\cap S_0)^r,S_0)$. Hence, $\ov{\vphi}=\ov{\vphi_0}=\ov{\phi}\in \ov{\F_0}$ as required.
\end{proof}

Again, $\F_0R$ is not in any way unique or maximal as a saturated preimage of $\F_0/R$ on $S_0R$, as the following example shows.

\begin{ex}\label{ExFactor2}
 We continue to use the notation introduced in Example \ref{ExFactor1}. Take $R=T$ as a strongly closed subgroup. Then $\F_0/R=\F/R$ is the fusion system on the trivial group. However, as remarked before, $\F$ is the largest saturated preimage of $\F_0/R$ in $\F$, and $\F_0=\F_0T$ is a proper subsystem of $\F$. 
\end{ex}

\subsection{Products of $O^p(\F)$ with $p$-subgroups}
There is the following generalization of $O^p(\F)$ in the literature: For any $T\leq S$ which contains $\mathfrak{hyp}(\F)$, there is a saturated fusion subsystem
$$\F_T=\<O^p(\Aut_\F(P)):P\leq T\>_T$$
on $T$, which is normal in $\F$ provided $T\unlhd S$; see \cite[Thm.~I.7.4]{Aschbacher/Kessar/Oliver:2011a} for details. It is easy to see that $O^p(\F_T)=O^p(\F)$ and thus, by the uniqueness statement in Theorem~\ref{MainThm}, $\F_T=O^p(\F)T$. In particular, $O^p(\F)T$ is normal in $\F$ if $T\unlhd S$.

\subsection{Uniqueness of the Product} For the uniqueness statement in Theorem~\ref{MainThm} it is indeed essential to consider products \textit{inside the same fusion system} $\F$, as the following example shows:

\begin{ex}\label{7.4}
 We construct two saturated fusion systems $\F$ and $\m{G}$ on the same $p$-group such that $O^p(\F)=O^p(\m{G})$ and $\F\neq \m{G}$: 
Let $q\geq 3$ be a power of $p$, $1\neq\lambda\in GF(q)^\times$, and $S$ a finite dimensional vector space over $GF(q)$ of dimension at least $2$. Fix a non-trivial proper subspace $U$ of $S$ and complements $W_1,W_2$ of $U$ in $S$ with $W_1\neq W_2$. Define $\alpha_1,\alpha_2\in GL(S)$ via $\alpha_i|_U=\lambda\cdot \id_U$ and $\alpha_i|_{W_i}=\id_{W_i}$ for $i=1,2$. Set $G_i:=S\rtimes \<\alpha_i\>$ for $i=1,2$, $\F=\F_S(G_1)$ and $\m{G}=\F_S(G_2)$. Then for $\alpha:=\alpha_1|_U=\alpha_2|_U$,  $O^p(\F)=\F_U(U\rtimes\<\alpha\>)=O^p(\m{G})$. However, $\F\neq\m{G}$ as $W_1\neq W_2$. In particular, setting $\F_0:=O^p(\F)$, we have $(\F_0S)_\F\neq (\F_0S)_{\m{G}}$. 
\end{ex}

\subsection{The definition of $\F_0T$}

In our explicit description of $\F_0T$, one considers only the subgroups $P\leq T$ with $P\cap S_0\in\F_0^c$. This might seem a bit artificial on the first view. However, for an arbitrary subgroups $P\leq T$, it appears that there is no good way of describing $O^p(\Aut_{\F_0T}(P))$. In \ref{Ac} we prove that
$$O^p(\Aut_{\F_0T}(P))\leq \Ac(P),$$
but the converse inclusion does not necessarily hold, as we show in the next example.

\begin{ex}\label{7.5}
Let $p$ be a prime and $q\geq 3$ a power of $p$. Take $S$ to be a finite-dimensional vector space over $GF(q)$ which is the direct sum $S=U\oplus V\oplus W$ of three non-trivial subspaces $U,V,W$. Set $S_0:=U\oplus V$ and let $W^\prime\neq W$ be a complement of $V$ in $V\oplus W$. Let $\lambda\in GF(q)^\times$ and define $\alpha,\beta\in GL(S)$ via
\begin{eqnarray*}
 \alpha|_U&=&\lambda\cdot \id_U\mbox{ and }\alpha|_{V\oplus W}=\id_{V\oplus W},\\
 \beta|_{S_0}&=&\lambda\cdot \id_{S_0}\mbox{ and }\beta|_{W^\prime}=\id_{W^\prime}.
\end{eqnarray*}
Set
$$G:=S\rtimes \<\alpha,\beta\>\mbox{ and }N:=\<S_0,\beta\>.$$
Note that $S_0\unlhd G$, and that $\alpha$ and $\beta$ commute. 
Since $[S,\beta]=S_0$, this implies $N\unlhd G$. In particular, $\F_0:=\F_{S_0}(N)\unlhd \F:=\F_S(G)$. Set $P:=U\oplus W$. Then $P\cap S_0=U$, $[P,\alpha]=U$ and $\alpha|_U=\beta|_U\in\Aut_{\F_0}(U)$. Clearly, the order of $\alpha$ divides $q-1$, so $\alpha$ is a $p^\prime$-element. Hence,
$$\alpha|_P\in \Ac_{\F,\F_0}(P).$$
As $W\neq W^\prime$, no non-trivial element of $\<\beta\>$ normalizes $P$ and thus $\N_{NS}(P)=S$. Hence, $\Aut_{\F_0S}(P)=\Aut_{NS}(P)=1$ and, in particular, $\alpha|_P\not\in \Aut_{\F_0S}(P)$. This shows
$$\Ac_{\F,\F_0}(P)\not\leq \Aut_{\F_0S}(P).$$
\end{ex}

\bibliographystyle{amsalpha}
\bibliography{mybib}

\newcommand{\etalchar}[1]{$^{#1}$}
\providecommand{\bysame}{\leavevmode\hbox to3em{\hrulefill}\thinspace}
\providecommand{\MR}{\relax\ifhmode\unskip\space\fi MR }
\providecommand{\MRhref}[2]{%
  \href{http://www.ams.org/mathscinet-getitem?mr=#1}{#2}
}
\providecommand{\href}[2]{#2}
\begin{thebibliography}{BCG{\etalchar{+}}07}

\bibitem[AKO11]{Aschbacher/Kessar/Oliver:2011a}
M.~Aschbacher, R.~Kessar, and B.~Oliver, \emph{{Fusion systems in algebra and
  topology}}, vol. 391, 2011.

\bibitem[Asc08]{AschbacherNormal}
Michael Aschbacher, \emph{Normal subsystems of fusion systems}, Proc. Lond.
  Math. Soc. (3) \textbf{97} (2008), no.~1, 239--271. \MR{2434097
  (2009e:20044)}

\bibitem[Asc10]{AschbacherGeneration}
\bysame, \emph{Generation of fusion systems of characteristic 2-type}, Invent.
  Math. \textbf{180} (2010), no.~2, 225--299. \MR{2609243}

\bibitem[Asc11]{AschbacherGeneralized}
\bysame, \emph{The generalized {F}itting subsystem of a fusion system}, Mem.
  Amer. Math. Soc. \textbf{209} (2011), no.~986, v++110pp.

\bibitem[BCG{\etalchar{+}}05]{BCGLO2005}
Carles Broto, Nat{\`a}lia Castellana, Jesper Grodal, Ran Levi, and Bob Oliver,
  \emph{Subgroup families controlling {$p$}-local finite groups}, Proc. London
  Math. Soc. (3) \textbf{91} (2005), no.~2, 325--354.

\bibitem[BCG{\etalchar{+}}07]{BCGLO2007}
C.~Broto, N.~Castellana, J.~Grodal, R.~Levi, and B.~Oliver, \emph{Extensions of
  {$p$}-local finite groups}, Trans. Amer. Math. Soc. \textbf{359} (2007),
  no.~8, 3791--3858 (electronic).

\bibitem[BLO03a]{BrotoLeviOliver2003b}
Carles Broto, Ran Levi, and Bob Oliver, \emph{Homotopy equivalences of
  {$p$}-completed classifying spaces of finite groups}, Invent. Math.
  \textbf{151} (2003), no.~3, 611--664.

\bibitem[BLO03b]{BrotoLeviOliver2003}
\bysame, \emph{The homotopy theory of fusion systems}, J. Amer. Math. Soc.
  \textbf{16} (2003), no.~4, 779--856 (electronic).

\bibitem[KS04]{KurzweilStellmacher2004}
Hans Kurzweil and Bernd Stellmacher, \emph{The theory of finite groups},
  Universitext, Springer-Verlag, New York, 2004, An introduction, Translated
  from the 1998 German original. \MR{2014408 (2004h:20001)}

\bibitem[Pui]{PuigBookFrobenius}
Lluis Puig, \emph{Frobenius categories versus brauer blocks}.

\bibitem[Pui00]{Puig2000h}
\bysame, \emph{The hyperfocal subalgebra of a block}, Invent. Math.
  \textbf{141} (2000), no.~2, 365--397.

\bibitem[Pui06]{Puig2006}
\bysame, \emph{Frobenius categories}, J. Algebra \textbf{303} (2006), no.~1,
  309--357. \MR{2253665 (2007j:20011)}

\end{thebibliography}

\end{document}